\def \N {\mathbb{N}}
\def \R {\mathbb{R}}
\def \Ns {\mathscr{N}_{s}}
\def \Q {\mathcal Q}
\theoremstyle{definition}
\newtheorem{definition}{Definition}[section]
\theoremstyle{plain}
\newtheorem{theorem}[definition]{Theorem}
\newtheorem{proposition}[definition]{Proposition}
\newtheorem{lemma}[definition]{Lemma}
\newtheorem{corollary}[definition]{Corollary}
\numberwithin{equation}{section}
\renewcommand{\epsilon}{\varepsilon}
\renewcommand{\leq}{\leqslant}
\renewcommand{\le}{\leqslant}
\renewcommand{\geq}{\geqslant}
\renewcommand{\ge}{\geqslant}
 \title[Superposition of fractional Laplacians]{The Neumann condition\\
for the superposition of fractional Laplacians}
\author[S. Dipierro]{Serena Dipierro}
\author[E. Proietti Lippi]{Edoardo Proietti Lippi}
\author[C. Sportelli]{Caterina Sportelli}
\author[E. Valdinoci]{Enrico Valdinoci}
 \address{Department of Mathematics and Statistics
 \newline\indent University of Western Australia \newline\indent
 35 Stirling Highway, WA 6009 Crawley, Australia.\newline
 \newline\indent
\tt serena.dipierro@uwa.edu.au \newline\indent
\tt edoardo.proiettilippi@uwa.edu.au \newline\indent
\tt caterina.sportelli@uwa.edu.au \newline\indent
\tt enrico.valdinoci@uwa.edu.au}
\date{}
\begin{document}

 \maketitle

\begin{abstract}
We present a new functional setting for
Neumann conditions related to the
superposition of (possibly infinitely many) fractional Laplace operators.

We will introduce some bespoke
functional framework and present minimization properties, 
existence and uniqueness results, asymptotic formulas, spectral analyses, rigidity results,
integration by parts formulas, superpositions of fractional perimeters, as well as a study of the associated heat equation.
\end{abstract}
 
\tableofcontents{}
 
\section{Introduction} 

\subsection{Superposition of fractional Laplacians and Neumann conditions}

Nonlocal operators have significantly impacted contemporary research, playing a crucial role in both theoretical studies and practical applications across various fields of applied sciences. In this trend, the analysis of models arising from
the superposition of operators of different orders is also receiving increasing attention also
to improve our understanding of complex systems with long-range interactions acting
on different scales,
to allow for a more accurate and realistic representation of several physical phenomena
and to possibly increase computational power in numerical simulations.

In this paper, a novel functional framework for Neumann conditions is introduced, specifically tailored to address the superposition of fractional Laplace operators, potentially involving an infinite (even uncountable) number of such operators. The results obtained include, among the others, minimization properties, existence and uniqueness theorems, asymptotic behavior, spectral analyses, continuity results, integration by parts formulas and divergence theorems, as well as a study of the associated heat equation.

The mathematical setting that we can consider with our methods is  very broad but, for the sake of concreteness, we focus on the following specific framework.


Let $\mu$ be a nonnegative and nontrivial finite (Borel) measure over~$(0, 1)$ and
let~$\alpha \ge 0$. The operator that we address here deals with the possible coexistence of a Laplacian and a possibly continuous superposition of fractional Laplacians, namely
\begin{equation}\label{ELLE}
L_{\alpha, \mu} (u):= -\alpha\Delta u +\int_{(0, 1)} (-\Delta)^s u\, d\mu(s).
\end{equation}
As customary, the notation $(-\Delta)^s$ stands for the fractional Laplacian, defined, for all $s\in (0, 1)$, as 
\begin{equation}\label{deflaplacianofrazionario}
(- \Delta)^s\, u(x) = c_{N,s}\int_{\R^N} \frac{2u(x) - u(x+y)-u(x-y)}{|y|^{N+2s}}\, dy.
\end{equation}
The positive normalizing constant~$c_{N,s}$ is chosen in such a way to provide
consistent limits as~$s\nearrow1$ and as~$s\searrow0$, namely
\[
\lim_{s\nearrow1}(-\Delta)^su=(-\Delta)^1u=-\Delta u
\qquad{\mbox{and}}\qquad
\lim_{s\searrow0}(-\Delta)^s u=(-\Delta)^0u=u.
\]
The explicit value of~$c_{N,s}$ does not play a major role in our paper and is given by
$$ c_{N,s} := -\frac{2^{2s-1}\Gamma(\frac{N+2s}2)}{\pi^{N/2}\Gamma(-s)} $$
see e.g.~\cite[Definition~1.1]{2024arXiv241118238A}. 

In the nonlocal context, at least three notions of fractional normal derivatives have been
considered. One of them is of ``geometric'' type, and deals with the incremental quotient in the normal direction related to the half of the order of the operator: this notion is typically
very useful for boundary regularity, Pohozaev-type identities, Hopf-type results, moving plane methods, symmetry results, etc. (see in particular~\cite{MR3168912, MR3211861, MR3177650, MR3395749, MR3937999}).
This notion gives rise
to overdetermined problems when combined to Dirichlet external data,
but when one allows suitable larger function spaces, the problem with this type of Neumann condition becomes well-posed, see~\cite{MR3293447, MR3920522}.

A second notion of fractional boundary derivative arises from spectral considerations,
in which, roughly speaking, one considers, in lieu of~\eqref{deflaplacianofrazionario}, a different
operator of fractional order obtained by eigenfunction expansion and multiplication by powers of eigenvalues (the Neumann condition being thus encoded into the choice of eigenfunctions, see~\cite{MR2754080, MR3082317, MR3385190}).
The structure of this spectral operator is however quite different from that
of the integral fractional Laplacian in~\eqref{deflaplacianofrazionario}, see in particular~\cite[Section~2.3]{MR3967804}
for similarities and differences.

Another notion of fractional normal derivative has been put forth in~\cite{MR3651008}
and it is nicely compatible with energy methods and variational techniques; also
it has a natural probabilistic interpretation which makes it suitable for applications in mathematical biology (see~\cite{MR4249816, MR4651677}). This latter notion of fractional derivative
related to~$(-\Delta)^s$ takes the form
\begin{equation}\label{defNsu}
\Ns u(x):=c_{N,s}\int_\Omega\frac{u(x)-u(y)}{|x-y|^{N+2s}}\,dy.
\end{equation}

While the first notion of fractional derivative is strictly linked to the order of the fractional operator (or of the induced boundary regularity)
and the second to the spectral structure of the classical Laplacian, the latter setting happens to be conveniently flexible and can deal with the case of operators of mixed order as well. In fact, as we aim to show in this paper, it can be advantageously exploited in a very general framework comprising also
the superposition of possibly infinitely many fractional Laplacians (and possibly a continuum of operators). The setting provided here is actually prone to further generalizations, but we present
the results in their simplest possible form. For this, we define the Neumann conditions associated to the operator $L_{\alpha, \mu}$ in~\eqref{ELLE} in the following way:
\begin{definition}\label{NDEFN}
We say that $u$ satisfies the $(\alpha,\mu)$-\textit{Neumann conditions} if
\begin{equation}\label{alfamucondition}
\begin{cases}
\displaystyle\int_{(0,1)} \Ns u(x)\,d\mu(s)=g(x)\,\, \mbox{ for all }x\in \R^N \setminus \overline{\Omega},    & \mbox{ if }\alpha=0,
\\ \\
\partial_\nu u(x)=h(x) \,\, \mbox{ for all } x\in \partial\Omega,
&  \mbox{ if } \mu\equiv 0,
\\ \\
\partial_\nu u(x)=h(x) \,\, \mbox{ for all } x\in \partial\Omega
 \mbox{ and } \\ 
 \displaystyle\int_{(0,1)} \Ns u(x)\,d\mu(s)=g(x)\,\, \mbox{ for all }x\in \R^N \setminus \overline{\Omega},  
 &\mbox{ if } \alpha\neq 0 \mbox{ and } \mu\not \equiv 0.
\end{cases}
\end{equation}

Moreover, when $g\equiv 0$ in $\R^N\setminus \overline{\Omega}$ 
and $h\equiv 0$ on $\partial \Omega$, we call the conditions in~\eqref{alfamucondition} as \textit{homogeneous $(\alpha,\mu)$-Neumann conditions}.
\end{definition}

It is worth noting that continuous superpositions of operators of different fractional orders have been recently considered in~\cite{MR4736013, DPSV-P, MR4821750}
in the more general case in which~$\mu$ is a signed measure.  We refer the interested reader also to \cite{MR4793906} for the case of combinations of fractional Laplacians with the ``wrong sign".
However,  the setting of~\cite{MR4736013, DPSV-P, MR4821750} is quite different from the one presented here, since it was tailored for problems subject to Dirichlet boundary conditions, while the main focus here is about conditions of Neumann type.

\subsection{Main results}
As a first result, we
show that the functions minimizing the integral of the Gagliardo seminorms
(which will be defined precisely in~\eqref{defsemipi659uir}), namely
\[
\int_{(0, 1)} [u]^2_s\,d\mu(s),
\]
are the ones satisfying the homogeneous Neumann condition
\begin{equation}\label{Neumannomogeneo}
\int_{(0, 1)} \Ns u(x) \,d\mu(s)=0
\end{equation}
for every $x\in \R^N\setminus \overline{\Omega}$ (and this is interesting, because
it relates the homogeneous Neumann condition directly to a variational problem).

\begin{theorem}\label{TH1ojc}
Let $u:\R^N\to \R$ with $u\in L^1(\Omega)$. For all $x\in \R^N\setminus \overline{\Omega}$, we define
\[
E_u(x):=\int_{(0, 1)}
c_{N,s}\int_\Omega \frac{u(z)}{|x-z|^{N+2s}}\,dz\,d\mu(s).
\]
We set
\begin{equation}\label{definizioneuutilde}
\tilde{u}(x):=
\begin{cases}
u(x)   & \mbox{ if } x\in \Omega,
\\ \\
\dfrac{E_u(x)}{E_1(x)}   &\mbox{ if } x\in\R^N\setminus \overline{\Omega},
\end{cases}
\end{equation}
where $E_1$ stands for $E_u$ when $u\equiv 1$. 

Then,
\begin{equation}\label{seminormagagliardo<}
\int_{(0, 1)}c_{N,s} \, [\widetilde u]_s^2 \,d\mu(s)\le \int_{(0, 1)}c_{N,s}\, [u]^2_s \,d\mu(s).
\end{equation}
\end{theorem}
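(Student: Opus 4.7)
The plan is to exploit the fact that $u$ and $\tilde u$ coincide on $\Omega$ and to reduce \eqref{seminormagagliardo<} to a pointwise minimization on the complement. Recall that in the Neumann setting the seminorm $[u]_s^2$ is supported on pairs $(x,y)$ that are not both exterior to $\Omega$, and so it splits as an interior contribution over $\Omega\times\Omega$ plus twice a cross contribution over $\Omega\times(\R^N\setminus\Omega)$. Since $\tilde u\equiv u$ on $\Omega$ by \eqref{definizioneuutilde}, the interior piece is invariant under the replacement $u\mapsto\tilde u$, and the whole inequality reduces to showing that the cross quantity
\[
\Phi(v):=\int_{(0,1)}c_{N,s}\int_{\R^N\setminus\overline{\Omega}}\int_\Omega\frac{(u(z)-v(x))^2}{|x-z|^{N+2s}}\,dz\,dx\,d\mu(s)
\]
satisfies $\Phi(\tilde u)\le\Phi(u)$.

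The heart of the argument is then a pointwise minimization. After invoking Fubini to exchange the $d\mu(s)$ and $dx$ integrations, for each fixed $x\in\R^N\setminus\overline{\Omega}$ one is left to minimize in the scalar parameter $\xi=v(x)$ the quadratic functional
\[
J_x(\xi):=\int_{(0,1)}c_{N,s}\int_\Omega\frac{(u(z)-\xi)^2}{|x-z|^{N+2s}}\,dz\,d\mu(s)=A(x)-2\xi\, E_u(x)+\xi^2\, E_1(x),
\]
with $A(x)$ independent of $\xi$. Since $\mu$ is nonnegative and nontrivial and $x$ has positive distance from $\Omega$, one has $E_1(x)>0$, so $J_x$ is strictly convex and attains its unique minimum at $\xi^{\ast}=E_u(x)/E_1(x)$, which is precisely $\tilde u(x)$ by \eqref{definizioneuutilde}. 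Therefore $J_x(\tilde u(x))\le J_x(u(x))$ for every $x\in\R^N\setminus\overline{\Omega}$, and integrating back in $x$ yields $\Phi(\tilde u)\le\Phi(u)$; combined with the invariance of the $\Omega\times\Omega$ piece this proves \eqref{seminormagagliardo<}.

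The only delicate step I anticipate is the justification of Fubini on the nonnegative integrand $c_{N,s}(u(z)-v(x))^2/|x-z|^{N+2s}$. If the right-hand side of \eqref{seminormagagliardo<} is infinite there is nothing to prove, so one may assume it is finite; together with the hypothesis $u\in L^1(\Omega)$, the finiteness of $\mu$ on $(0,1)$, and the uniform positive distance from $x$ to $\Omega$ that controls the singularity $|x-z|^{-N-2s}$, this should yield the required integrability and make the pointwise optimization above fully rigorous. Apart from this, the whole argument is just the observation that the exterior values of $u$ enter the energy only through a quadratic functional whose unique minimizer is, by construction, $\tilde u$.
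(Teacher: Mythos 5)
Your proposal is correct and follows essentially the same route as the paper: both reduce matters to the cross term over $\Omega\times(\R^N\setminus\overline\Omega)$ after noting that $\tilde u=u$ on $\Omega$, and both show that the exterior value $\tilde u(y)=E_u(y)/E_1(y)$ optimizes the resulting quadratic in the exterior value. The paper phrases this as completing the square around $\varphi(y):=u(y)-\tilde u(y)$ and observing that the linear term vanishes by the definition of $\tilde u$, which is exactly the first-order condition for your strictly convex functional $J_x$.
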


We also give an existence and uniqueness result for solutions of
\begin{equation}\label{prob1p}
\begin{cases}
L_{\alpha, \mu}(u) = f\quad \mbox{ in } \Omega,\\
\mbox{with }(\alpha,\mu)\mbox{-Neumann conditions}.
\end{cases}
\end{equation}
In the forthcoming Section~\ref{KDMCLDFMVV} we will introduce
the precise functional setting for this problem,
see in particular formulas~\eqref{Hmu} and~\eqref{HALPHAMU} for the definition
of the functional spaces~$\mathcal H_\mu (\Omega)$ and~$\mathcal{H}_{\alpha,\mu}(\Omega)$, respectively.

\begin{theorem}\label{uptoconstant}
Let $\Omega$ be a bounded Lipschitz domain. Assume that
$f\in L^2(\Omega)$, $g\in L^1(\R^N\setminus\overline\Omega)$ and $h\in L^1(\partial\Omega)$. Suppose that there exists $\psi \in C^2(\R^N)$ such that $\partial_\nu \psi =h$ on $\partial \Omega$ and
\[
\int_{(0, 1)}\Ns \psi \,d\mu(s)=g \mbox{ in } \R^N\setminus\overline{\Omega}.
\]

Then, problem~\eqref{prob1p} admits a solution in 
$\mathcal{H}_{\alpha,\mu}(\Omega)$ if and only if 
\begin{equation}\label{condizioneesistenzaunicità}
\int_\Omega f\,dx =-\int_{\R^N\setminus\Omega} g\,dx
-\alpha \int_{\partial\Omega}h\,d\mathscr{H}_x^{N-1}.
\end{equation}
Moreover, if~\eqref{condizioneesistenzaunicità} holds,
the solution is unique up to an additive constant.
\end{theorem}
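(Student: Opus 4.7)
The plan is to reduce the existence problem to a variational minimization on a quotient space, and to read off the compatibility condition from an integration by parts formula applied with the test function $1$.

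Necessity of~\eqref{condizioneesistenzaunicità}. Assuming the integration by parts formulas (announced in the abstract, and which should be established earlier in the paper for the space~$\mathcal{H}_{\alpha,\mu}(\Omega)$) have the shape
\[
\int_\Omega L_{\alpha,\mu}(u)\,\varphi\,dx
=\alpha\int_\Omega \nabla u\cdot\nabla\varphi\,dx
+\int_{(0,1)}\frac{c_{N,s}}{2}\int\!\!\!\int_{\R^{2N}\setminus(\Omega^c)^2}\frac{(u(x)-u(y))(\varphi(x)-\varphi(y))}{|x-y|^{N+2s}}\,dx\,dy\,d\mu(s)
-\alpha\int_{\partial\Omega}\varphi\,\partial_\nu u\,d\mathscr{H}^{N-1}
-\int_{\R^N\setminus\overline\Omega}\varphi\int_{(0,1)}\Ns u\,d\mu(s)\,dx,
\]
plugging in the test function~$\varphi\equiv 1$ makes the bilinear terms vanish and yields precisely~\eqref{condizioneesistenzaunicità}. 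This will be the quick part.

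Existence. First I would homogenize the boundary data: setting~$w:=u-\psi$, the function~$w$ solves~$L_{\alpha,\mu}(w)=f-L_{\alpha,\mu}(\psi)=:\tilde f$ in~$\Omega$ with homogeneous~$(\alpha,\mu)$-Neumann conditions, and one checks (again using the integration by parts above with~$\varphi\equiv 1$ applied to~$\psi$) that~$\tilde f$ still satisfies the compatibility condition~$\int_\Omega \tilde f\,dx=0$. I would then look for~$w$ as the minimizer of the quadratic energy
\begin{equation*}
\mathcal{E}(w):=\frac{\alpha}{2}\int_\Omega|\nabla w|^2\,dx
+\frac12\int_{(0,1)}c_{N,s}\,[w]_s^2\,d\mu(s)
-\int_\Omega \tilde f\,w\,dx
\end{equation*}
on the closed subspace
\[
\mathcal{H}^0_{\alpha,\mu}(\Omega):=\Bigl\{w\in\mathcal{H}_{\alpha,\mu}(\Omega):\ \int_\Omega w\,dx=0\Bigr\}.
\]
Applying the direct method requires coercivity of the quadratic form, which in turn requires a Poincaré-type inequality on~$\mathcal{H}^0_{\alpha,\mu}(\Omega)$; this is the step I expect to be the main obstacle, because the measure~$\mu$ may charge~$s$ arbitrarily close to~$0$ (where the Gagliardo seminorm degenerates) and one has to combine the classical Neumann-Poincaré inequality (if~$\alpha>0$) with a fractional Neumann-Poincaré inequality uniformly controlled in~$s$. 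Under the standing assumption that~$\mu$ is a nontrivial finite Borel measure on~$(0,1)$, a standard argument by contradiction, using a compact embedding coming from the Gagliardo seminorm associated to any~$s$ in the support of~$\mu$ (or to the classical gradient if~$\alpha>0$), should deliver the inequality. Once this is in hand, weak lower semicontinuity of~$\mathcal{E}$ and weak compactness of minimizing sequences yield a minimizer~$w_0$. The compatibility condition~$\int_\Omega\tilde f\,dx=0$ guarantees that the linear term is well-defined on the quotient, so~$w_0$ is also a critical point of~$\mathcal{E}$ on the whole~$\mathcal{H}_{\alpha,\mu}(\Omega)$; by the integration by parts formula recalled above,~$u:=w_0+\psi$ solves~\eqref{prob1p}.

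Uniqueness up to an additive constant. If~$u_1,u_2$ are two solutions, then~$v:=u_1-u_2$ satisfies $L_{\alpha,\mu}(v)=0$ with homogeneous~$(\alpha,\mu)$-Neumann conditions. Testing against~$v$ itself via the integration by parts formula gives
\[
\alpha\int_\Omega|\nabla v|^2\,dx+\int_{(0,1)}c_{N,s}\,[v]_s^2\,d\mu(s)=0,
\]
so either~$\nabla v\equiv 0$ on~$\Omega$ (when~$\alpha>0$), or~$[v]_s=0$ for~$\mu$-a.e.~$s\in(0,1)$ (when~$\alpha=0$); in either case~$v$ is constant on~$\Omega$, and the Neumann condition on~$\R^N\setminus\overline\Omega$ then forces~$v$ to equal that same constant on the exterior as well, concluding the proof.
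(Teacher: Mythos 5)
Your proposal is correct, but the existence part follows a genuinely different route from the paper's. The paper does not minimize the energy on the mean-zero subspace: it perturbs the operator to $L_{\alpha,\mu}+\mathrm{Id}$, whose bilinear form is coercive on all of $\mathcal{H}_{\alpha,\mu}(\Omega)$ with no Poincar\'e inequality needed, solves the auxiliary problem $L_{\alpha,\mu}v+v=\xi$ by the Riesz representation theorem, and then shows that the resolvent $T\xi:=v\big|_\Omega$ is a compact self-adjoint operator on $L^2(\Omega)$ whose kernel of $\mathrm{Id}-T$ consists exactly of the constants (via the maximum-principle Lemma~\ref{lemmasoluzionecostante}); the Fredholm alternative then identifies $\mathrm{Im}(\mathrm{Id}-T)$ with the mean-zero functions, which is precisely the solvability condition. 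Your direct method instead needs the Neumann--Poincar\'e inequality on the mean-zero subspace up front; you rightly flag this as the crux, and it does hold: the paper proves precisely this inequality as Lemma~\ref{POINCARE}, by the compactness--contradiction argument you sketch, resting on the embedding of Proposition~\ref{propcompactembedding} through an exponent $s_\sharp$ with $\mu([s_\sharp,1))>0$ (the degeneracy of the seminorm as $s\searrow0$ is harmless because one only needs a single good scale inside the support of $\mu$). The trade-off is that the paper's route gets coercivity for free and reads the compatibility condition off the Fredholm alternative, while yours produces the solution as an explicit minimizer and is more self-contained once the Poincar\'e inequality is in hand. Your homogenization by $\psi$, the necessity of~\eqref{condizioneesistenzaunicità} via the constant test function, and the uniqueness argument coincide with the paper's (the latter is exactly Lemma~\ref{lemmasoluzionecostante}). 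One small correction: for the Euler--Lagrange equation to match Definition~\ref{weakdefn}, the Gagliardo term in your energy should be $\tfrac14\int_{(0,1)}[w]_s^2\,d\mu(s)$ rather than $\tfrac12\int_{(0,1)}c_{N,s}\,[w]_s^2\,d\mu(s)$; note that $[w]_s^2$ as defined in~\eqref{defsemipi659uir} already contains the factor $c_{N,s}$.
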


We observe that the statement of Theorem~\ref{uptoconstant} is in analogy with the case~$\mu\equiv 0$ (see~\cite[page~294]{MR3012036}) and with the fractional case in which~$\alpha=0$ and~$\mu$ is a Dirac delta centered at some~$s\in(0,1)$ (see~\cite[Theorem 3.9]{MR3651008}). The proof of Theorem \ref{uptoconstant} follows the same strategy of \cite[Theorem 3.9]{MR3651008} and therefore it is postponed to Appendix \ref{appendix1.3}.

The following result deals with the behavior at infinity of 
functions satisfying the homogeneous Neumann condition
in $\R^N\setminus \overline{\Omega}$.

\begin{theorem}\label{ASYM}
Let $\Omega$ be a bounded open subset of $\R^N$.
Let $u\in \mathcal{H}_\mu(\Omega)$ be a bounded function such that
\[
\int_{(0,1)} \Ns u\,d\mu(s)=0 \,\,
\mbox{ in } \R^N \setminus \overline{\Omega}.
\]
Then
\[
\lim_{|x|\to +\infty}u(x)
=\frac{1}{|\Omega|}\int_\Omega u(x)\,dx.
\]
\end{theorem}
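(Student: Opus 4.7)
My plan is to exploit the homogeneous Neumann condition to obtain the explicit representation
\[
u(x)=\frac{E_u(x)}{E_1(x)}\qquad\text{for every }x\in\R^N\setminus\overline\Omega,
\]
so that $u$ agrees outside $\overline\Omega$ with the function $\widetilde u$ of Theorem~\ref{TH1ojc}. This first step will be obtained by pulling $u(x)$ out of the inner integral and applying Fubini: the condition $\int_{(0,1)}\Ns u(x)\,d\mu(s)=0$ rewrites as $u(x)\,E_1(x)=E_u(x)$, and since $\mu$ is nontrivial and $c_{N,s}>0$, we have $E_1(x)>0$ on $\R^N\setminus\overline\Omega$, which allows one to solve for $u(x)$.

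Next, I would establish the uniform-in-$s$ kernel estimate
\[
\left|\left(\frac{|x|}{|x-y|}\right)^{N+2s}-1\right|\le \frac{C}{|x|}\qquad\text{for all }s\in(0,1)\text{ and }y\in\Omega,
\]
valid once $|x|$ is sufficiently large. Assuming without loss of generality that $\Omega\subset B_R$, for $|x|\ge 2R$ and $y\in\Omega$ the ratio $|x|/|x-y|$ lies in the compact interval $[2/3,2]$ and differs from $1$ by $O(R/|x|)$; the displayed estimate will then follow from the mean value theorem, since the exponent $N+2s$ is uniformly bounded on $(0,1)$, with $C$ depending only on $N$ and $\Omega$.

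To conclude, I would set $\bar u:=|\Omega|^{-1}\int_\Omega u$ and exploit $\int_\Omega(u(y)-\bar u)\,dy=0$ to rewrite
\[
u(x)-\bar u=\frac{1}{E_1(x)}\int_{(0,1)}\frac{c_{N,s}}{|x|^{N+2s}}\int_\Omega\left[\left(\frac{|x|}{|x-y|}\right)^{N+2s}-1\right](u(y)-\bar u)\,dy\,d\mu(s).
\]
Setting $I(x):=\int_{(0,1)}c_{N,s}|x|^{-(N+2s)}\,d\mu(s)$, the uniform kernel bound controls the numerator by $C\|u-\bar u\|_{L^1(\Omega)}\,I(x)/|x|$, while the same bound applied to $u\equiv 1$ gives $E_1(x)\ge(|\Omega|/2)\,I(x)$ for $|x|$ large. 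The common factor $I(x)$ will then cancel, leaving $|u(x)-\bar u|\le \mathrm{const}/|x|$, which tends to $0$.

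The main difficulty will be to keep every estimate uniform in $s\in(0,1)$, so that the possibly degenerate behaviour of $c_{N,s}$ at the endpoints never plays a role: the measure $\mu$ will enter the argument only through the common factor $I(x)$, which cancels out of the final ratio, and through its nontriviality, which is needed for $E_1>0$.
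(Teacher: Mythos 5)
Your proposal is correct, and while it shares the paper's starting point — solving the homogeneous Neumann condition for $u(x)=E_u(x)/E_1(x)$ outside $\overline\Omega$ and then exploiting that the kernel $|x-y|^{-(N+2s)}$ is nearly constant in $y\in\Omega$ for large $|x|$ — the execution is genuinely different and in one respect cleaner. The paper adds a constant to make $u$ nonnegative, sandwiches $\tilde u(x)$ between two ratios built from the kernels $(|x|\pm R)^{-(N+2s)}$, and then must prove the nontrivial limit
\[
\lim_{|x|\to+\infty}\dfrac{\displaystyle\int_{(0,1)} c_{N,s}\,(|x|+R)^{-(N+2s)}\,d\mu(s)}{\displaystyle\int_{(0,1)} c_{N,s}\,(|x|-R)^{-(N+2s)}\,d\mu(s)}=1,
\]
which is exactly where the authors say the measure $\mu$ forces ``bespoke modifications'' (they expand $1-(1-t)^{N+2s}$ and bound it by $(N+2)t$). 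You instead subtract the mean $\bar u$, use $\int_\Omega(u-\bar u)=0$ to replace the kernel by its deviation from the single reference kernel $|x|^{-(N+2s)}$, and control that deviation uniformly in $s$ by $C/|x|$; the $\mu$-dependence then enters only through the factor $I(x)=\int_{(0,1)}c_{N,s}|x|^{-(N+2s)}\,d\mu(s)$, which appears in both numerator and denominator and cancels. This avoids both the sign normalization and the ratio-of-$\mu$-integrals limit, and it yields the quantitative rate $|u(x)-\bar u|\le C/|x|$, which the paper's squeeze argument does not state explicitly. The only points to make sure you spell out in a full write-up are the finiteness of $E_u(x)$ and $E_1(x)$ (immediate since $u$ is bounded, $\operatorname{dist}(x,\Omega)>0$, $c_{N,s}$ is bounded on $(0,1)$ and $\mu$ is finite) and the strict positivity $E_1(x)>0$ (from $c_{N,s}>0$ and the nontriviality of $\mu$), both of which you correctly flag.
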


We also study the eigenvalue problem associated with our superposition operator under Neumann conditions:

\begin{theorem}\label{ETGBAVA}
Let $\Omega\subset\R^n$ be a bounded Lipschitz domain. 

Then, there exists a sequence of eigenvalues
\[
0=\lambda_1<\lambda_2\le\lambda_3\dots
\]
with a corresponding sequence of eigenfunctions $u_i:\R^n\to\R$,
with~$i\in\{1,2,\dots\}$, which provide a complete orthogonal system in~$L^2(\Omega)$.
\end{theorem}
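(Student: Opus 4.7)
The plan is to cast the problem as a Rayleigh--Ritz variational problem for the natural energy form
\[
\mathcal{E}(u,u) := \int_{(0,1)} c_{N,s}\,[u]_s^2\,d\mu(s) + \alpha \int_\Omega |\nabla u|^2\,dx
\]
on the space $\mathcal{H}_{\alpha,\mu}(\Omega)$ (replaced by $\mathcal{H}_{\mu}(\Omega)$ when $\alpha=0$), and then to invoke the spectral theorem for self-adjoint compact operators. The first eigenvalue is identified immediately: $\lambda_1=0$, with eigenfunction $u_1$ proportional to a nonzero constant, since any constant lies in the kernel of $L_{\alpha,\mu}$ and satisfies the homogeneous $(\alpha,\mu)$-Neumann conditions trivially (as $\Ns c\equiv 0$ for every $s\in(0,1)$ and $\partial_\nu c\equiv 0$ on $\partial\Omega$).

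To produce the remaining spectrum, I would restrict to the closed zero-mean subspace
\[
X_0:=\Big\{u\in \mathcal{H}_{\alpha,\mu}(\Omega)\,:\,\int_\Omega u\,dx=0\Big\}
\]
and establish two ingredients: \textbf{(a)} compactness of the embedding $X_0\hookrightarrow L^2(\Omega)$; and \textbf{(b)} a Poincaré--Wirtinger-type inequality $\|u\|_{L^2(\Omega)}^2\le C\,\mathcal{E}(u,u)$ on $X_0$. For \textbf{(a)}, I would exploit the nontriviality of $\mu$ on $(0,1)$ to find $s_*\in(0,1)$ with $\mu([s_*,1))>0$; restricting the $d\mu(s)$-integration to $[s_*,1)$ and discarding the nonnegative contributions coming from interactions with $\R^n\setminus\overline\Omega$ yields a lower bound for $\mathcal{E}(u,u)$ by a positive multiple of the intrinsic fractional Sobolev seminorm on $\Omega$ of order $s_*$, and the classical compact embedding $H^{s_*}(\Omega)\hookrightarrow L^2(\Omega)$ on Lipschitz domains closes the argument (when $\alpha>0$, the Rellich--Kondrachov theorem on $H^1(\Omega)$ does the job even more directly). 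I expect \textbf{(a)} to be the main technical obstacle, because the Gagliardo-type seminorm $[u]_s$ in the paper's convention is not the intrinsic one on $\Omega$ but also involves cross-interactions with $\R^n\setminus\overline\Omega$, and because $\mu$ may a priori concentrate near $s=0$ or $s=1$; extracting a single fractional order $s_*$ bounded away from $0$ at which $\mu$ still has positive mass is the essential device that bypasses these difficulties. From~\textbf{(a)}, ingredient~\textbf{(b)} then follows by a standard contradiction argument: an extremal sequence for the Rayleigh quotient would converge strongly in $L^2(\Omega)$ to a unit-norm, zero-mean function of vanishing energy, forcing it to be constant and hence zero.

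Once \textbf{(a)} and \textbf{(b)} are in place, the operator associated with $\mathcal{E}$ on $X_0$ (via Lax--Milgram/Riesz representation) admits a compact, self-adjoint, positive inverse on $L^2(\Omega)\cap X_0$. Its spectrum is therefore a nondecreasing sequence $0<\lambda_2\le\lambda_3\le\dots\to+\infty$ of eigenvalues of finite multiplicity, and the corresponding eigenfunctions $\{u_i\}_{i\ge 2}$ form an orthogonal system in $L^2(\Omega)\cap X_0$; together with $u_1$ these yield the claimed complete orthogonal system of $L^2(\Omega)$. That each eigenfunction $u_i$ solves $L_{\alpha,\mu}(u_i)=\lambda_i u_i$ in $\Omega$ with homogeneous $(\alpha,\mu)$-Neumann conditions is read off, via the integration-by-parts identities developed elsewhere in the paper, from the Euler--Lagrange equation of the Courant--Fischer characterization
\[
\lambda_k=\min_{\substack{V\subset X_0\\ \dim V=k-1}}\,\max_{\substack{u\in V\\ \|u\|_{L^2(\Omega)}=1}} \mathcal{E}(u,u),
\]
concluding the proof.
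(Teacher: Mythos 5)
Your proposal is correct and follows essentially the same strategy as the paper: reduce to the zero-mean subspace, obtain compactness from the nontriviality of $\mu$ by extracting a fractional order $s_\sharp$ with $\mu([s_\sharp,1))>0$ (Proposition~\ref{propcompactembedding}), prove the Poincar\'e--Wirtinger inequality by the same compactness--contradiction argument (Lemma~\ref{POINCARE}), and apply the spectral theorem to a compact self-adjoint solution operator. The only (immaterial) difference is that you build the inverse directly by Lax--Milgram on the coercive form restricted to $X_0$, whereas the paper routes through the existence--uniqueness result of Theorem~\ref{uptoconstant} and its Fredholm-alternative proof.
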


Interestingly, the Neumann condition forces a global continuity property, as stated in the following result:

\begin{theorem}\label{propcontinuità}
Let $\Omega\subset \R^N$ be a domain with $C^1$ boundary. Let 
$u$ be continuous in $\overline{\Omega}$, with 
\begin{equation}\label{NNN}
\int_{(0,1)}\Ns u\,d\mu(s)=0 \mbox{ in } \R^N\setminus \overline{\Omega}.
\end{equation}
Then, $u$ is continuous in the whole of $\R^N$.
\end{theorem}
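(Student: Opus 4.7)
\emph{Plan.} My strategy is to exploit the explicit representation of $u$ on $\R^N\setminus\overline\Omega$ forced by the homogeneous Neumann condition~\eqref{NNN}. For $x\in\R^N\setminus\overline\Omega$ the distance $|x-y|$ stays bounded away from zero on $\Omega$, so the splitting
\[
\Ns u(x) = u(x)\cdot c_{N,s}\int_\Omega\frac{dy}{|x-y|^{N+2s}}
-c_{N,s}\int_\Omega\frac{u(y)}{|x-y|^{N+2s}}\,dy
\]
is legitimate with both pieces finite. Integrating against $d\mu(s)$ and using~\eqref{NNN} yields $u(x)\,E_1(x)=E_u(x)$, i.e.\ $u(x)=\widetilde u(x)$ on $\R^N\setminus\overline\Omega$ in the notation of Theorem~\ref{TH1ojc}. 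The task thus reduces to showing that $\widetilde u$ is continuous on all of $\R^N$.

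Interior continuity (in $\Omega$) is assumed, and continuity on the open exterior $\R^N\setminus\overline\Omega$ is an immediate dominated-convergence application to $E_u$ and $E_1$ on compact sets separated from $\overline\Omega$. The substantial issue is continuity across $\partial\Omega$. Fix $x_0\in\partial\Omega$ and $\varepsilon>0$, and use uniform continuity of $u$ on $\overline\Omega$ to pick $\delta>0$ with $|u(y)-u(x_0)|<\varepsilon$ on $\overline\Omega\cap B_\delta(x_0)$. For $x\in B_{\delta/2}(x_0)\setminus\overline\Omega$, write
\[
E_u(x)-u(x_0)\,E_1(x)=\int_{(0,1)}c_{N,s}\int_\Omega\frac{u(y)-u(x_0)}{|x-y|^{N+2s}}\,dy\,d\mu(s)
\]
and split the $y$-integral at $B_\delta(x_0)$: the local part is bounded by $\varepsilon\,E_1(x)$, while the global part is bounded by some constant $C_\delta$ independent of $x$ (on $\Omega\setminus B_\delta(x_0)$ one has $|x-y|\ge\delta/2$, and $u$, $c_{N,s}$, $\mu$, $|\Omega|$ are all bounded/finite). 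Dividing by $E_1(x)$,
\[
|u(x)-u(x_0)|\le \varepsilon+\frac{C_\delta}{E_1(x)}.
\]

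It therefore remains to show $E_1(x)\to+\infty$ as $x\to x_0$ with $x\notin\overline\Omega$, and this is the main obstacle. The $C^1$ regularity of $\partial\Omega$ provides a uniform interior cone condition, so for $d(x):=\mathrm{dist}(x,\overline\Omega)$ small, $\Omega$ contains a truncated cone $\mathcal C$ of fixed aperture and length with vertex at the boundary projection $\pi(x)$. The rescaling $y\mapsto z=(y-\pi(x))/d(x)$ gives
\[
c_{N,s}\int_{\mathcal C}\frac{dy}{|x-y|^{N+2s}}
=c_{N,s}\,d(x)^{-2s}\int_{\mathcal C/d(x)}\frac{dz}{|\nu-z|^{N+2s}},
\]
where $\nu=(x-\pi(x))/d(x)$ is a unit vector and the rescaled cones increase to an infinite cone $\mathcal C_\infty$. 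By monotone convergence the $z$-integral tends to a positive, finite quantity $J(s)$ that is continuous in $s\in(0,1)$. Choose $[a,b]\subset(0,1)$ with $\mu([a,b])>0$ (possible since $\mu$ is nontrivial on $(0,1)$): on $[a,b]$ the product $c_{N,s}J(s)$ is bounded below by a positive constant, and $d(x)^{-2s}\ge d(x)^{-2a}$ for $d(x)<1$, yielding $E_1(x)\ge c\,d(x)^{-2a}\to+\infty$. Both the $C^1$ geometry and the fact that $\mu$ is supported in the open interval $(0,1)$ (where $c_{N,s}>0$, vanishing at the endpoints) are essential for this quantitative blow-up. Letting $\varepsilon\to 0$ concludes the argument.
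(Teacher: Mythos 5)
Your proof is correct, and while it follows the same overall scheme as the paper's (exterior continuity via the representation $u=E_u/E_1$ and dominated convergence, then a near/far splitting of the numerator at a boundary point), your treatment of the delicate boundary step is genuinely different and simpler. The paper splits the integral at the shrinking radius $\sqrt{h_k}$ around the projection $q_k$ of $p_k$; the price is that the far part $I_2$ still carries a singular factor as $h_k\to0$, and comparing the $\mu$-integrals of $h_k^{-7s/4}$ (numerator) and $h_k^{-2s}$ (denominator) requires the renormalization by $h_k^{2(7/8)^m}$, with $m$ chosen so that $\mathrm{supp}(\mu)$ meets $((7/8)^m,(7/8)^{m-1}]$, plus Fatou's lemma. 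You instead split at a fixed radius $\delta$ coming from uniform continuity, so your far part is bounded by a constant $C_\delta$ independent of $x$, and the whole difficulty is displaced onto the single claim $E_1(x)\to+\infty$, which you establish by the interior cone condition and by restricting to a compact interval $[a,b]\subset(0,1)$ with $\mu([a,b])>0$, on which $c_{N,s}$ is bounded below. This lower bound on the denominator is essentially the device the paper uses in Appendix~\ref{KDMCLDFMVV4} for $\beta_{k,s}$, so your argument can be read as a streamlined combination: you trade the paper's single sequential limit for a standard $\varepsilon$--$\delta$ double limit and thereby avoid the $(7/8)^m$ trick entirely. Two small polish points: for the lower bound on $E_1$ you do not actually need the monotone-convergence limit $J(s)$, only the lower bound given by the unit-height truncated cone, on which $|\nu-z|\le 2$; and the existence of $[a,b]$ with $\mu([a,b])>0$ should be justified by continuity from below of the finite measure $\mu$ on $(0,1)=\bigcup_n[1/n,1-1/n]$.
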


These main results will be complemented by other auxiliary results
which have their own interest, such as
continuous and compact embeddings (Proposition~\ref{propcompactembedding}),
divergence theorems and integration by parts formulas
(Lemmata~\ref{lemmathdivergenza}
and~\ref{lemmaperparti}),
maximum principles (Lemma~\ref{lemmasoluzionecostante}),
Poincaré inequalities (Lemma~\ref{POINCARE}),
mass conservation, energy decresingness and asymptotics for the heat equation
(Propositions~\ref{proposizionemassacostante}, \ref{DECENSD} and~\ref{ASUYY}),
asymptotic properties of a normalized Neumann condition (Proposition~\ref{ASMDNSDIKF}), 
superpositions of fractional perimeters, etc.

\subsection{Examples and possible applications}
The setting that we introduce here for the Neumann conditions
extends the one proposed in~\cite{MR3651008} but is also entirely new in its wide generality. Moreover, the assumptions on the measure~$\mu$ allow us to consider many specific cases which, to the best of our knowledge, have never been addressed in the literature.  We provide some examples\footnote{As customary,
$\delta_s $ denotes the Dirac measure concentrated at some fractional power $s\in (0, 1)$.} below:
\begin{itemize}
\item If $\mu=\delta_s$ for some~$s\in(0,1)$ and~$\alpha=0$, then the operator introduced~\eqref{ELLE} reduces to the fractional Laplacian, for some~$s\in (0, 1)$. In this case, the Neumann conditions in~\eqref{alfamucondition} turn out to be
\[
\Ns u(x) = g(x) \quad\mbox{ for all }x\in \R^N \setminus \overline{\Omega}.
\]
In this case, we find exactly the setting introduced in~\cite{MR3651008};
\item If $\mu=\beta\delta_s$ with $\beta>0$ and $\alpha\neq 0$, then the operator defined in~\eqref{ELLE} reduces to the mixed local and nonlocal problem
\begin{equation}\label{PS23ENt45I2S1}
L_{\alpha, \mu} (u) = -\alpha \Delta u + \beta (-\Delta)^s u
\end{equation}
and the Neumann conditions in~\eqref{alfamucondition} simplify to
\begin{equation}\label{PS23ENt45I2S}
\begin{cases}
\partial_\nu u(x)=h(x) &\mbox{ for all } x\in \partial\Omega,\\
\Ns u(x) = g (x)&\mbox{ for all }x\in \R^N \setminus \overline{\Omega}.
\end{cases}
\end{equation}
The mixed operator~\eqref{PS23ENt45I2S1} has been widely studied in the recent literature
also in regards to 
elliptic regularity~\cite{
MR2129093, MR2928344, MR2912450, MR3724879, MR4387204, MR4381148, MR4693935},
parabolic estimates~\cite{MR2653895, MR2911421}, 
classification and symmetry results~\cite{MR3485125},
geometric and functional inequalities~\cite{MR4391102, MR4645045},
numerical schemes~\cite{MR2679574},
Aubry-Mather theory~\cite{MR2542727},
transport in magnetic fields~\cite{BLZ},
etc.

The study of this mixed operator problem under the Neumann condition~\eqref{PS23ENt45I2S}
has emerged in~\cite{MR4438596, MR4651677} and has been further extended in \cite{MPL22}.

\item Let $n\in\N$ with $n\ge 2$ and consider the measure given by
\begin{equation}\label{ESSOMMAFINITA}
\mu:= \sum_{k=1}^n \delta_{s_k}.
\end{equation}
If $\alpha\neq 0$, the operator in~\eqref{ELLE} provides the superposition of a Laplacian and $n$ nonlocal operators with different orders,  namely
\[
L_{\alpha, \mu} (u) = -\alpha \Delta u + \sum_{k=1}^n (-\Delta)^{s_k} u.
\]
Also, the Neumann conditions in~\eqref{alfamucondition} reduce to
\[
\begin{cases}
\partial_\nu u(x)=h(x) & \mbox{ for all } x\in \partial\Omega,\\
\displaystyle\sum_{k=1}^n \mathscr{N}_{s_k} u (x)= g(x) &\mbox{ for all }x\in \R^N \setminus \overline{\Omega}.
\end{cases}
\]
At the best of our knowledge, this setting is new in the literature;
\item If $\alpha=0$ and $\mu$ is as in~\eqref{ESSOMMAFINITA}, we have the superpositions of $n$ fractional Laplacians, i.e.
\[
L_{\alpha, \mu} (u) = \sum_{k=1}^n (-\Delta)^{s_k} u
\]
with the Neumann conditions in~\eqref{alfamucondition} given by
\[
\sum_{k=1}^n\mathscr N_{s_k} u(x) = g(x) \quad\mbox{ for all }x\in \R^N \setminus \overline{\Omega}.
\]
This case is also new;
\item If $\alpha\neq 0$ and 
\begin{equation}\label{ESSERIE}
\mu:= \sum_{k=1}^{+\infty} c_k \delta_{s_k},
\end{equation}
with $c_k\ge 0$ for any $k\in\N\setminus\{0\}$, then we are able to address the case of a mixed order operator consisting of a Laplacian and a convergent series of infinitely many fractional Laplacians, namely the operator in~\eqref{ELLE} reads as
\[
L_{\alpha, \mu} (u) = -\alpha \Delta u + \sum_{k=1}^{+\infty} c_k\, (-\Delta)^{s_k} u.
\]
In this case, the Neumann conditions reduce to 
\[
\begin{cases}
\partial_\nu u(x)=h(x)& \mbox{ for all } x\in \partial\Omega,\\
\displaystyle\sum_{k=1}^{+\infty} c_k \, \mathscr{N}_{s_k} u(x) = g(x)& \mbox{ for all }x\in \R^N \setminus \overline{\Omega}.
\end{cases}
\]
As far as we know, this problem happens to be new as well;
\item If $\alpha=0$ and $\mu$ is defined as in~\eqref{ESSERIE} with $c_k\ge 0$ for any $k\in\N\setminus\{0\}$, then the operator in~\eqref{ELLE} is written as a convergent series of nonlocal operators, that is
\[
L_{\alpha, \mu} (u) = \sum_{k=1}^{+\infty} c_k\, (-\Delta)^{s_k} u.
\]
Moreover, the Neumann conditions in~\eqref{alfamucondition} are given by
\[
\sum_{k=1}^{+\infty}\mathscr N_{s_k} u (x)= g(x) \quad\mbox{ for all }x\in \R^N \setminus \overline{\Omega}.
\]
Also this case has not been explored in the existing literature;
\item Given a measurable, nonnegative and not identically zero function $f$, we consider the continuous superposition of fractional operators corresponding to the measure $\mu$ such that
\begin{equation}\label{DMU}
d\mu(s):= f(s) \, ds,
\end{equation}
where $ds$ stands for the usual Lebesgue measure. In this case,  if $\alpha\neq 0$, the operator in~\eqref{ELLE} boils down to
\[
L_{\alpha, \mu} (u) = -\alpha \Delta u + \int_0^1 f(s) (-\Delta)^s u \, ds.
\]
With this choice, the Neumann conditions are given by
\[
\begin{cases}
\partial_\nu u(x)=h (x)& \mbox{ for all } x\in \partial\Omega,\\
\displaystyle\int_0^1 f(s)\, \Ns u(x) \, ds = g(x)&\mbox{ for all }x\in \R^N \setminus \overline{\Omega}.
\end{cases}
\]
To the best of our knowledge,  the existing literature lacks of this specific case.
\item If $\alpha=0$ and the measure $\mu$ satisfies~\eqref{DMU}, then we can cover the case of a continuous superposition of fractional operators of the form
\[
L_{\alpha, \mu} (u) =  \int_0^1 f(s) (-\Delta)^s u \, ds
\]
and the Neumann conditions in~\eqref{alfamucondition} reduce to
\[
\int_0^1 f(s)\, \Ns u(x) \, ds = g(x) \quad\mbox{ for all }x\in \R^N \setminus \overline{\Omega}.
\]
\end{itemize}

\subsection{Organization of the paper}
The rest of this paper is organized as follows. The functional framework needed for this paper will be presented in Section~\ref{KDMCLDFMVV}. 

Section~\ref{JOSDNKDJO-1} will put forth
the integration by parts formulas, discuss minimization properties
and give the proof of Theorem~\ref{TH1ojc}.

Section~\ref{PSJLDSLD-2e0dfjev} will make precise the notion of weak solutions and provide the proofs of Theorem~\ref{ASYM}.

The spectral theory and the proof of Theorem~\ref{ETGBAVA} will be contained in Section~\ref{xPSJODLOJDFLFVOLNF023woer}.

The heat equation will be discussed in Section~\ref{KDMCLDFMVV2}
and Section~\ref{KDMCLDFMVV3} will be devoted to the
proof of Theorem~\ref{propcontinuità} (an alternative proof of
Theorem~\ref{propcontinuità} being showcased in Appendix~\ref{KDMCLDFMVV4}).

The connection with fractional perimeters will be presented in Section~\ref{OSJDNDNMwefV}.

Appendix \ref{appendix1.3} contains the proof of Theorem \ref{uptoconstant}. 

Appendices \ref{appendixHilbert}, \ref{appendixfunctional} and \ref{appendixASUYY} contain some results which easily stem from \cite[Proposition 3.1, Proposition 3.7, Proposition 4.3]{MR3651008}. For the sake of completeness, we provide them with all the details.

\section{Functional setting}\label{KDMCLDFMVV}

In this section, we introduce the functional setting that we work in. 
To maintain the discussion as simple as possible, we follow the approach in~\cite{MR3651008};
a different approach with a more delicate choice of test spaces would require the finer analysis put forth in~\cite{MR4683738}.

We start by considering the case in which only nonlocal interactions are present,
that is~$\alpha=0$.
To do this, we introduce the Gagliardo seminorm, that is 
\begin{equation}\label{defsemipi659uir}
[u]_s:= \left({c_{N, s}}\iint_{\mathcal Q} \frac{|u(x)-u(y)|^2}{|x-y|^{N+2s}} \, dx\,dy\right)^{1/2},
\end{equation}
where $\Q:=\R^{2N}\setminus (\R^N\setminus \Omega)^2$.

Given~$g\in L^1(\R^N\setminus\Omega)$, we also define the norm
\begin{equation}\label{normamu}
\|u\|_\mu := \left(\|u\|^2_{L^2(\Omega)} + \| |g|^{1/2} u\|^2_{L^2(\R^N\setminus\Omega)} +\frac12\int_{(0, 1)} [u]_s^2 \, d\mu(s) \right)^{1/2}
\end{equation}
as well as the space $\mathcal H_\mu (\Omega)$ of all measurable functions such that the norm in~\eqref{normamu} is finite, namely
\begin{equation}\label{Hmu}
\mathcal H_\mu (\Omega):= \left\lbrace u:\R^N\to\R \text{ measurable } : \|u\|_\mu <+\infty \right\rbrace.
\end{equation}

To deal with the more general case, 
we consider functions~$g\in L^1(\R^N\setminus\Omega)$ and~$h\in L^1(\partial\Omega)$ and define the norm
\begin{equation}\label{normaalfamu}
\begin{split}
\|u\|_{\alpha, \mu} := &\Bigg(\|u\|^2_{L^2(\Omega)} + \||h|^{1/2} u\|^2_{L^2(\partial\Omega)} + \| |g|^{1/2} u\|^2_{L^2(\R^N\setminus\Omega)} \\
&\qquad\qquad+ \alpha \|\nabla u\|^2_{L^2(\Omega)} +\frac12 \int_{(0, 1)} [u]_s^2 \, d\mu(s) \Bigg)^{1/2}
\end{split}
\end{equation}
and the space
\begin{equation}\label{HALPHAMU}
\mathcal H_{\alpha, \mu} (\Omega):= \begin{cases}
H^1(\Omega) &\hbox{ if } \mu \equiv 0,\\
\mathcal H_\mu (\Omega) &\hbox{ if } \alpha = 0,\\
H^1(\Omega)\cap \mathcal H_\mu (\Omega) &\hbox{ if }  \mu \not\equiv 0 \text{ and } \alpha\neq 0,
\end{cases}
\end{equation}
endowed with the norm in~\eqref{normaalfamu}.

It is easy to check that the norms defined in~\eqref{normamu} and~\eqref{normaalfamu} are induced by the scalar products
\begin{equation}\label{prodottoscalaremu}
(u,v)_\mu:=\int_\Omega uv\,dx
+\int_{\R^N\setminus \Omega}|g|uv\,dx
+\int_{(0,1)} \frac{c_{N,s}}{2}
\iint_\Q \frac{(u(x)-u(y))(v(x)-v(y))}{|x-y|^{N+2s}}\,dx\,dy\,d\mu(s)
\end{equation}
and
\begin{equation}\label{prodottoscalarealfamu}
\begin{aligned}
(u,v)_{\alpha,\mu}&:=\int_\Omega uv\,dx
+\int_{\partial \Omega}|h|uv\,d\mathscr H_x^{N-1}
+\int_{\R^N\setminus \Omega}|g|uv\,dx
+\alpha\int_\Omega \nabla u\cdot \nabla v\,dx \\
&\qquad+\int_{(0,1)} \frac{c_{N,s}}{2}
\iint_\Q \frac{(u(x)-u(y))(v(x)-v(y))}{|x-y|^{N+2s}}\,dx\,dy\,d\mu(s),
\end{aligned}
\end{equation}
respectively.

\begin{proposition}
$(\mathcal{H}_{\alpha,\mu}(\Omega), \|\cdot\|_{\alpha, \mu})$ is a Hilbert space.
\end{proposition}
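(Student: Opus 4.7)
The plan is first to confirm that $(\cdot,\cdot)_{\alpha,\mu}$ defined in \eqref{prodottoscalarealfamu} is a genuine inner product on $\mathcal{H}_{\alpha,\mu}(\Omega)$, and then to establish completeness. Bilinearity and symmetry are immediate. For positive definiteness the only subtle point arises when $\mu\not\equiv 0$: if $\|u\|_{\alpha,\mu}=0$, the $L^2(\Omega)$ term forces $u\equiv 0$ a.e.\ in $\Omega$, and then $\int_{(0,1)}[u]_s^2\,d\mu(s)=0$ combined with the nontriviality of $\mu$ yields, for some $s$ in the support of $\mu$, that $u(x)=u(y)$ for a.e.\ $(x,y)\in\Q$; together with $u\equiv 0$ on $\Omega$, this forces $u\equiv 0$ a.e.\ in $\R^N$. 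The case $\mu\equiv 0$ reduces to $H^1(\Omega)$, for which the claim is classical.

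For completeness, I would take a Cauchy sequence $(u_n)$ in $\mathcal{H}_{\alpha,\mu}(\Omega)$. Reading off the nonnegative terms in \eqref{normaalfamu}, $(u_n)$ is Cauchy in each of $L^2(\Omega)$, the weighted spaces $L^2(\partial\Omega,|h|\,d\mathscr{H}^{N-1})$ and $L^2(\R^N\setminus\Omega,|g|\,dx)$, $H^1(\Omega)$ (when $\alpha\neq 0$), and, under the map $u\mapsto\delta u(x,y):=u(x)-u(y)$, the Hilbert space $W:=L^2\!\left(\Q\times(0,1),\,\tfrac{c_{N,s}}{2|x-y|^{N+2s}}\,dx\,dy\,d\mu(s)\right)$. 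Each is complete, so I obtain a limit in every piece; the key task is to realize all these limits as traces, restrictions, gradients and differences of a single measurable function $u\colon\R^N\to\R$. To this end, $L^2(\Omega)$-completeness gives $u$ on $\Omega$ and a subsequence $u_{n_k}$ converging pointwise a.e.\ in $\Omega$; a further subsequence makes $\delta u_{n_k}$ converge a.e.\ on $\Q\times(0,1)$ to the $W$-limit $G$. Fixing (via Fubini) a base point $y_0\in\Omega$ at which $u_{n_k}(y_0)\to u(y_0)$ and $\delta u_{n_k}(\cdot,y_0,s)$ converges a.e., I extend $u$ to $\R^N\setminus\overline\Omega$ as the pointwise limit, which equals $u(y_0)+G(x,y_0,s_0)$ for any $s_0$ in the essential support of $\mu$ (the independence of $s_0$ follows since both expressions agree with $\lim_k u_{n_k}(x)$); the extension is measurable and compatible with the $L^2(\R^N\setminus\Omega,|g|\,dx)$-limit on $\{|g|>0\}$.

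Finally, I would check $u\in\mathcal{H}_{\alpha,\mu}(\Omega)$ and $u_n\to u$ in $\|\cdot\|_{\alpha,\mu}$ by a termwise Fatou argument applied to $\|u-u_n\|_{\alpha,\mu}^2$: for instance,
\[
\int_{(0,1)}[u-u_n]_s^2\,d\mu(s)\;\le\;\liminf_{k\to\infty}\int_{(0,1)}[u_{n_k}-u_n]_s^2\,d\mu(s),
\]
which is smaller than any prescribed $\e$ for $n$ large by the Cauchy property, and analogously for the four remaining nonnegative terms (using weak lower semicontinuity of $\|\nabla\cdot\|_{L^2(\Omega)}$ for the $\alpha$-term). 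The main obstacle is precisely the construction of $u$ on $\R^N\setminus\overline\Omega$, since the norm controls $u$ there only through the weight $|g|$ (which may vanish on a large set) together with the nonlocal Gagliardo seminorm; anchoring the pointwise limit at a base point $y_0\in\Omega$ via the seminorm is what makes the extension canonical and independent of $s_0$.
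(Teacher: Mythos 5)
Your proposal is correct and follows essentially the same route as the paper: the positive-definiteness argument is identical, and your completeness argument (Cauchy in $L^2(\Omega)$ and in the weighted difference space, a.e.\ convergent subsequences, a Fubini-selected base point in $\Omega$ to extend the limit to $\R^N\setminus\overline\Omega$ via the difference function and check independence of $s$, then Fatou) is precisely the argument the paper gives for $\mathcal H_\mu(\Omega)$ in Appendix~\ref{appendixHilbert}, from which the paper deduces the case of $\mathcal H_{\alpha,\mu}(\Omega)$.
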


\begin{proof}
We observe that
\begin{equation}\label{Hmuhilbert}
(\mathcal{H}_\mu(\Omega), \|\cdot\|_{\mu}) \mbox{ is a Hilbert space.}
\end{equation}
(see Appendix \ref{appendixHilbert}).
Hence, the completeness of $\mathcal{H}_{\alpha,\mu}(\Omega)$ follows from its
definition in~\eqref{HALPHAMU} and~\ref{Hmuhilbert}.

Accordingly, observing that~\eqref{prodottoscalarealfamu} is a bilinear form and~$\|u\|_{\alpha,\mu}=(u,u)_{\alpha,\mu}^{1/2}$, it only remains to show that if~$\|u\|_{\alpha,\mu}=0$, then~$u=0$ a.e. in~$\R^N$ (or, if~$\mu\equiv 0$, in~$\Omega$).

To this aim, we observe that, if $\|u\|_{\alpha,\mu}=0$, then~$\|u\|_{L^2(\Omega)}=0$, and thus~$u=0$ a.e. in~$\Omega$. Hence, if~$\mu\equiv 0$, the proof is concluded. Otherwise, we also have
\[
\int_{(0,1)} \left(c_{N,s}\iint_\Q \frac{|u(x)-u(y)|^2}{|x-y|^{N+2s}}\,dx\,dy\right)\,d\mu(s) =0,
\]
which gives
\[
\iint_\Q \frac{|u(x)-u(y)|^2}{|x-y|^{N+2s}}\,dx\,dy =0 \quad\mbox{ for any } s\in\mbox{supp}(\mu).
\]
Hence $|u(x)-u(y)|=0$ for any $(x, y)\in\Q$. In particular, for a.e. $x\in \R^n\setminus\Omega$ and $y\in\Omega$, we get
\[
u(x) = u(x) -u(y)=0,
\]
which means that $u=0$ for a.e. $x\in\R^N$. This concludes the proof.
\end{proof}

As a next step, we aim to prove an embedding result for the spaces $\mathcal{H}_\mu(\Omega)$ and $\mathcal{H}_{\alpha,\mu}(\Omega)$. 

We notice that, since $\mu$ is nontrivial,  there exists $s_\sharp\in (0, 1)$ such that
\begin{equation}\label{ssharp}
\mu([s_\sharp, 1))>0.
\end{equation}
We will show that the space $\mathcal H_\mu(\Omega)$ is continuously embedded in $L^{2^*_{s_\sharp}}(\Omega)$, with\footnote{We remark that some arbitrariness is allowed in the choice of $s_\sharp$ in~\eqref{ASHP}, hence the results obtained will be stronger if one picks $s_\sharp$ ``as large as possible" but still verifying~\eqref{ssharp}.}
\begin{equation}\label{ASHP}2^*_{s_\sharp}:=\frac{2N}{N-2s_\sharp}.\end{equation}

For this, we provide a preliminary result.

\begin{lemma}\label{lemmaGG2.1}
Let $\underline s\in (0, 1)$. Let also $\underline s\le s_1\le s_2 < 1$ and $\Omega$ be an open subset of $\R^N$. Let~$u:\Omega\to \R$ be a measurable function. Then,
\[
\|u\|_{L^2(\Omega)}^2 +\iint_{\Omega\times \Omega}\frac{|u(x)-u(y)|^2}{|x-y|^{N+2s_1}}\,dx\,dy\le c\left(\|u\|_{L^2(\Omega)}^2 +\iint_{\Omega\times \Omega}\frac{|u(x)-u(y)|^2}{|x-y|^{N+2s_2}}\,dx\,dy\right),
\]
where $c=c(N, \underline s)\ge 1$.

Explicitly, one can take
\[
c(N, \underline s):= 1+\frac{2 \,\omega_{N-1}}{\underline s},
\]
where $\omega_{N-1}$ denotes the measure of the unit ball in $\R^{N}$.
\end{lemma}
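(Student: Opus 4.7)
The strategy is to split the integration domain $\Omega\times\Omega$ according to whether $|x-y|<1$ or $|x-y|\ge 1$, and control each piece by a different term on the right-hand side. The reason this works is that for $|x-y|<1$ the kernel with the higher exponent $N+2s_2$ dominates pointwise, whereas for $|x-y|\ge 1$ the opposite is true, and in the latter regime the kernel $|x-y|^{-(N+2s_1)}$ is integrable in $y$ (away from $x$), so that piece can be absorbed into the $L^2$-norm of $u$.

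On the short-range set, since $s_1\le s_2$, the elementary inequality $|x-y|^{-(N+2s_1)}\le |x-y|^{-(N+2s_2)}$ holds pointwise, so this contribution is bounded by the full $s_2$-Gagliardo double integral over $\Omega\times\Omega$. On the long-range set, I would apply the pointwise bound $|u(x)-u(y)|^2\le 2|u(x)|^2+2|u(y)|^2$, use Fubini together with the $(x,y)$-symmetry to reduce matters to estimating
\[
\int_\Omega |u(x)|^2 \int_{\{y\in\Omega\,:\,|y-x|\ge 1\}}\frac{dy}{|x-y|^{N+2s_1}}\,dx,
\]
and then bound the inner integral by $\int_{|z|\ge 1}|z|^{-(N+2s_1)}\,dz$; passing to polar coordinates yields $\omega_{N-1}/(2s_1)\le \omega_{N-1}/(2\underline s)$. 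Gathering the multiplicative constants coming from the elementary inequality and the symmetrization, the long-range piece is bounded by $(2\omega_{N-1}/\underline s)\|u\|_{L^2(\Omega)}^2$.

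Combining the two estimates and adding $\|u\|_{L^2(\Omega)}^2$ to both sides produces exactly the claim with $c(N,\underline s)=1+2\omega_{N-1}/\underline s$. There is no real obstacle in this argument; the only subtlety is bookkeeping the constants so that the explicit value advertised in the statement comes out correctly, which requires replacing $s_1$ by $\underline s$ only at the very last step (so that the estimate $1/s_1\le 1/\underline s$ is used exactly once, in the long-range piece) and keeping the short-range piece free of any multiplicative factor beyond $1$.
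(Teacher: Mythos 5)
Your proposal follows exactly the paper's argument: the same split of $\Omega\times\Omega$ into $\{|x-y|<1\}$ and $\{|x-y|\ge 1\}$, the same pointwise kernel comparison on the short-range set, and the same absorption of the long-range piece into $\|u\|_{L^2(\Omega)}^2$ via $|u(x)-u(y)|^2\le 2|u(x)|^2+2|u(y)|^2$ and the tail integral $\int_{|z|\ge 1}|z|^{-(N+2s_1)}\,dz$, yielding the identical constant $1+2\omega_{N-1}/\underline s$. It is correct and requires no changes.
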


\begin{proof}
We write
\[
\begin{split}
&\iint_{\Omega\times \Omega}\frac{|u(x)-u(y)|^2}{|x-y|^{N+2s_1}}\,dx\,dy\\
&\qquad = \iint_{(\Omega\times\Omega)\cap\{|x-y|< 1\}}\frac{|u(x)-u(y)|^2}{|x-y|^{N+2s_1}}\,dx\,dy + \iint_{(\Omega\times\Omega)\cap\{|x-y|\geq 1\}}\frac{|u(x)-u(y)|^2}{|x-y|^{N+2s_1}}\,dx\,dy.
\end{split}
\]
Notice that, since $s_1 \le s_2$, we have
\begin{equation}\label{BOH1}
\begin{split}
\iint_{(\Omega\times\Omega)\cap\{|x-y|< 1\}}\frac{|u(x)-u(y)|^2}{|x-y|^{N+2s_1}}\,dx\,dy &\le \iint_{(\Omega\times\Omega)\cap\{|x-y|< 1\}}\frac{|u(x)-u(y)|^2}{|x-y|^{N+2s_2}}\,dx\,dy\\
&\le \iint_{\Omega\times\Omega}\frac{|u(x)-u(y)|^2}{|x-y|^{N+2s_2}}\,dx\,dy.
\end{split}
\end{equation}
Moreover, since $N+2s_1>N$ and $s_1\ge \underline{s}>0$, we have
\begin{equation}\label{BOH2}
\begin{split}
&\iint_{(\Omega\times\Omega)\cap\{|x-y|\ge 1\}}\frac{|u(x)-u(y)|^2}{|x-y|^{N+2s_1}}\,dx\,dy\\
& \le 2\left(\int_{\Omega}|u(x)|^2 dx \int_{|z|\ge 1}\frac{dz}{|z|^{N+2s_1}} + \int_{\Omega}|u(y)|^2 dy \int_{|z|\ge 1}\frac{dz}{|z|^{N+2s_1}}\right)\le \frac{2\, \omega_{N-1}}{\underline s}\|u\|^2_{L^2(\Omega)}.
\end{split}
\end{equation}
Thus,  combining~\eqref{BOH1} and~\eqref{BOH2}, we have
\[
\begin{split}
\|u\|_{L^2(\Omega)}^2 +\iint_{\Omega\times \Omega}\frac{|u(x)-u(y)|^2}{|x-y|^{N+2s_1}}\,dx\,dy &\le \left(1+\frac{2\, \omega_{N-1}}{\underline s} \right)\|u\|^2_{L^2(\Omega)} + \iint_{\Omega\times\Omega}\frac{|u(x)-u(y)|^2}{|x-y|^{N+2s_2}}\,dx\,dy\\
&\le c\left(\|u\|_{L^2(\Omega)}^2 +\iint_{\Omega\times \Omega}\frac{|u(x)-u(y)|^2}{|x-y|^{N+2s_2}}\,dx\,dy\right),
\end{split}
\]
as desired.
\end{proof}

Thanks to Lemma~\ref{lemmaGG2.1}, we are able to prove an embedding result for the space~$\mathcal H_\mu(\Omega)$ defined in~\eqref{Hmu}.

\begin{proposition}\label{propcompactembedding}
Let~$\Omega$ be a bounded, open set in~$\R^N$ with Lipschitz boundary. 
Let $s_\sharp$ be as in~\eqref{ssharp}. Then, the space $\mathcal{H}_\mu(\Omega)$ is continuously embedded in~$H^{s_\sharp}(\Omega)$.

In particular, $\mathcal{H}_\mu(\Omega)$ is compactly embedded in~$L^q(\Omega)$ for any~$q\in [1,2^*_{s_\sharp})$.
\end{proposition}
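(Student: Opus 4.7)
The plan is to leverage Lemma~\ref{lemmaGG2.1} to compare the $H^{s_\sharp}(\Omega)$ seminorm of~$u$ with higher-order seminorms encoded in $\|u\|_\mu$, and then derive the compactness statement from the classical fractional Rellich--Kondrachov theorem.

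First, I would exploit the assumption $\mu([s_\sharp,1))>0$: by the monotone continuity of $t\mapsto \mu([s_\sharp,t])$, there exists $t_\sharp\in(s_\sharp,1)$ with $\mu([s_\sharp,t_\sharp])>0$. On the compact interval $[s_\sharp,t_\sharp]\subset(0,1)$ the continuous positive function $s\mapsto c_{N,s}$ admits a strictly positive minimum $c_1>0$.

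Second, for every $s\in[s_\sharp,t_\sharp]$, I would apply Lemma~\ref{lemmaGG2.1} with $\underline{s}=s_\sharp$, $s_1=s_\sharp$ and $s_2=s$; the constant $c=c(N,s_\sharp)$ produced by the lemma is independent of~$s$. Integrating both sides against $d\mu(s)$ over $[s_\sharp,t_\sharp]$, the left-hand side (being independent of $s$) acquires the positive factor $\mu([s_\sharp,t_\sharp])$. For the right-hand side I would use the trivial inclusion $\Omega\times\Omega\subset\Q$ together with the uniform lower bound $c_{N,s}\ge c_1$, so that
\[
\iint_{\Omega\times\Omega}\frac{|u(x)-u(y)|^2}{|x-y|^{N+2s}}\,dx\,dy \le \frac{[u]_s^2}{c_{N,s}} \le \frac{[u]_s^2}{c_1}\qquad\text{for every } s\in[s_\sharp,t_\sharp].
\]
Dividing by $\mu([s_\sharp,t_\sharp])>0$ would yield
\[
\|u\|^2_{H^{s_\sharp}(\Omega)} \le C\left(\|u\|^2_{L^2(\Omega)}+\int_{(0,1)}[u]_s^2\,d\mu(s)\right)\le 2C\,\|u\|_\mu^2,
\]
for a constant $C$ depending only on $N$, $s_\sharp$, $c_1$ and $\mu([s_\sharp,t_\sharp])$, which is the announced continuous embedding $\mathcal{H}_\mu(\Omega)\hookrightarrow H^{s_\sharp}(\Omega)$.

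Finally, the compact embedding $\mathcal{H}_\mu(\Omega)\hookrightarrow L^q(\Omega)$ for $q\in[1,2^*_{s_\sharp})$ would follow by composing the continuous embedding above with the classical compactness $H^{s_\sharp}(\Omega)\hookrightarrow L^q(\Omega)$ on bounded Lipschitz domains. The main subtlety is the choice of $t_\sharp$: since $c_{N,s}$ degenerates as $s\to 1$, integrating Lemma~\ref{lemmaGG2.1} over the whole of $[s_\sharp,1)$ and dividing by $c_{N,s}$ would fail. Restricting to a compact subinterval on which $\mu$ still carries positive mass and $c_{N,s}$ stays uniformly bounded below is the exact role played by the hypothesis $\mu([s_\sharp,1))>0$.
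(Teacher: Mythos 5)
Your proposal is correct and follows essentially the same route as the paper: both arguments restrict to a compact subinterval of $[s_\sharp,1)$ carrying positive $\mu$-mass (so that $c_{N,s}$ stays bounded below), apply Lemma~\ref{lemmaGG2.1} with $s_1=s_\sharp$ and $s_2=s$ there, integrate in $d\mu(s)$, and conclude the compactness via the classical fractional Rellich--Kondrachov theorem. Your justification of the existence of the subinterval via continuity from below of $t\mapsto\mu([s_\sharp,t])$ makes explicit a step the paper only asserts ``without loss of generality.''
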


\begin{proof}
We observe that 
\begin{equation}\label{immersione1}
\|u\|_{L^2(\Omega)}^2 +\int_{(0,1)} \frac{c_{N, s}}{2}\iint_{\Omega\times \Omega}
\frac{|u(x)-u(y)|^2}{|x-y|^{N+2s}}\,dx\,dy\,d\mu(s)
\leq \|u\|_\mu^2,
\end{equation}
thanks to~\eqref{normamu}.
Then, we use Lemma~\ref{lemmaGG2.1} with~$s_1:=s_\sharp$ and~$s_2:=s\in [s_\sharp,1)$. By~\eqref{ssharp}, without loss of generality we can assume that there exists~$\delta>0$ such that~$\mu([s_{\sharp}, 1-\delta])>0$.  
We set
\[
m:= \min_{s\in [s_\sharp, 1-\delta]}\left\{\frac{1}{\mu ([s_\sharp, 1-\delta])}, \frac{c_{N, s}}{2} \right\}.
\]
{F}rom this, we have
\begin{equation}\label{immersione2}
\begin{aligned}
\|u\|_{L^2(\Omega)}^2 &+\int_{(0,1)}\frac{c_{N, s}}{2}\iint_{\Omega\times \Omega}\frac{|u(x)-u(y)|^2}{|x-y|^{N+2s}}\,dx\,dy\,d\mu(s)\\
&\ge \|u\|_{L^2(\Omega)}^2 +\int_{[s_\sharp,1-\delta]}\frac{c_{N, s}}{2}\iint_{\Omega\times \Omega}
\frac{|u(x)-u(y)|^2}{|x-y|^{N+2s}}\,dx\,dy\,d\mu(s)\\
&=\int_{[s_\sharp,1-\delta]} \left(\frac{\|u\|_{L^2(\Omega)}^2}{\mu ([s_\sharp, 1-\delta])} + \frac{c_{N, s}}{2} \iint_{\Omega\times \Omega} \frac{|u(x)-u(y)|^2}{|x-y|^{N+2s}}\,dx\,dy \right) d\mu(s)\\
&\ge m \int_{[s_\sharp, 1-\delta])} \left( \|u\|_{L^2(\Omega)}^2 + \iint_{\Omega\times \Omega}\frac{|u(x)-u(y)|^2}{|x-y|^{N+2s}}\,dx\,dy\right) d\mu(s)\\
&\ge \frac{m}{C(N, s_\sharp)} \int_{[s_\sharp,1-\delta]} \left( \|u\|_{L^2(\Omega)}^2 + \iint_{\Omega\times \Omega}\frac{|u(x)-u(y)|^2}{|x-y|^{N+2s_\sharp}}\,dx\,dy\right) d\mu(s)\\
&\geq C_1 \|u\|_{H^{s_\sharp}(\Omega)}^2,
\end{aligned}
\end{equation}
for some $C_1 = C_1 (N, s_\sharp, \delta)>0$. 

Now, combining inequalities~\eqref{immersione1} and~\eqref{immersione2} we arrive at
\[
\|u\|_{H^{s_\sharp}(\Omega)}^2\leq C \|u\|_\mu^2,
\]
up to renaming the constant $C$.

This proves that~$\mathcal{H}_\mu(\Omega)$ is continuously embedded in~$H^{s_\sharp}(\Omega)$, as claimed.  Finally, the compact embedding follows from~\cite[Corollary 7.2]{MR2944369} and this concludes the proof.
\end{proof}

As a consequence of Proposition~\ref{propcompactembedding}, we derive a suitable compact embedding result for the space $\mathcal H_{\alpha, \mu}(\Omega)$ as well.

\begin{corollary}\label{IMMERSIONEALPHAMU}
Let~$\Omega$ be a bounded, open set in~$\R^N$ with Lipschitz boundary.
Assume that $\alpha\neq 0$. Then, the space~$\mathcal H_{\alpha, \mu}(\Omega)$ is continuously embedded in~$H^1(\Omega)$. 

In particular, we have that~$\mathcal H_{\alpha, \mu}(\Omega)$ is compactly embedded in~$L^q(\Omega)$ for any~$q\in [1,2^*)$, where
\[
2^*:=\frac{2N}{N-2}.
\]
\end{corollary}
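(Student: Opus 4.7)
The plan is to reduce the statement to the classical Rellich--Kondrachov theorem via a direct norm comparison, exploiting the fact that when $\alpha \neq 0$ the term $\alpha\,\|\nabla u\|_{L^2(\Omega)}^2$ in the $\|\cdot\|_{\alpha,\mu}$ norm already controls the full gradient part of the $H^1$ norm.

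First I would observe that, by the definition in \eqref{HALPHAMU}, when $\alpha \neq 0$ the space $\mathcal{H}_{\alpha,\mu}(\Omega)$ is contained in $H^1(\Omega)$ as a set: if $\mu \equiv 0$ this is the definition itself, while if $\mu \not\equiv 0$ the space is the intersection $H^1(\Omega) \cap \mathcal{H}_\mu(\Omega)$. So the inclusion is automatic.

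Next, I would verify continuity of the inclusion. Since $\alpha>0$, from the explicit expression of the norm in \eqref{normaalfamu} one obtains
\[
\|u\|_{\alpha,\mu}^2 \;\geq\; \|u\|_{L^2(\Omega)}^2 + \alpha\,\|\nabla u\|_{L^2(\Omega)}^2 \;\geq\; \min\{1,\alpha\}\,\|u\|_{H^1(\Omega)}^2,
\]
so that $\|u\|_{H^1(\Omega)} \leq (\min\{1,\alpha\})^{-1/2}\,\|u\|_{\alpha,\mu}$. This proves the continuous embedding of $\mathcal{H}_{\alpha,\mu}(\Omega)$ into $H^1(\Omega)$. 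The compact embedding into $L^q(\Omega)$ for $q \in [1,2^*)$ then follows by composing this continuous embedding with the classical Rellich--Kondrachov theorem applied to $H^1(\Omega)$, which is available because $\Omega$ is bounded with Lipschitz boundary.

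In contrast to Proposition \ref{propcompactembedding}, no real obstacle is expected here: one does not need to compare fractional seminorms of different orders, since the local term $\alpha\,\|\nabla u\|_{L^2(\Omega)}^2$ already dominates. The only thing worth noticing is that the constant $(\min\{1,\alpha\})^{-1/2}$ blows up as $\alpha \searrow 0$, consistently with the fact that in the limit $\alpha = 0$ one must fall back on Proposition \ref{propcompactembedding}, which gives only the weaker target space $H^{s_\sharp}(\Omega)$.
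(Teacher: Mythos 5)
Your proof is correct and follows essentially the same route as the paper: a direct comparison of the norms in \eqref{normaalfamu} showing $\|u\|_{H^1(\Omega)}\le C\|u\|_{\alpha,\mu}$, followed by the classical Rellich--Kondrachov theorem for the compact embedding. Your version is in fact slightly more detailed (explicit constant, explicit set inclusion from \eqref{HALPHAMU}) than the paper's one-line argument.
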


\begin{proof}
Let $\alpha\neq 0$. We observe that, by the definition of the norm
in~\eqref{normaalfamu}, if~$u\in\mathcal H_{\alpha, \mu}(\Omega)$, then
\[
\|u\|_{H^1(\Omega)} \le C \|u\|_{\alpha, \mu},
\]
for a suitable constant $C>0$.
\end{proof}

\section{Integration by parts formulas and minimization property}\label{JOSDNKDJO-1}

In this section, we will prove some auxiliary formulas which will be useful in the forthcoming results. 

To start, we give the analogue of the divergence theorem. We point out that the assumptions required in the next two results are significantly more general than the ones required in \cite[Lemma 3.2 and Lemma 3.3]{MR3651008}.

\begin{lemma}\label{lemmathdivergenza}
Let $u:\R^N\to \R$ 
be such that $u\in C^2(\Omega)$ and 
\begin{equation}\label{ipotesidivergenza1}
(-\Delta)^su \in L^1(\Omega \times (0,1)).
\end{equation}
Assume that the function
\begin{equation}\label{ipotesidivergenza2}
\Omega\times(\R^N\setminus \Omega)\times (0,1)\ni (x,y,s)
\mapsto c_{N,s}\frac{u(x)-u(y)}{|x-y|^{N+2s}}
\mbox{ belongs to }
L^1(\Omega \times(\R^N\setminus \Omega) \times (0,1)).
\end{equation}

Then,
\begin{equation}\label{tesidivergenza}
\int_{(0, 1)}\int_\Omega (-\Delta)^su(x)\,dx\,d\mu(s)
=\int_{(0, 1)}\int_{\R^N\setminus \Omega}\Ns u(x)\,dx\,d\mu(s).
\end{equation}
\end{lemma}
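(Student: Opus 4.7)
The plan is to reduce to the pointwise-in-$s$ divergence identity
\[
\int_\Omega (-\Delta)^s u(x)\,dx = \int_{\R^N\setminus\Omega} \Ns u(x)\,dx,
\]
valid for every fixed $s \in (0,1)$, and then to integrate against $d\mu(s)$, justifying the necessary swaps of integration order by Fubini--Tonelli with the two integrability hypotheses~\eqref{ipotesidivergenza1} and~\eqref{ipotesidivergenza2}. This parallels the strategy of~\cite[Lemmata~3.2 and~3.3]{MR3651008}, adapted to the strictly weaker set of hypotheses available here.

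For the single-$s$ identity, I would first rewrite $(-\Delta)^s u(x)$ in its equivalent principal-value form, and split the inner integration in $y$ as $\R^N = \Omega \cup (\R^N \setminus \Omega)$. The $\Omega \times \Omega$ contribution vanishes by antisymmetry: truncating at $\{|x-y| > \e\}$ removes the singularity and allows an unrestricted application of Fubini, after which the integrand is antisymmetric under $x \leftrightarrow y$ and the truncated double integral is identically zero. The $C^2$ regularity of $u$ on $\Omega$ then allows passage to the limit $\e \to 0$ via the standard cancellation of the linear Taylor term of $u$ near the diagonal. The $\Omega \times (\R^N \setminus \Omega)$ contribution is absolutely convergent by~\eqref{ipotesidivergenza2}, so Fubini swaps the order of integration and, after relabelling $x$ and $y$, the inner integral is recognized as $\Ns u$ evaluated at a point of $\R^N \setminus \Omega$.

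Integrating the identity just produced against $d\mu(s)$ finishes the argument. The left-hand iterated integral in~\eqref{tesidivergenza} is legitimate by~\eqref{ipotesidivergenza1}, while the right-hand side, regarded as the triple integral
\[
\int_{(0,1)} \int_{\R^N \setminus \Omega} \int_\Omega c_{N,s}\,\frac{u(x) - u(y)}{|x-y|^{N+2s}}\,dy\,dx\,d\mu(s),
\]
is absolutely convergent by~\eqref{ipotesidivergenza2}, so Fubini identifies it with the iterated form appearing on the right of~\eqref{tesidivergenza}.

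I expect the main obstacle to be the rigorous treatment of the $\Omega \times \Omega$ piece: the kernel fails to be in $L^1$ near the diagonal, so the principal-value cancellation must be executed carefully, with the $C^2$ assumption on $u$ in $\Omega$ providing exactly the regularity needed to make the limiting procedure rigorous. Everything else reduces to bookkeeping with Fubini--Tonelli driven by the two stated integrability hypotheses.
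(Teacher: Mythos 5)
Your proposal is correct and follows essentially the same route as the paper: the $\Omega\times\Omega$ contribution is killed by the antisymmetry of the kernel under $x\leftrightarrow y$, the cross term $\Omega\times(\R^N\setminus\Omega)$ is handled by Fubini via hypothesis~\eqref{ipotesidivergenza2}, and the integration against $d\mu(s)$ is justified by~\eqref{ipotesidivergenza1}. If anything, your explicit truncation-and-limit treatment of the principal value on the diagonal is more careful than the paper's, which simply invokes the symmetry of the roles of $x$ and $y$.
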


\begin{proof}
We first point out that the assumptions~\eqref{ipotesidivergenza1}
and~\eqref{ipotesidivergenza2} give that the integrals in 
\eqref{tesidivergenza} are finite.

Now, we observe that
\[
\int_\Omega\int_\Omega \frac{u(x)-u(y)}{|x-y|^{N+2s}}\,dx\,dy
=\int_\Omega\int_\Omega \frac{u(y)-u(x)}{|x-y|^{N+2s}}\,dx\,dy=0,
\]
since the role of $x$ and $y$ is symmetric. 

This implies that
\[
\int_{(0, 1)}c_{N,s}\int_\Omega\int_\Omega \frac{u(x)-u(y)}{|x-y|^{N+2s}}\,dx\,dy \,d\mu(s)=0.
\]
Hence, recalling the definitions in~\eqref{deflaplacianofrazionario} and
\eqref{defNsu}, we can use~\eqref{ipotesidivergenza2} to 
exchange the order of integration and obtain
\[
\begin{aligned}
\int_{(0, 1)}\int_\Omega (-\Delta)^su(x)\,dx\,d\mu(s)&=\int_{(0, 1)}c_{N,s}\int_\Omega \int_{\R^N}
\frac{u(x)-u(y)}{|x-y|^{N+2s}}\,dydx \,d\mu(s) \\
&=\int_{(0, 1)}c_{N,s}\int_\Omega \int_{\R^N\setminus \Omega}\frac{u(x)-u(y)}{|x-y|^{N+2s}}\,dydx \,d\mu(s) \\
&=\int_{(0, 1)}c_{N,s} \int_{\R^N\setminus \Omega}\int_\Omega\frac{u(x)-u(y)}{|x-y|^{N+2s}}\,dx\,dy \,d\mu(s) \\
&=-\int_{(0, 1)}\int_{\R^N\setminus \Omega}\Ns u(y)\,dy\,d\mu(s),
\end{aligned}
\]
as desired.
\end{proof}

We have the following integration by parts formula.

\begin{lemma}\label{lemmaperparti}
Let $u,v:\R^N\to \R$ be such that
$u,v\in C^2(\Omega)$ and
\begin{equation}\label{ipotesiperparti1}
\Q\times(0,1)\ni (x,y,s) \mapsto 
c_{N,s}\frac{(u(x)-u(y))(v(x)-v(y))}{|x-y|^{N+2s}}
\mbox{ belongs to } L^1(\Q\times(0,1)).
\end{equation}
Assume that
\begin{equation}\label{ipotesiperparti2}
(-\Delta)^su v\in L^1(\Omega\times (0,1))
\end{equation}
and that the function
\begin{equation}\label{ipotesiperparti3}\begin{split}
&(\R^N\setminus \Omega)\times \Omega \times (0,1)\ni (x,y,s)
\mapsto c_{N,s}\frac{(u(x)-u(y))v(x)}{|x-y|^{N+2s}}\\&
\mbox{belongs to }
L^1((\R^N\setminus \Omega)\times \Omega \times(0,1)).\end{split}
\end{equation}

Then,
\begin{equation}\label{tesiperparti}
\begin{aligned}
\frac{1}{2}\int_{(0, 1)}c_{N,s}&\iint_\Q\frac{(u(x)-u(y))(v(x)-v(y))}{|x-y|^{N+2s}}\,dx\,dy\,d\mu(s) \\
&=\int_{(0, 1)} \int_\Omega v(x)(-\Delta)^su(x)\,dx\,d\mu(s)+\int_{(0, 1)}\int_{\R^N\setminus \Omega}v(x)\Ns u(x)\,dx \,d\mu(s)
\end{aligned}
\end{equation}
where $c_{N,s}$ is the constant in~\eqref{deflaplacianofrazionario}.
\end{lemma}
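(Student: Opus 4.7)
My plan is to prove~\eqref{tesiperparti} in two stages. First I would establish the classical integration-by-parts identity at the level of a single $s\in(0,1)$ (the analogue of~\eqref{tesiperparti} in which $d\mu$ is replaced by $\delta_s$), which is essentially~\cite[Lemma~3.3]{MR3651008}; then I would integrate that identity against $d\mu(s)$ by Fubini--Tonelli. The three integrability hypotheses~\eqref{ipotesiperparti1}--\eqref{ipotesiperparti3} are precisely what licenses every swap of order of integration: \eqref{ipotesiperparti1} controls the left-hand side,~\eqref{ipotesiperparti2} controls the term with $(-\Delta)^s u$, and~\eqref{ipotesiperparti3} controls the nonlocal Neumann term.

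The central step for the $s$-fixed identity is the standard symmetrisation. Since both $\Q$ and $|x-y|^{-N-2s}$ are invariant under $(x,y)\leftrightarrow (y,x)$, the expansion
\[
(u(x)-u(y))(v(x)-v(y))=v(x)\bigl(u(x)-u(y)\bigr)+v(y)\bigl(u(y)-u(x)\bigr),
\]
together with a relabelling of variables in the second term, yields
\[
\iint_{\Q}\frac{(u(x)-u(y))(v(x)-v(y))}{|x-y|^{N+2s}}\,dx\,dy
= 2\iint_{\Q}\frac{v(x)\bigl(u(x)-u(y)\bigr)}{|x-y|^{N+2s}}\,dx\,dy.
\]
Writing $\Q = (\Omega\times\R^N)\sqcup((\R^N\setminus\Omega)\times\Omega)$ and splitting the integral accordingly, on the $(\R^N\setminus\Omega)\times\Omega$ piece Fubini (allowed by~\eqref{ipotesiperparti3}) immediately produces $\int_{\R^N\setminus\Omega}v(x)\Ns u(x)\,dx$ via the defining formula~\eqref{defNsu}. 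On the $\Omega\times\R^N$ piece, performing the inner $y$-integration in symmetrised form (via $y\mapsto 2x-y$, i.e.\ pairing the contributions at $y$ and at $2x-y$) yields exactly the integrand in~\eqref{deflaplacianofrazionario} and hence the term $\int_\Omega v(x)(-\Delta)^s u(x)\,dx$.

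The main obstacle is that, on $\Omega\times\R^N$, the one-sided kernel $(u(x)-u(y))/|x-y|^{N+2s}$ is only conditionally convergent near the diagonal when $s\geq 1/2$, so Fubini cannot be applied directly as written. I would handle this in the standard way: excise the strip $\{|x-y|<\e\}$, apply Fubini on the complement (where absolute integrability is free), and then pass to the limit $\e\to 0^+$ using the $C^2$-Taylor estimate $2u(x)-u(x+z)-u(x-z)=O(|z|^2)$, which is precisely what collapses the symmetric difference into the absolutely convergent formulation~\eqref{deflaplacianofrazionario} for $(-\Delta)^s u$. Once the $s$-fixed identity is established, a final integration against $d\mu(s)$, combined with Fubini--Tonelli and again justified by~\eqref{ipotesiperparti1}--\eqref{ipotesiperparti3}, delivers~\eqref{tesiperparti}.
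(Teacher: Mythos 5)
Your proposal is correct and follows essentially the same route as the paper: symmetrise the bilinear form over $\Q$ via the swap $(x,y)\leftrightarrow(y,x)$, split $\Q$ into $\Omega\times\R^N$ and $(\R^N\setminus\Omega)\times\Omega$, identify the two resulting terms with $\int_\Omega v\,(-\Delta)^s u$ and $\int_{\R^N\setminus\Omega}v\,\Ns u$ via~\eqref{deflaplacianofrazionario} and~\eqref{defNsu}, and integrate in $s$ using~\eqref{ipotesiperparti1}--\eqref{ipotesiperparti3}. Your $\e$-excision and $y\mapsto 2x-y$ pairing to handle the conditionally convergent diagonal contribution for $s\ge 1/2$ is a careful justification of a step the paper performs implicitly, not a different approach.
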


\begin{proof}
First, we observe that the assumptions~\eqref{ipotesiperparti1},
\eqref{ipotesiperparti2} and~\eqref{ipotesiperparti3} guarantee 
that the integrals in~\eqref{tesiperparti} are finite.

We notice that 
$\Q=(\Omega\times \Omega)\cup (\Omega \times \R^N\setminus \Omega)\cup ( \R^N\setminus \Omega \times \Omega )$.
Then, we compute
\[
\begin{aligned}
\int_\Omega \int_\Omega &\frac{(u(x)-u(y))(v(x)-v(y))}{|x-y|^{N+2s}}\,dx\,dy \\
&=\int_\Omega \int_\Omega v(x)\frac{u(x)-u(y)}{|x-y|^{N+2s}}\,dx\,dy
-\int_\Omega \int_\Omega v(y)\frac{u(x)-u(y)}{|x-y|^{N+2s}}\,dx\,dy \\
&=2\int_\Omega \int_\Omega v(x)\frac{u(x)-u(y)}{|x-y|^{N+2s}}\,dx\,dy.
\end{aligned}
\]
Moreover,
\[
\begin{aligned}
\int_\Omega \int_{\R^N\setminus\Omega} &\frac{(u(x)-u(y))(v(x)-v(y))}{|x-y|^{N+2s}}\,dx\,dy \\
&=\int_\Omega \int_{\R^N\setminus\Omega} v(x)\frac{u(x)-u(y)}{|x-y|^{N+2s}}\,dx\,dy
-\int_\Omega \int_{\R^N\setminus\Omega} v(y)\frac{u(x)-u(y)}{|x-y|^{N+2s}}\,dx\,dy \\
&=\int_\Omega \int_{\R^N\setminus\Omega} v(x)\frac{u(x)-u(y)}{|x-y|^{N+2s}}\,dx\,dy
+\int_{\R^N\setminus\Omega}\int_\Omega  v(x)\frac{u(x)-u(y)}{|x-y|^{N+2s}}\,dx\,dy,
\end{aligned}
\]
and in a similar way
\[
\begin{aligned}
\int_{\R^N\setminus\Omega} \int_\Omega  &\frac{(u(x)-u(y))(v(x)-v(y))}{|x-y|^{N+2s}}\,dx\,dy \\
&=\int_{\R^N\setminus\Omega} \int_\Omega v(x)\frac{u(x)-u(y)}{|x-y|^{N+2s}}\,dx\,dy
-\int_{\R^N\setminus\Omega} \int_\Omega v(y)\frac{u(x)-u(y)}{|x-y|^{N+2s}}\,dx\,dy \\
&=\int_{\R^N\setminus\Omega} \int_\Omega v(x)\frac{u(x)-u(y)}{|x-y|^{N+2s}}\,dx\,dy
+\int_\Omega \int_{\R^N\setminus\Omega}  v(x)\frac{u(x)-u(y)}{|x-y|^{N+2s}}\,dx\,dy.
\end{aligned}
\]
Combining all the previous identities, we get
\[
\begin{aligned}
\frac{1}{2}\iint_\Q&\frac{(u(x)-u(y))(v(x)-v(y))}{|x-y|^{N+2s}}\,dx\,dy \\
&=\int_\Omega\int_{\R^N}v(x)\frac{u(x)-u(y)}{|x-y|^{N+2s}}\,dx\,dy
+\int_{\R^N\setminus\Omega} \int_\Omega v(x)\frac{u(x)-u(y)}{|x-y|^{N+2s}}\,dx\,dy.
\end{aligned}
\]
Thus, by using~\eqref{deflaplacianofrazionario} and~\eqref{defNsu},
\[
\begin{aligned}
\int_{(0, 1)} \frac{c_{N,s}}{2}&\iint_\Q
\frac{(u(x)-u(y))(v(x)-v(y))}{|x-y|^{N+2s}}\,dx\,dy\,d\mu(s) \\
&=\int_{(0, 1)} \int_\Omega v(x) c_{N,s}
\int_{\R^N}\frac{u(x)-u(y)}{|x-y|^{N+2s}}\,dy \,dx\,d\mu(s) \\
&+\int_{(0, 1)}\int_{\R^N\setminus \Omega}v(x) c_{N,s}
\int_{\Omega}\frac{u(x)-u(y)}{|x-y|^{N+2s}}\,dy \,dx\,d\mu(s)
\\
&=\int_{(0, 1)} \int_\Omega v(x)(-\Delta)^su(x)\,dx\,d\mu(s)
+\int_{(0, 1)}\int_{\R^N\setminus \Omega}v(x)\Ns u(x)\,dx \,d\mu(s).
\end{aligned}
\]
This concludes the proof.
\end{proof}

Now we complete the proof of Theorem~\ref{TH1ojc}.

\begin{proof}[Proof of Theorem~\ref{TH1ojc}]
Without loss of generality, we can suppose that
\[
\int_{(0, 1)}c_{N,s}\iint_\Q \frac{|u(x)-u(y)|^2}{|x-y|^{N+2s}}\,dx\,dy\,d\mu(s)<+\infty,
\]
otherwise we are done.

Now, we observe that 
\begin{equation}\label{seminormaomega^2}
\int_{(0, 1)}c_{N,s}\int_\Omega \int_\Omega
\frac{|\widetilde{u}(x)-\widetilde{u}(y)|^2}{|x-y|^{N+2s}}\,dx\,dy\,d\mu(s)
=\int_{(0, 1)}c_{N,s}\int_\Omega \int_\Omega
\frac{|u(x)-u(y)|^2}{|x-y|^{N+2s}}\,dx\,dy\,d\mu(s),
\end{equation}
so we only need to consider the integral over 
$(\R^N\setminus \Omega)\times \Omega$, or equivalently over 
$\Omega \times (\R^N\setminus \Omega)$.
Setting $\varphi(x):=u(x)-\widetilde{u}(x)$, for every 
$y\in \R^N\setminus \overline{\Omega}$ we have
\begin{equation}\label{seminormavarphi}
\begin{aligned}
\int_{(0, 1)}c_{N,s}\int_\Omega &
\frac{|u(x)-u(y)|^2}{|x-y|^{N+2s}}\,dx\,d\mu(s)
=\int_{(0, 1)}c_{N,s}\int_\Omega
\frac{|u(x)-\widetilde{u}(y)-\varphi(y)|^2}{|x-y|^{N+2s}}\,dx\,d\mu(s)\\
&=\int_{(0, 1)}c_{N,s}\int_\Omega
\frac{|u(x)-\widetilde{u}(y)|^2-2\varphi(y)(u(x)-\widetilde{u}(y))+|\varphi(y)|^2}{|x-y|^{N+2s}}\,dx\,d\mu(s).
\end{aligned}
\end{equation}
We notice that, for every $y\in \R^N\setminus \overline{\Omega}$,
\[
\int_{(0, 1)}c_{N,s}\int_\Omega
\frac{u(x)-\widetilde{u}(y)}{|x-y|^{N+2s}}\,dx\,d\mu(s)
=E_u(y)-\frac{E_u(y)}{E_1(y)}E_1(y)=0.
\]
Therefore we deduce from the identity in~\eqref{seminormavarphi} that
\[
\begin{aligned}
\int_{(0, 1)}c_{N,s}\int_\Omega 
\frac{|u(x)-u(y)|^2}{|x-y|^{N+2s}}\,dx\,d\mu(s)
&=\int_{(0, 1)}c_{N,s}\int_\Omega
\frac{|\widetilde{u}(x)-\widetilde{u}(y)|^2+|\varphi(y)|^2}{|x-y|^{N+2s}}\,dx\,d\mu(s) \\
&\geq \int_{(0, 1)}c_{N,s}\int_\Omega
\frac{|\widetilde{u}(x)-\widetilde{u}(y)|^2}{|x-y|^{N+2s}}\,dx\,d\mu(s)
\end{aligned}
\]
for every $y\in \R^N\setminus \overline{\Omega}$, and the 
equality holds if and only if $\varphi(y)=0$.

Integrating over $\R^N\setminus \Omega$ (or, equivalently, over
$\R^N\setminus \overline{\Omega}$), and using Fubini's theorem,
we conclude that
\[
\int_{(0, 1)}c_{N,s}\int_{\R^N\setminus \Omega}\int_\Omega 
\frac{|u(x)-u(y)|^2}{|x-y|^{N+2s}}\,dx\,dy\,d\mu(s)
\geq \int_{(0, 1)}c_{N,s}\int_{\R^N\setminus \Omega}\int_\Omega
\frac{|\widetilde{u}(x)-\widetilde{u}(y)|^2}{|x-y|^{N+2s}}\,dx\,dy\,d\mu(s),
\]
and the equality holds if and only if $\varphi\equiv 0$ in
$\R^N\setminus \Omega$. This and~\eqref{seminormaomega^2} imply
\eqref{seminormagagliardo<}, as desired.
\end{proof}

\section{Weak solutions with ($\alpha,\mu$)-Neumann conditions}\label{PSJLDSLD-2e0dfjev}

In this section we consider the problem
\begin{equation}\label{prob1}
\begin{cases}
L_{\alpha, \mu}(u) = f \quad\mbox{ in } \Omega,\\
\mbox{with }(\alpha,\mu)\mbox{-Neumann conditions}
\end{cases}
\end{equation}
for some suitable functions $f:\Omega\to\R$, $g:\R^N\setminus\overline{\Omega}\to\R$ and $h:\partial\Omega\to\R$.

We now provide the following definition.
\begin{definition}\label{weakdefn}
Assume that $f\in L^2(\Omega)$, $g\in L^1(\R^N\setminus\overline\Omega)$
and $h\in L^1(\partial\Omega)$. We say that~$u\in \mathcal{H}_{\alpha, \mu}(\Omega)$ is a weak solution of the problem~\eqref{prob1}
if, for any~$v\in  \mathcal{H}_{\alpha, \mu}(\Omega)$,
\begin{equation}\label{weaksolution}
\begin{split}
&\alpha \int_\Omega \nabla u\cdot\nabla v\, dx +\int_{(0, 1)} \frac{c_{N, s}}{2} \iint_{\Q} \frac{(u(x)-u(y))(v(x)-v(y))}{|x-y|^{N+2s}} \,dx\,dy\, d\mu(s)\\
&\quad = \int_\Omega fv \, dx +\int_{\R^N\setminus\Omega} gv \, dx +\int_{\partial\Omega} hv \, d\mathscr{H}_x^{N-1}.
\end{split}
\end{equation}
\end{definition}

We state the following preliminary result, with the proof provided in Appendix \ref{appendixfunctional}.
\begin{proposition}\label{propfunctional}
Assume that $f\in L^2(\Omega)$, $g\in L^1(\R^N\setminus\Omega)$
and $h\in L^1(\partial\Omega)$. Let $I:\mathcal{H}_{\alpha, \mu}(\Omega)\to\R$ be the functional defined as
\[
\begin{split}
I(u):= \frac{\alpha}{2}\int_\Omega |\nabla u|^2 dx &+\int_{(0, 1)} \frac{c_{N, s}}{4} \iint_{\Q} \frac{|u(x)-u(y)|^2}{|x-y|^{N+2s}} dx\,dy\, d\mu(s)\\
&- \int_\Omega fu \, dx -\int_{\R^N\setminus\Omega} gu \, dx -\int_{\partial\Omega} hu \, d\mathscr{H}_x^{N-1}.
\end{split}
\]
Then, critical points of~$I$ are weak solutions of the problem~\eqref{prob1}.
\end{proposition}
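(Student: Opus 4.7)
The plan is the standard variational one: compute the Gateaux derivative of $I$ at $u\in \mathcal{H}_{\alpha,\mu}(\Omega)$ in an arbitrary direction $v\in \mathcal{H}_{\alpha,\mu}(\Omega)$ and recognize that the vanishing condition $\frac{d}{dt}I(u+tv)|_{t=0}=0$ is exactly \eqref{weaksolution}. Since each occurrence of $u$ in $I$ enters at most quadratically, the function $t\mapsto I(u+tv)$ will be a genuine quadratic polynomial
\[
I(u+tv) = I(u) + t\,\Phi(u,v) + t^2\,\Psi(v),
\]
whose coefficient $\Phi(u,v)$ of $t$ collects all the cross terms, and the whole proof reduces to identifying $\Phi(u,v)$.

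I would do this one summand at a time. The gradient part contributes $\alpha\int_\Omega \nabla u\cdot \nabla v\,dx$ from the cross-product in $|\nabla u+t\nabla v|^2$. For the Gagliardo double integral, expanding
\[
|(u(x)+tv(x))-(u(y)+tv(y))|^2 = |u(x)-u(y)|^2 + 2t(u(x)-u(y))(v(x)-v(y)) + t^2|v(x)-v(y)|^2,
\]
multiplying by $c_{N,s}/4$, and integrating over $\mathcal{Q}$ against $|x-y|^{-(N+2s)}\,dx\,dy$ and then against $d\mu(s)$ produces, as the contribution to $\Phi(u,v)$, exactly the nonlocal bilinear form
\[
\int_{(0,1)}\frac{c_{N,s}}{2}\iint_{\mathcal{Q}} \frac{(u(x)-u(y))(v(x)-v(y))}{|x-y|^{N+2s}}\,dx\,dy\,d\mu(s)
\]
appearing in \eqref{weaksolution}. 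The three linear terms in $I$ produce the contribution $-\int_\Omega fv\,dx - \int_{\R^N\setminus\Omega} gv\,dx - \int_{\partial\Omega} hv\,d\mathscr{H}_x^{N-1}$. Thus $\Phi(u,v)$ coincides with the difference between the two sides of \eqref{weaksolution}, and the critical-point condition yields the claim.

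The only point requiring care, and also the main (mild) obstacle, is the absolute convergence of the quantities manipulated, which is what permits differentiating under the $s$-integral. For the bilinear nonlocal form it follows from Cauchy--Schwarz applied in the product measure $\frac{c_{N,s}\,dx\,dy\,d\mu(s)}{|x-y|^{N+2s}}$ on $\mathcal{Q}\times(0,1)$, giving
\[
\left|\int_{(0,1)} c_{N,s}\iint_{\mathcal{Q}} \frac{(u(x)-u(y))(v(x)-v(y))}{|x-y|^{N+2s}}\,dx\,dy\,d\mu(s)\right| \le \left(\int_{(0,1)}[u]_s^2\,d\mu(s)\right)^{\!1/2}\!\left(\int_{(0,1)}[v]_s^2\,d\mu(s)\right)^{\!1/2},
\]
which is finite by definition of $\|\cdot\|_{\alpha,\mu}$ in \eqref{normaalfamu}. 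The three linear functionals in $v$ are controlled by $\|f\|_{L^2(\Omega)}\|v\|_{L^2(\Omega)}$, by $\|\,|g|^{1/2}\|_{L^2(\R^N\setminus\Omega)}\||g|^{1/2}v\|_{L^2(\R^N\setminus\Omega)}$ (since $g\in L^1$), and by the analogous estimate for $h$; in each case the right-hand side is bounded by $\|u\|_{\alpha,\mu}\|v\|_{\alpha,\mu}$, because the relevant weighted $L^2$ norms of $v$ are part of $\|\cdot\|_{\alpha,\mu}$. These estimates also show that $\Psi(v)$ is finite, so the polynomial identity above makes sense and can be differentiated at $t=0$ term by term, concluding the proof.
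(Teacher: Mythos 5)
Your proposal is correct and follows essentially the same route as the paper's proof in Appendix~C: expand $I(u+tv)$ as a quadratic polynomial in $t$, identify the coefficient of $t$ with the bilinear and linear forms in~\eqref{weaksolution}, and justify finiteness of all terms via Cauchy--Schwarz/H\"older against the norm~\eqref{normaalfamu}. No gaps.
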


Next result is a sort of maximum principle and is helpful to prove the existence and uniqueness result in Theorem~\ref{uptoconstant}.

\begin{lemma}\label{lemmasoluzionecostante}
Let $f\in L^2(\Omega)$, $g\in L^1(\R^N\setminus\overline\Omega)$ and~$h\in L^1(\partial\Omega)$ be nonnegative functions. Let~$u\in \mathcal{H}_{\alpha,\mu}(\Omega)$ be a weak solution of~\eqref{prob1}.

Then, $u$ is constant.
\end{lemma}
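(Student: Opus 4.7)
The plan is to exploit the weak formulation \eqref{weaksolution} with two carefully chosen test functions. First I would test with the constant $v\equiv 1$. This is an admissible test function because $\Omega$ is bounded (so $1\in L^2(\Omega)$), $g\in L^1(\R^N\setminus\Omega)$ and $h\in L^1(\partial\Omega)$ (so the weighted $L^2$-terms $\||g|^{1/2}\|_{L^2(\R^N\setminus\Omega)}^2=\|g\|_{L^1}$ and $\||h|^{1/2}\|_{L^2(\partial\Omega)}^2=\|h\|_{L^1}$ are finite), while $\nabla 1=0$ and the Gagliardo seminorm of a constant vanishes; hence $1\in\mathcal H_{\alpha,\mu}(\Omega)$. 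Since $\nabla v=0$ and $v(x)-v(y)=0$, the left-hand side of \eqref{weaksolution} is zero, giving
\[
\int_\Omega f\,dx+\int_{\R^N\setminus\Omega}g\,dx+\int_{\partial\Omega}h\,d\mathscr H^{N-1}_x=0.
\]
Because $f,g,h\ge 0$, each integral must individually vanish, so $f\equiv 0$ a.e. in $\Omega$, $g\equiv 0$ a.e. in $\R^N\setminus\Omega$ and $h\equiv 0$ on $\partial\Omega$.

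Next, I would test with $v=u\in \mathcal H_{\alpha,\mu}(\Omega)$. The right-hand side of \eqref{weaksolution} is now zero by the previous step, yielding
\[
\alpha\int_\Omega|\nabla u|^2\,dx+\int_{(0,1)}\frac{c_{N,s}}{2}\iint_{\Q}\frac{|u(x)-u(y)|^2}{|x-y|^{N+2s}}\,dx\,dy\,d\mu(s)=0.
\]
Both summands are nonnegative, so each must vanish. If $\alpha\ne 0$, then $\nabla u=0$ a.e.\ in $\Omega$, forcing $u$ to be constant in the (connected) domain $\Omega$. If $\mu\not\equiv 0$, then for $\mu$-a.e.\ $s$ (and in particular for some $s$) one has $u(x)=u(y)$ for a.e.\ $(x,y)\in\Q$; since $\Q\supset(\Omega\times\Omega)\cup(\Omega\times(\R^N\setminus\Omega))$, this forces $u$ to coincide a.e.\ with a single constant on all of $\R^N$. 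When $\mu\equiv 0$, the space $\mathcal H_{\alpha,\mu}(\Omega)=H^1(\Omega)$ only sees values inside $\Omega$ and the conclusion from $\nabla u=0$ suffices.

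The argument is essentially a Liouville-type observation for the Neumann problem: the ``compatibility'' obtained from testing against constants, combined with the sign hypothesis on $f,g,h$, kills the data, and then testing against $u$ itself kills the Dirichlet energy plus the total Gagliardo energy, so $u$ must be constant. The only point that requires care is verifying that $v\equiv 1$ belongs to $\mathcal H_{\alpha,\mu}(\Omega)$, which uses the integrability hypotheses on $g$ and $h$ together with the boundedness of $\Omega$; all other steps are algebraic manipulations on nonnegative quadratic forms. I do not foresee any serious obstacle here.
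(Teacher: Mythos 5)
Your proof is correct and follows exactly the same route as the paper: test \eqref{weaksolution} with $v\equiv 1$ to kill the data $f,g,h$ via the sign hypothesis, then test with $v=u$ to annihilate the energy and conclude constancy. The extra care you take in justifying that $1\in\mathcal H_{\alpha,\mu}(\Omega)$ and in distinguishing the cases $\alpha\neq 0$, $\mu\not\equiv 0$, $\mu\equiv 0$ is a welcome elaboration of what the paper leaves implicit.
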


\begin{proof}
The argument is inspired by that of Lemma~3.8 in~\cite{MR3651008}, but some care is needed to address the presence of the measure~$\mu$.
To this end, observing that the constant function $v\equiv 1$ belongs 
to $\mathcal{H}_{\alpha,\mu}(\Omega)$, we can use it as a test function in 
\eqref{weaksolution} to obtain that
\[
0=\int_\Omega f\,dx +\int_{\R^N\setminus\Omega} g\,dx
+\int_{\partial\Omega}h\,d\mathscr{H}_x^{N-1}.
\]
This implies that $f=0$ a.e. in $\Omega$, $g=0$ a.e. in 
$\R^N\setminus\Omega$ and $h=0$ a.e. in $\partial\Omega$.

Thus, taking $v:=u$ in~\eqref{weaksolution}, we get
\[
\alpha \int_\Omega |\nabla u|^2\, dx +\int_{(0, 1)} \frac{c_{N, s}}{2} \iint_{\Q} \frac{|u(x)-u(y)|^2}{|x-y|^{N+2s}} dx\,dy\, d\mu(s)=0,
\]
hence $u$ is constant in $\R^N$, as desired.
\end{proof}

Now we complete the proof of Theorem~\ref{ASYM}
(though inspired by Proposition~3.13 
in~\cite{MR3651008}, the complication arising from the measure does require here some bespoke modifications).

\begin{proof}[Proof of Theorem~\ref{ASYM}]
We observe that, since $\Omega$ is bounded, there exists~$R>0$
such that~$\Omega \subset B_R$.
Thus, if $y\in \Omega$ and $|x|>R$, we have
\[
|x|-R\leq |x-y|\leq |x|+R.
\]
Since $u$ is bounded, we set $\tilde{u}(x):=u(x)+c$ for some
$c>0$ such that $\tilde{u}\geq 0$. We notice that, for any
$x\in \R^N \setminus \overline{\Omega}$,
\[
\int_{(0,1)} \Ns \tilde{u}(x)\,d\mu(s)
=\int_{(0,1)} \Ns u(x)\,d\mu(s)=0.
\]
Thus, recalling the definition in~\eqref{defNsu}, for any 
$x\in \R^N\setminus \overline{\Omega}$ we can write 
\[
\tilde{u}(x)=\dfrac{\displaystyle\int_{(0,1)}c_{N,s}\int_\Omega \frac{\tilde{u}(y)}{|x-y|^{N+2s}}\,dy\,d\mu(s)}{\displaystyle\int_{(0,1)}c_{N,s}\int_\Omega \frac{1}{|x-y|^{N+2s}}\,dy\,d\mu(s)}.
\]
Therefore, if $|x|>R$, from the previous inequality we obtain that
\[
\dfrac{\displaystyle\int_{(0,1)}c_{N,s}\int_\Omega \frac{\tilde{u}(y)}{(|x|+R)^{N+2s}}\,dy\,d\mu(s)}{\displaystyle\int_{(0,1)}c_{N,s}\int_\Omega \frac{1}{(|x|-R)^{N+2s}}\,dy\,d\mu(s)}
\leq \tilde{u}(x) \leq
\dfrac{\displaystyle\int_{(0,1)}c_{N,s}\int_\Omega \frac{\tilde{u}(y)}{(|x|-R)^{N+2s}}\,dy\,d\mu(s)}{\displaystyle\int_{(0,1)}c_{N,s}\int_\Omega \frac{1}{(|x|+R)^{N+2s}}\,dy\,d\mu(s)},
\]
which becomes
\begin{equation}\label{troppipiani}
\dfrac{\displaystyle\int_{(0,1)} \frac{c_{N,s}}{(|x|+R)^{N+2s}}\,d\mu(s)}{\displaystyle\int_{(0,1)} \frac{c_{N,s}}{(|x|-R)^{N+2s}}\,d\mu(s)} \fint_{\Omega}\tilde{u}(y)\,dy
\leq \tilde{u}(x) \leq
\dfrac{\displaystyle\int_{(0,1)} \frac{c_{N,s}}{(|x|-R)^{N+2s}}\,d\mu(s)}{\displaystyle\int_{(0,1)} \frac{c_{N,s}}{(|x|+R)^{N+2s}}\,d\mu(s)} \fint_{\Omega}\tilde{u}(y)\,dy.
\end{equation}

Now, we claim that
\begin{equation}\label{limitepiani}
\lim_{|x|\to +\infty}\left(
\dfrac{\displaystyle\int_{(0,1)} \frac{c_{N,s}}{(|x|+R)^{N+2s}}\,d\mu(s)}{\displaystyle\int_{(0,1)} \frac{c_{N,s}}{(|x|-R)^{N+2s}}\,d\mu(s)} \right) =1.
\end{equation}
To check this, first we observe that
\begin{equation}\label{troppipiani2}
\dfrac{\displaystyle\int_{(0,1)} \frac{c_{N,s}}{(|x|+R)^{N+2s}}\,d\mu(s)}{\displaystyle\int_{(0,1)} \frac{c_{N,s}}{(|x|-R)^{N+2s}}\,d\mu(s)}
=1-\dfrac{\displaystyle\int_{(0,1)} c_{N,s}\left(\frac{1}{(|x|-R)^{N+2s}}-\frac{1}{(|x|+R)^{N+2s}}\right)\,d\mu(s)}{\displaystyle\int_{(0,1)} \frac{c_{N,s}}{(|x|-R)^{N+2s}}\,d\mu(s)},
\end{equation}
where we can write
\[
\begin{aligned}
\int_{(0,1)} c_{N,s}&\left(\frac{1}{(|x|-R)^{N+2s}}-\frac{1}{(|x|+R)^{N+2s}}\right)\,d\mu(s) \\
&=\int_{(0,1)}\frac{c_{N,s}}{(|x|-R)^{N+2s}}
\left[ 1- \left(1- \frac{2R}{|x|+R}\right)^{N+2s}\right]\,d\mu(s).
\end{aligned}
\]
Setting 
\[
t:=\frac{2R}{|x|+R}\leq 1, 
\]
we have
\[
1-(1-t)^{N+2s}= (N+2s)\int_0^t (1-\tau)^{N+2s-1}\,d\tau
\leq (N+2)\int_0^t \,d\tau= (N+2)t,
\]
and so
\[
\int_{(0,1)}\frac{c_{N,s}}{(|x|-R)^{N+2s}}
\left[ 1- \left(1- \frac{2R}{|x|+R}\right)^{N+2s}\right]\,d\mu(s)
\leq \frac{2R(N+2)}{|x|+R}
\int_{(0,1)}\frac{c_{N,s}}{(|x|-R)^{N+2s}}\,d\mu(s).
\]
Accordingly,
\[
\lim_{|x|\to +\infty}
\left(\dfrac{\displaystyle\int_{(0,1)} c_{N,s}\left(\frac{1}{(|x|-R)^{N+2s}}-\frac{1}{(|x|+R)^{N+2s}}\right)\,d\mu(s)}{\displaystyle\int_{(0,1)} \frac{c_{N,s}}{(|x|-R)^{N+2s}}\,d\mu(s)}\right)
\leq \lim_{|x|\to +\infty}\frac{2R(N+2)}{|x|+R}=0.
\]
{F}rom this and~\eqref{troppipiani2} we obtain~\eqref{limitepiani}.

Thus, from~\eqref{limitepiani}, passing to the limit in~\eqref{troppipiani}, we see that
\[
\lim_{|x|\to +\infty}\tilde{u}(x)
= \frac{1}{|\Omega|}\int_\Omega \tilde{u}(x)\,dx,
\]
which implies 
\[
\lim_{|x|\to +\infty}u(x)
= \frac{1}{|\Omega|}\int_\Omega u(x)\,dx,
\]
as desired.
\end{proof}

\section{Eigenvalues and eigenfunctions}\label{xPSJODLOJDFLFVOLNF023woer}

In this section we examine the spectral properties of the problem~\eqref{prob1}. To this aim,  for any measurable function $u:\R^N\to\R$, we set
\begin{equation}\label{HMUNORM}
\|u\|_{H_\mu(\Omega)}:= \|u\|_{L^2(\Omega)} + \int_{(0, 1)} c_{N, s}\iint_{\Omega\times\Omega} \frac{|u(x)-u(y)|^2}{|x-y|^{N+2s}} dx\,dy \,d\mu(s)
\end{equation}
and we define the space
\[
H_\mu (\Omega):=\left\{u\in L^2(\Omega) : \int_{(0, 1)} c_{N, s}\iint_{\Omega\times\Omega} \frac{|u(x)-u(y)|^2}{|x-y|^{N+2s}} dx\,dy \,d\mu(s) <+\infty \right\}.
\]
Arguing as in Proposition~\ref{Hmuhilbert}, it is easy to check that $H_\mu(\Omega)$ is a Hilbert space as well. Moreover, it is worth noting that in the proof of Proposition~\ref{propcompactembedding}, we have also proved that $H_\mu(\Omega)$ is continuously embedded in $H^{s_\sharp}(\Omega)$.

In our setting, the following version of the Poincaré inequality holds:
\begin{lemma}\label{POINCARE}
Let $s\in (0, 1)$ and let $\Omega\subset\R^N$ be any bounded Lipschitz domain. 

Then, for any function $u\in H_\mu(\Omega)$, we have
\[
\int_\Omega \left|u(x)- \fint_\Omega u (x) dx \right|^2 dx \le c(\Omega, \mu) \int_{(0, 1)} c_{N,  s}\iint_{\Omega\times\Omega} \frac{|u(x)-u(y)|^2}{|x-y|^{N+2s}} dx\,dy\, d\mu(s),
\]
for a suitable constant $c(\Omega, \mu)>0$.
\end{lemma}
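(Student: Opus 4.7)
\medskip

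The plan is to argue by contradiction using a compactness argument, since a direct pointwise bound comparing the $s$-seminorms at different values of $s$ goes in the wrong direction (cf.\ Lemma~\ref{lemmaGG2.1}) and cannot be closed. Suppose the claimed inequality fails; then there exists a sequence $u_n\in H_\mu(\Omega)$ for which the ratio of the left-hand side to the right-hand side diverges. Replacing $u_n$ by $u_n-\fint_\Omega u_n$ and normalizing, I can assume
\[
\fint_\Omega u_n\,dx=0,\qquad \|u_n\|_{L^2(\Omega)}=1,\qquad \int_{(0,1)} c_{N,s}\iint_{\Omega\times\Omega}\frac{|u_n(x)-u_n(y)|^2}{|x-y|^{N+2s}}\,dx\,dy\,d\mu(s)\xrightarrow[n\to\infty]{}0.
\]

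Next I would invoke the observation following the definition of $H_\mu(\Omega)$ that the proof of Proposition~\ref{propcompactembedding} also yields a continuous embedding $H_\mu(\Omega)\hookrightarrow H^{s_\sharp}(\Omega)$, where $s_\sharp$ is chosen as in~\eqref{ssharp}. Since the $H_\mu(\Omega)$-norm of $u_n$ is uniformly bounded (the $L^2$ part is $1$ and the seminorm part tends to $0$), the sequence is bounded in $H^{s_\sharp}(\Omega)$. By the compact embedding $H^{s_\sharp}(\Omega)\hookrightarrow L^2(\Omega)$ on the bounded Lipschitz domain $\Omega$ (see~\cite{MR2944369}), I extract a subsequence, still denoted $u_n$, converging strongly in $L^2(\Omega)$ and a.e.\ in $\Omega$ to some $v\in L^2(\Omega)$. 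Then $\|v\|_{L^2(\Omega)}=1$ and $\fint_\Omega v\,dx=0$.

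The final step is to show $v$ is constant, yielding the contradiction $v=0$. For this I apply Fatou's lemma on the product measure space $\Omega\times\Omega\times(0,1)$ equipped with $dx\,dy\,d\mu(s)$: since the integrand $c_{N,s}|u_n(x)-u_n(y)|^2/|x-y|^{N+2s}$ is nonnegative and converges pointwise a.e.\ (for $(x,y)$ thanks to a.e.\ convergence, and trivially in $s$), I obtain
\[
\int_{(0,1)} c_{N,s}\iint_{\Omega\times\Omega}\frac{|v(x)-v(y)|^2}{|x-y|^{N+2s}}\,dx\,dy\,d\mu(s)\le \liminf_{n\to\infty}\int_{(0,1)} c_{N,s}\iint_{\Omega\times\Omega}\frac{|u_n(x)-u_n(y)|^2}{|x-y|^{N+2s}}\,dx\,dy\,d\mu(s)=0.
\]
Since $\mu$ is nontrivial and the $s$-integrand is nonnegative, there exists $s\in(0,1)$ (in fact a set of positive $\mu$-measure) for which $\iint_{\Omega\times\Omega}|v(x)-v(y)|^2/|x-y|^{N+2s}\,dx\,dy=0$, which forces $v$ to be constant on $\Omega$. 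Combined with $\fint_\Omega v=0$ this gives $v\equiv 0$, contradicting $\|v\|_{L^2(\Omega)}=1$.

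The main obstacle is the first step, namely securing enough compactness to run the argument despite the absence of any lower control on $\mu$ near any particular value of $s$: the point is that the embedding $H_\mu(\Omega)\hookrightarrow H^{s_\sharp}(\Omega)$ (implicit in the proof of Proposition~\ref{propcompactembedding}) transfers the compactness available in a classical Sobolev setting to our measure-based framework without requiring $\mu$ to have an atom at $s_\sharp$. Once this is in hand, the use of Fatou in the product measure is routine, and the verification that the almost-everywhere seminorm vanishing implies that $v$ is constant uses only the nontriviality of $\mu$.
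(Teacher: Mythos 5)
Your proposal is correct and follows essentially the same route as the paper's proof: a contradiction argument with the normalization $\fint_\Omega u_n=0$, $\|u_n\|_{L^2(\Omega)}=1$ and vanishing superposed seminorm, followed by the continuous embedding $H_\mu(\Omega)\hookrightarrow H^{s_\sharp}(\Omega)$ from Proposition~\ref{propcompactembedding}, the compact embedding into $L^2(\Omega)$, and Fatou's lemma to conclude that the limit is a nonzero constant with zero average. There is nothing to add.
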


\begin{proof} The argument presented is a sharpening of
Lemma~3.10 in~\cite{MR3651008}.
Assume by contradiction that the desired inequality does not hold. Then, there exists a sequence $u_n\in H_\mu(\Omega)$ such that
\begin{equation}\label{av1}
\fint_\Omega u_n (x) \, dx =0, \quad \|u_n\|_{L^2(\Omega)} =1
\end{equation}
and
\begin{equation}\label{1n}
\int_{(0, 1)} c_{N, s} \iint_{\Omega\times\Omega} \frac{|u_n(x)-u_n(y)|^2}{|x-y|^{N+2s}} dx\,dy\, d\mu(s)<\frac1n.
\end{equation}
Hence, by~\eqref{HMUNORM},~\eqref{av1} and~\eqref{1n} we gather that $u_n$ is bounded in $H_\mu(\Omega)$. Besides, by Proposition~\ref{propcompactembedding}, we infer that the embeddings
\[
\mathcal H_\mu(\Omega) \subset H_\mu(\Omega) \subset H^{s_\sharp}(\Omega)
\]
are continuous. Moreover, by~\cite[Corollary 7.1]{MR2944369}, we have that~$H^{s_\sharp}(\Omega)$ is compactly embedded in $L^2(\Omega)$. Hence, up to subsequences, $u_n$ converges to some $u\in L^2(\Omega)$, namely
\[
u_n\to u \quad\mbox{ in } L^2(\Omega)\quad\mbox{ and }\quad u_n\to u \quad\mbox{ a.e. in } \Omega.
\]
Furthermore,  by~\eqref{av1},
\begin{equation}\label{av2}
\fint_\Omega u(x) \, dx =0, \quad\mbox{ and }\quad \| u\|_{L^2(\Omega)} =1.
\end{equation}
Thus, recalling that $u_n$ converges to $u$ a.e. in $\Omega$ as $n\to +\infty$, by~\eqref{1n} and Fatou's Lemma, we get 
\[
\int_{(0, 1)} c_{N, s} \iint_{\Omega\times\Omega} \frac{|u(x)-u(y)|^2}{|x-y|^{N+2s}} dx\,dy\, d\mu(s) \le\liminf_{n\to +\infty}\int_{(0, 1)} c_{N, s} \iint_{\Omega\times\Omega} \frac{|u_n(x)-u_n(y)|^2}{|x-y|^{N+2s}} dx\,dy\, d\mu(s) \le 0.
\]
Since $\mu$ is nontrivial, we infer by the previous identity that $u$ is constant in $\Omega$, in contradiction with~\eqref{av2}.
\end{proof}

We now provide the definition of eigenvalues and eigenfunctions of the operator~$L_{\alpha, \mu}$ in~\eqref{ELLE} with homogeneous $(\alpha,\mu)$-Neumann conditions in Definition~\ref{NDEFN}.

\begin{definition}
We say that $\lambda\in\R$ is an eigenvalue if the problem
\begin{equation}\label{problambda}
\begin{cases}
L_{\alpha, \mu}(u) = \lambda u \quad\mbox{ in } \Omega,\\
\mbox{with homogeneous }(\alpha,\mu)\mbox{-Neumann conditions}
\end{cases}
\end{equation}
admits a solution $u\in \mathcal H_{\alpha, \mu}(\Omega)$
which is not identically zero. 

If $\lambda$ is an eigenvalue, we call the solution $u$ an eigenfunction associated to the eigenvalue $\lambda$.
\end{definition}

With this setting, we can prove Theorem~\ref{ETGBAVA}
(in this regard, some care is needed to deal with the presence of the measure~$\mu$ and the possible coexistence with a local operator):

\begin{proof}[Proof of Theorem~\ref{ETGBAVA}]
We set
\[
L_0^2(\Omega):=\left\{f\in L^2(\Omega): \int_\Omega f(x) dx =0 \right\}.
\]
Moreover, let $T_0$ be the operator defined as
\begin{equation}\label{T0DEFN}
T_0: f\in L_0^2(\Omega)\longmapsto T_0 f = u\in\mathcal H_{\alpha, \mu}(\Omega),
\end{equation}
where $u$ denotes the unique weak solution to the problem~\eqref{prob1}, according to the Definition~\ref{weakdefn}. We point out that, by Theorem~\ref{uptoconstant} and since~$f\in L_0^2(\Omega)$, there exists a unique solution to problem~\eqref{prob1}, up to an additive constant. We can choose such a constant to be e.g. $\fint u(x) dx$. In this way,
\[
\int_\Omega \left(u(x)-\fint_\Omega u(x) dx\right) dx =0
\]
and then $u\in L_0^2(\Omega)$.

Also, we define the operator $T$ as the restriction
\begin{equation}\label{TRESTRICTION}
T:f\in L_0^2(\Omega)\longmapsto T_0 f \big|_\Omega \in L_0^2(\Omega).
\end{equation}
Let us show that $T$ is a compact and self-adjoint operator.

We start proving that $T$ is compact. To this,  we test~\eqref{weaksolution} with $v= u = T_0 f$, getting
\begin{equation}\label{case12}
\begin{split}
\alpha\int_\Omega |\nabla u|^2 dx +\int_{(0, 1)} \frac{c_{N, s}}{2} \iint_{\Q} \frac{|u(x)-u(y)|^2}{|x-y|^{N+2s}} dx\,dy\, d\mu(s)& = \int_\Omega f u \, dx\\
&\le \|f\|_{L^2(\Omega)}\|u\|_{L^2(\Omega)}.
\end{split}
\end{equation}
We notice that two cases can occur:
\begin{itemize}
\item if $\alpha\neq 0$,  by~\eqref{case12}, we infer that
\begin{equation}\label{ALPHANEQ0}
\begin{split}
\alpha\int_\Omega |\nabla u|^2 dx &\le \alpha\int_\Omega |\nabla u|^2 dx +\int_{(0, 1)} \frac{c_{N, s}}{2} \iint_{\Q} \frac{|u(x)-u(y)|^2}{|x-y|^{N+2s}} dx\,dy\, d\mu(s)\\
&\le \|f\|_{L^2(\Omega)}\|u\|_{L^2(\Omega)}.
\end{split}
\end{equation}
Also, the classical Poincaré inequality gives
\begin{equation}\label{POINCARECL}
\|u\|_{L^2(\Omega)}\le c_1 \left(\int_\Omega |\nabla u|^2 dx\right)^{1/2},
\end{equation}
for a suitable constant $c_1>0$ depending only on $\Omega$.  Thus, combining~\eqref{ALPHANEQ0} and~\eqref{POINCARECL}, we have
\begin{equation}\label{ALPHA1}
\alpha \left(\int_\Omega |\nabla u|^2 dx\right)^{1/2}\le c_2 \|f\|_{L^2(\Omega)},
\end{equation}
for a suitable $c_2>0$;
\item if $\alpha = 0$, by~\eqref{case12} it follows that
\begin{equation}\label{ALPHA=0}
\begin{split}
\int_{(0, 1)} \frac{c_{N, s}}{2} \iint_{\Omega\times\Omega} \frac{|u(x)-u(y)|^2}{|x-y|^{N+2s}} dx\,dy\, d\mu(s)&\le \int_{(0, 1)} \frac{c_{N, s}}{2} \iint_{\Q} \frac{|u(x)-u(y)|^2}{|x-y|^{N+2s}} dx\,dy\, d\mu(s)\\
& \le \|f\|_{L^2(\Omega)}\|u\|_{L^2(\Omega)}.
\end{split}
\end{equation}
Now, recalling that $\fint u (x) dx =0$ since $u\in L_0^2(\Omega)$,  by Lemma~\ref{POINCARE} we get
\[
\|u\|_{L^2(\Omega)} \le \left(c(\Omega, \mu)\int_{(0, 1)} \frac{c_{N, s}}{2} \iint_{\Omega\times\Omega} \frac{|u(x)-u(y)|^2}{|x-y|^{N+2s}} dx\,dy\, d\mu(s)\right)^{1/2}.
\]
Thus, ~\eqref{ALPHA=0} becomes
\begin{equation}\label{ALPHA2}
\left(\int_{(0, 1)} \frac{c_{N, s}}{2} \iint_{\Omega\times\Omega} \frac{|u(x)-u(y)|^2}{|x-y|^{N+2s}} dx\,dy\, d\mu(s)\right)^{1/2} \le c_3 \|f\|_{L^2(\Omega)},
\end{equation}
with $c_3>0$ depending only on $\Omega$ and $\mu$.
\end{itemize}
Now, let $f_n$ be any bounded sequence in $L_0^2(\Omega)$ and let $u_n:= T f_n$. On the one hand, if $\alpha\neq 0$, by~\eqref{ALPHA1} we obtain that $u_n$ is bounded in $H^1(\Omega)$, which is compactly embedded in $L^2(\Omega)$.  Hence, $u_n$ converges strongly to some $u\in L^2(\Omega)$, up to subsequences. 

On the other hand, if $\alpha = 0$, by~\eqref{ALPHA2} we infer that $u_n$ is bounded in~$H_\mu(\Omega)$. By~\cite[Corollary 7.2]{MR2944369}, the space~$H_\mu(\Omega)$ is compactly embedded in $L^2(\Omega)$ and again $u_n$ converges strongly, up to subsequences, to some $u\in L^2(\Omega)$. Consequently, we have that $T$ is a compact operator.

Let us show that $T$ is self-adjoint. To this aim, we take $f_1, f_2\in C_0^\infty(\Omega)$ satisfying
\[
\fint_\Omega f_1(x) dx = \fint_\Omega f_2(x) dx =0.
\]
Hence, by~\eqref{T0DEFN} and~\eqref{weaksolution} (recall that $g\equiv h\equiv 0$), we have
\begin{equation}\label{F1}
\begin{split}
\alpha\int_\Omega \nabla (T_0 f_1)(x)\cdot \nabla v(x) \,dx + &\int_{(0, 1)} \frac{c_{N, s}}{2} \iint_{\Q} \frac{(T_0 f_1(x)-T_0 f_1(y))(v(x) -v(y))}{|x-y|^{N+2s}} \,dx\,dy\, d\mu(s)\\
& = \int_\Omega f_1 v \,dx \qquad\mbox{ for any } v\in \mathcal H_{\alpha, \mu}(\Omega)
\end{split}
\end{equation}
and 
\begin{equation}\label{F2}
\begin{split}
\alpha\int_\Omega \nabla (T_0 f_2)(x)\cdot \nabla w(x)\, dx + &\int_{(0, 1)} \frac{c_{N, s}}{2} \iint_{\Q} \frac{(T_0 f_2(x)-T_0 f_2(y))(w(x) -w(y))}{|x-y|^{N+2s}}\, dx\,dy\, d\mu(s)\\
& = \int_\Omega f_2 w\, dx \qquad\mbox{ for any } w\in \mathcal H_{\alpha, \mu}(\Omega).
\end{split}
\end{equation}
Now, recalling the definition~\eqref{TRESTRICTION} and testing~\eqref{F1} and~\eqref{F2} with~$v:= T_0 f_2\in \mathcal H_{\alpha, \mu}(\Omega)$ and~$w:= T_0 f_1\in \mathcal H_{\alpha, \mu}(\Omega)$ respectively, we obtain that
\begin{equation}\label{F12}
\int_\Omega f_1(x) \, T f_2(x) dx = \int_\Omega f_2(x) \, T f_1(x) dx \quad\mbox{ for any } f_1, f_2\in C_0^\infty(\Omega).
\end{equation}
We employ a density argument to prove that the identity~\eqref{F12} holds for any $f_1, f_2\in L_0^2(\Omega)$. For this, we notice that if $f_1, f_2\in L_0^2(\Omega)$, one can find two sequences $f_{1, n}, f_{2, n}\in C_0^\infty(\Omega)$ such that $f_{1, n}\to f_1$ in $L^2(\Omega)$ and $f_{2, n}\to f_2$ in $L^2(\Omega)$, as $n\to +\infty$. 

Thus, by~\eqref{F12} we deduce that
\begin{equation}\label{F12K}
\int_\Omega f_{1, n} (x) \, T f_{2, n}(x) dx = \int_\Omega f_{2, n}(x) \, T f_{1, n}(x) dx.
\end{equation}
Moreover, recalling that~\eqref{ALPHA1} and~\eqref{ALPHA2} hold with $u=T_0 f$ and $f\in L^2(\Omega)$,  either by~\eqref{TRESTRICTION}, \eqref{POINCARECL}, \eqref{ALPHA1} if~$\alpha\neq 0$ or by Lemma~\ref{POINCARE} and~\eqref{ALPHA2} if~$\alpha =0$, we have
\[
\|T f_{1, n}\|_{L^2(\Omega)}\le c_4 \|f_{1, n}\|_{L^2(\Omega)} \quad\mbox{ and }\quad \|T f_{2, n}\|_{L^2(\Omega)}\le c_5 \|f_{2, n}\|_{L^2(\Omega)},
\]
for some constants $c_4, c_5>0$.

Thus, $T f_{1, n}\to T f_1$ in $L^2(\Omega)$ and $T f_{2, n}\to T f_2$ in $L^2(\Omega)$ as $n\to +\infty$ and by~\eqref{F12K} we have
\[
\int_\Omega f_1 (x) \, T f_2(x) dx = \int_\Omega f_2(x) \, T f_1(x) dx,
\]
which means that $T$ is a self-adjoint operator in $L^2(\Omega)$.

Then, by the spectral theorem, there exists a sequence of eigenvalues~$\{\mu_i\}_{i\ge 2}$ of~$T$ with corresponding eigenfunctions~$\{e_i\}_{i\ge 2}$ which are a complete orthogonal system of~$L_0^2(\Omega)$.

We claim that
\begin{equation}\label{MUNEQ0}
\mu_i \neq 0 \quad\mbox{ for any } i\in\{2,\dots, n\}.
\end{equation}
If not, there exists $i\in\{2, \dots, n\}$ such that $\mu_i=0$. Hence,
\begin{equation}\label{0MUI}
0=\mu_i e_i = T e_i = T_0 e_i \quad\text{ in } \Omega
\end{equation}
and also 
\[
\int_{(0,1)} \Ns e_i(x)\,d\mu(s)=0 
\]
in $\R^n\setminus\overline{\Omega}$, by construction. 

This, along with~\eqref{0MUI}, imply that
\[
T_0 e_i(x) =\frac{\displaystyle \int_{(0, 1)}c_{N,s}\int_\Omega \frac{T_0 e_i(y)}{|x-y|^{n+2s}} dy \, d\mu(s)}{\displaystyle \int_{(0, 1)}c_{N,s}\int_\Omega \frac{1}{|x-y|^{n+2s}} dy\, d\mu(s)} =0 \quad\text{ in } \R^n\setminus\overline{\Omega}.
\]
Hence, by the previous identity and~\eqref{0MUI} we infer that
\[
0= -\alpha \Delta (T_0 e_i) + \int_{(0, 1)} (-\Delta)^s (T_0 e_i)\, d\mu(s) =e_i \quad\mbox{ in } \Omega
\]
in the weak sense, which gives $e_i\equiv 0$ in $\Omega$. This contradicts the fact that $e_i$ is an egenfunction and then~\eqref{MUNEQ0} holds.

Now, in light~\eqref{MUNEQ0}, we set
\[
\lambda_i:= \mu_i^{-1} \quad\text{ for any } i\in\{1,\dots, n\}.
\]
Moreover, we also define $u_i:= T_0 e_i$ and we claim that $u_2, u_3, \dots$ is the desired systems of eigenfunctions with corresponding eigenvalues $\lambda_2, \lambda_3,\dots$.

Indeed, by definition
\[
u_i = T_0 e_i = T e_i = \mu_i e_i\quad\mbox{ in } \Omega,
\]
which means that the desired properties of orthogonality and completeness properties of $u_2, u_3, \dots$ simply follows by those of $e_2, e_3, \dots$ and the previous identity provides that
\[
-\alpha \Delta (u_i) + \int_{(0, 1)} (-\Delta)^s (u_i)\, d\mu(s) =-\alpha \Delta (T_0 e_i)+ \int_{(0, 1)} (-\Delta)^s (T_0 e_i)\, d\mu(s) =e_i =\lambda_i u_i \quad\mbox{ in } \Omega,
\]
which means that also the spectral property holds. Let us prove that
\begin{equation}\label{lambda>0}
\lambda_i >0 \quad\mbox{ for any } i\ge 2.
\end{equation}
To this end, we notice that the corresponding eigenfunctions $u_1, u_2, \dots$ are solutions of  the problem
\[
\begin{cases}
L_{\alpha,\mu}(u_i)=\lambda_i u_i  \quad \mbox{ in } \Omega
\\
\mbox{with homogeneous } (\alpha,\mu)\mbox{-Neumann conditions},
\end{cases}
\]
Then, taking $u_i$ as a test function in the weak formulation of this problem, we find
\begin{equation*}
\alpha \int_\Omega |\nabla u(x)|^2dx +\int_{(0, 1)} \frac{c_{N, s}}{2} \iint_{\Q} \frac{|u(x)-u(y)|^2}{|x-y|^{N+2s}} dx\,dy\, d\mu(s)= \lambda_i \int_\Omega u_i(x)^2 dx,
\end{equation*}
which entails that $\lambda_i\ge 0$.  

Let us prove that~$\lambda_i>0$. To check this, suppose by contradiction that~$\lambda_i=0$. Then, Lemma~\ref{lemmasoluzionecostante} provides that~$u_i$ is constant. As we also know that that~$u_i\in L^2_0(\Omega)$, it follows that~$u_i\equiv 0$, against the fact that~$u_i$ is an eigenvalue.  This proves~\eqref{lambda>0}.

{F}rom~\eqref{lambda>0}, up to reordering them, we can suppose that $0<\lambda_2\le\lambda_3\le\dots$ and we
notice that $\lambda_1=0$ is an eigenvalue, with eigenfunction $u_1= 1$,  again by Lemma~\ref{lemmasoluzionecostante}.
Therefore, we have a sequence of eigenvalues $0 =\lambda_1 <\lambda_2\le\lambda_3\le\dots$ and its corresponding eigenfunctions are a complete orthogonal system in~$L^2(\Omega)$ (see the proof of~\cite[Theorem 3.11 ]{MR3651008}).
This concludes the proof.
\end{proof}

\section{The heat equation}\label{KDMCLDFMVV2}

In this section we study the heat equation with homogeneous
Neumann conditions, namely
\begin{equation}\label{problemaequazionecalore}
\begin{cases}
\partial_t u(x,t)+L_{\alpha,\mu}(u(x,t))=0  & \mbox{in }
\Omega\times (0,+\infty),
\\
u(x,0)= u_0(x)  & \mbox{in } \Omega,
\\
\mbox{with homogeneous }(\alpha,\mu)\mbox{-Neumann conditions}
& \mbox{in } (0,+\infty).
\end{cases}
\end{equation} 
Here, we deal with classical solutions, 
that is $u(\cdot,t)\in L^\infty(\R^N)\cap C(\overline{\Omega})\cap C^2(\Omega)$ for any~$t\in [0,+\infty)$ and~$u(x,\cdot)\in C^1((0,+\infty))\cap C([0,+\infty))$,
with the homogeneous $(\alpha,\mu)$-Neumann conditions satisfied 
pointwise in~$\R^N\setminus \Omega$ and~$\partial \Omega$ 
respectively.

The next two results give that classical solutions of problem
\eqref{problemaequazionecalore} preserve their mass and have 
energy that decreases in time. 

We point out that the assumptions required throughout this section are significantly more general than the ones required in \cite[Propositions 4.1, 4.2 and 4.3]{MR3651008}.

\begin{proposition}\label{proposizionemassacostante}
Let $u(x,t)$ be a classical solution of
\eqref{problemaequazionecalore} satisfying
\begin{equation}\label{ipotesimassa1}
\partial_t u \in L^\infty_{loc}\left((0,+\infty),L^1(\Omega)\right)
\end{equation}
and
\begin{equation}\label{ipotesimassa2}
\alpha\Delta u(\cdot,t)\in L^1(\Omega)\quad \mbox{and}\quad(-\Delta)^s u(\cdot,t)\in L^1\left(\Omega\times (0,1)\right)\quad\mbox{ for any } t\in (0,+\infty).
\end{equation}

Assume that the following statements hold:
\begin{itemize}
\item if $\alpha\neq 0$, then
\begin{equation}\label{ipotesimassa3}
u(\cdot,t)\in C^1(\overline{\Omega})\quad\mbox{ for any } t\in (0,+\infty);
\end{equation}
\item if $\mu \not \equiv 0$, then, for any $t\in (0,+\infty)$, the function
\begin{equation}\label{ipotesimassa4}
\Omega\times (\R^N \setminus \Omega)\times (0,1)\ni (x,y,s)\mapsto
\frac{u(x,t)-u(y,t)}{|x-y|^{N+2s}} \mbox{ belongs to }
L^1\left(\Omega\times (\R^N \setminus \Omega)\times (0,1)\right).
\end{equation}
\end{itemize}

Then, 
\[
\int_\Omega u(x,t)\,dx =\int_\Omega u_0(x)\,dx\quad\mbox{ for any } t\in (0,+\infty),
\]
namely the total mass is conserved.
\end{proposition}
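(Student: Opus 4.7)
The plan is to show that the mass $M(t):=\int_\Omega u(x,t)\,dx$ is constant in $t\in(0,+\infty)$ by differentiating under the integral sign and checking that the contributions from the local and nonlocal parts of $L_{\alpha,\mu}$ both vanish, thanks to the homogeneous $(\alpha,\mu)$-Neumann conditions.

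First I would observe that hypothesis~\eqref{ipotesimassa1} permits the exchange of the time derivative with the spatial integral on any compact subinterval of $(0,+\infty)$, so that
\[
\frac{d}{dt}\int_\Omega u(x,t)\,dx=\int_\Omega \partial_t u(x,t)\,dx.
\]
Using the equation $\partial_t u=-L_{\alpha,\mu}(u)=\alpha\Delta u-\int_{(0,1)}(-\Delta)^s u\,d\mu(s)$, which is integrable over $\Omega$ for every fixed $t$ by~\eqref{ipotesimassa2}, I would split the derivative into the classical and nonlocal contributions and treat each one separately.

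For the classical part, assuming $\alpha\neq 0$, condition~\eqref{ipotesimassa3} ensures that $\nabla u(\cdot,t)$ has a continuous trace on $\partial\Omega$, so the standard divergence theorem gives
\[
\alpha\int_\Omega \Delta u(x,t)\,dx=\alpha\int_{\partial\Omega}\partial_\nu u(x,t)\,d\mathscr{H}_x^{N-1}=0,
\]
the last equality following from the homogeneous Neumann condition $\partial_\nu u=0$ on $\partial\Omega$. For the nonlocal part, assuming $\mu\not\equiv 0$, conditions~\eqref{ipotesimassa2} and~\eqref{ipotesimassa4} are precisely the hypotheses~\eqref{ipotesidivergenza1} and~\eqref{ipotesidivergenza2} of Lemma~\ref{lemmathdivergenza}, applied to $u(\cdot,t)$ for each fixed $t$. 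Hence
\[
\int_{(0,1)}\int_\Omega (-\Delta)^s u(x,t)\,dx\,d\mu(s)=\int_{(0,1)}\int_{\R^N\setminus\Omega}\Ns u(x,t)\,dx\,d\mu(s)=0,
\]
where the last equality uses the homogeneous Neumann condition $\int_{(0,1)}\Ns u(x,t)\,d\mu(s)=0$ in $\R^N\setminus\overline\Omega$ (together with the fact that $|\partial\Omega|=0$).

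Combining these two identities gives $M'(t)=0$ for all $t\in(0,+\infty)$, and since $u(x,\cdot)\in C([0,+\infty))$ we can conclude by continuity that $M(t)=M(0)=\int_\Omega u_0(x)\,dx$. The only delicate step is verifying that the hypotheses of the fractional divergence theorem~\eqref{tesidivergenza} are met pointwise in $t$, but this is exactly built into assumptions~\eqref{ipotesimassa2} and~\eqref{ipotesimassa4}; the cases $\alpha=0$ or $\mu\equiv 0$ are handled by simply omitting the corresponding contribution, as the relevant hypothesis is only required when the corresponding part of the operator is present.
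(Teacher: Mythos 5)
Your proposal is correct and follows essentially the same route as the paper: differentiation under the integral sign via~\eqref{ipotesimassa1}, splitting into the local part (classical divergence theorem plus~\eqref{ipotesimassa3} and $\partial_\nu u=0$) and the nonlocal part (Lemma~\ref{lemmathdivergenza} under~\eqref{ipotesimassa2} and~\eqref{ipotesimassa4}, plus the homogeneous nonlocal Neumann condition). Your closing remark on recovering $M(t)=M(0)$ from the continuity of $u(x,\cdot)$ at $t=0$ is a slightly more careful treatment of a step the paper leaves implicit.
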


\begin{proof} The argument presented here is a careful refinement of Proposition~4.1 in~\cite{MR3651008}.
In light of~\eqref{ipotesimassa1} and~\eqref{ipotesimassa2},
from the dominated convergence theorem we have
\[
\frac{d}{dt}\int_\Omega u(x,t)\,dx
=\int_\Omega \partial_t u(x,t)\,dx
=\alpha \int_\Omega \Delta u(x,t)\,dx
-\int_{(0,1)}\int_\Omega(-\Delta)^s u(x,t)\,dx\,d\mu(s).
\]
Moreover, by~\eqref{ipotesimassa3} and~\eqref{ipotesimassa4}, and using the homogeneous Neumann conditions in Definition~\ref{NDEFN},
we have that divergence theorem and Lemma~\ref{lemmathdivergenza} apply and
\[
\frac{d}{dt}\int_\Omega u(x,t)\,dx
=\alpha\int_{\partial \Omega} \partial_\nu u(x)\,d\mathscr{H}_x^{N-1}
+\int_{(0,1)}\int_{\R^N\setminus \Omega} \Ns u(x,t)\,dx\,d\mu(s)
=0.
\]
This implies that the quantity $\int_\Omega u(x,t)\,dx$ does 
not depend on $t$, which concludes the proof.
\end{proof}

\begin{proposition}\label{DECENSD}
Let $u(x,t)$ be a classical solution of
\eqref{problemaequazionecalore} such that 
\begin{equation}\label{ipotesienergia1}
\alpha \nabla u(\cdot,t)\nabla \partial_t u(\cdot,t)
\in L^1(\Omega)
\end{equation}and suppose that
the function
\begin{equation}\label{ipotesienergia2}\begin{split}&
\Q\times (0,1)\ni (x,y,s) \mapsto c_{N,s}
\frac{(u(x,t)-u(y,t))(\partial_t u(x,t)-\partial_t u(y,t))}{|x-y|^{N+2s}} \\&\mbox{belongs to } L^1(\Q\times (0,1)),\end{split}
\end{equation}
and
\begin{equation}\label{ipotesienergia3}
\alpha \partial_t u(\cdot,t)\Delta u(\cdot,t)\in L^1(\Omega),\quad\partial_t u(\cdot,t)(-\Delta)^s u(\cdot,t)\in L^1(\Omega\times (0,1)),
\end{equation}
for any $t\in (0,+\infty)$.

In addition, assume that:
\begin{itemize}
\item if $\alpha\neq 0$, then the assumption in~\eqref{ipotesimassa3} is satisfied,
\item if $\mu \not \equiv 0$, then, for any $t\in (0,+\infty)$, the function
\begin{equation}\label{ipotesienergia4}
\begin{split}&
(\R^N \setminus \Omega)\times \Omega \times (0,1)\ni (x,y,s)\mapsto
c_{N,s}\frac{(u(x,t)-u(y,t))\partial_t u(x,t)}{|x-y|^{N+2s}} \\
&\mbox{ belongs to }
L^1\left((\R^N \setminus \Omega)\times \Omega \times (0,1)\right).
\end{split}
\end{equation}
\end{itemize}

Then, the energy
\[
E(t):=\frac{\alpha}{2}\int_\Omega |\nabla u(x,t)|^2\,dx
+\int_{(0,1)}\frac{c_{N,s}}{4}
\iint_\Q \frac{|u(x,t)-u(y,t)|^2}{|x-y|^{N+2s}}\,dx\,dy\,d\mu(s)
\]
is decreasing in time $t>0$ (and strictly decreasing, unless~$u$ is independent of time).
\end{proposition}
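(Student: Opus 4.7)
The plan is to differentiate $E(t)$ in time, transfer derivatives to $u$ via integration by parts in both the local and nonlocal components, exploit the homogeneous $(\alpha,\mu)$-Neumann conditions to eliminate boundary and exterior contributions, and finally invoke the equation $\partial_t u = -L_{\alpha,\mu}(u)$ in $\Omega$ to obtain a manifestly nonpositive expression.

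More concretely, first I would use the integrability assumptions~\eqref{ipotesienergia1} and~\eqref{ipotesienergia2} together with the dominated convergence theorem to justify differentiation under the integral sign, obtaining
\[
E'(t) = \alpha\int_\Omega \nabla u(x,t)\cdot \nabla \partial_t u(x,t)\,dx + \int_{(0,1)}\frac{c_{N,s}}{2}\iint_\Q \frac{(u(x,t)-u(y,t))(\partial_t u(x,t)-\partial_t u(y,t))}{|x-y|^{N+2s}}\,dx\,dy\,d\mu(s).
\]
Next, in the local term, the regularity in~\eqref{ipotesimassa3} (assumed in the case $\alpha\neq 0$) together with the homogeneous Neumann condition $\partial_\nu u=0$ on $\partial\Omega$ allows me to apply the classical divergence theorem:
\[
\alpha\int_\Omega \nabla u\cdot \nabla \partial_t u\,dx = -\alpha\int_\Omega \Delta u\, \partial_t u\,dx.
\]
For the nonlocal term, conditions~\eqref{ipotesienergia2}, \eqref{ipotesienergia3} and~\eqref{ipotesienergia4} are precisely those required for Lemma~\ref{lemmaperparti} to apply (with $v=\partial_t u$), giving
\[
\int_{(0,1)}\frac{c_{N,s}}{2}\iint_\Q \frac{(u(x,t)-u(y,t))(\partial_t u(x,t)-\partial_t u(y,t))}{|x-y|^{N+2s}}\,dx\,dy\,d\mu(s) = \int_{(0,1)}\!\int_\Omega \partial_t u\,(-\Delta)^s u\,dx\,d\mu(s) + \int_{(0,1)}\!\int_{\R^N\setminus\Omega} \partial_t u\,\Ns u\,dx\,d\mu(s).
\]

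The key step is then to dispose of the exterior integral. By Fubini (applicable thanks to~\eqref{ipotesienergia4}), I may swap the order of integration and pull $\partial_t u(x,t)$ out of the $\mu$-integral, so that the homogeneous Neumann condition $\int_{(0,1)}\Ns u(x,t)\,d\mu(s)=0$ for $x\in\R^N\setminus\overline\Omega$ kills the exterior term outright. Combining the surviving terms yields
\[
E'(t) = \int_\Omega \partial_t u(x,t)\left(-\alpha\Delta u(x,t) + \int_{(0,1)}(-\Delta)^s u(x,t)\,d\mu(s)\right)dx = \int_\Omega \partial_t u(x,t)\, L_{\alpha,\mu}(u(x,t))\,dx.
\]
Using the equation $\partial_t u = -L_{\alpha,\mu}(u)$ in $\Omega$, this becomes
\[
E'(t) = -\int_\Omega |\partial_t u(x,t)|^2\,dx \le 0,
\]
with strict inequality unless $\partial_t u\equiv 0$ in $\Omega$, in which case $u$ is independent of time.

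The main technical obstacle is not conceptual but bookkeeping: one must carefully verify that each application of Fubini, of the dominated convergence theorem, and of the classical/nonlocal integration by parts formulas is legitimate under the stated hypotheses, with particular attention to the cases $\alpha=0$ and $\mu\equiv 0$, where the corresponding terms and regularity assumptions vacuously hold. The delicate point is the exchange of $\partial_t u(x,t)$ with the $\mu$-integral defining the Neumann trace in the exterior term, which is precisely what assumption~\eqref{ipotesienergia4} is designed to support.
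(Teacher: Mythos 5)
Your proof is correct and follows essentially the same route as the paper: differentiate $E$ under the integral sign using~\eqref{ipotesienergia1}--\eqref{ipotesienergia2}, apply the classical divergence theorem and Lemma~\ref{lemmaperparti} to rewrite $E'(t)$ in terms of $-\alpha\Delta u$ and $\int_{(0,1)}(-\Delta)^s u\,d\mu(s)$ tested against $\partial_t u$, kill the boundary and exterior contributions via the homogeneous $(\alpha,\mu)$-Neumann conditions, and conclude $E'(t)=-\int_\Omega|\partial_t u|^2\,dx\le 0$ from the equation. Your explicit remark that~\eqref{ipotesienergia4} is what licenses the Fubini exchange needed to pull $\partial_t u$ out of the $\mu$-integral in the exterior term is exactly the point the paper handles implicitly.
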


\begin{proof} This proof is inspired to Proposition~4.2 in~\cite{MR3651008}, but here some additional care is required to distinguish the cases~$\alpha\ne0$ and~$\mu\not\equiv0$.

We compute $E'(t)$ and show that it is negative.
Indeed, recalling~\eqref{ipotesienergia1} and
\eqref{ipotesienergia2}, we have
\begin{equation}\label{equazioneE'}
\begin{aligned}
E'(t)&=\alpha\int_\Omega \nabla u(x,t)\nabla \partial_t u(x,t)\,dx 
\\
&\qquad+\int_{(0,1)}\frac{c_{N,s}}{2}
\iint_\Q \frac{(u(x,t)-u(y,t))(\partial_t u(x,t)-\partial_t u(y,t))}{|x-y|^{N+2s}}\,dx\,dy\,d\mu(s).
\end{aligned}
\end{equation}
In light of~\eqref{ipotesimassa3}, \eqref{ipotesienergia1},
\eqref{ipotesienergia2}, \eqref{ipotesienergia3} and~\eqref{ipotesienergia4}, from Lemma~\ref{lemmaperparti}, the 
classical integration by parts formula and the homogeneous Neumann
conditions, we gather that
\[
\begin{aligned}
\alpha\int_\Omega &\nabla u(x,t)\nabla \partial_t u(x,t)\,dx 
+\int_{(0,1)}\frac{c_{N,s}}{2}
\iint_\Q \frac{(u(x,t)-u(y,t))(\partial_t u(x,t)-\partial_t u(y,t))}{|x-y|^{N+2s}}\,dx\,dy\,d\mu(s) \\
&=-\alpha \int_\Omega \partial_t u(x,t) \Delta u(x,t)\,dx
+\int_{(0,1)} \int_\Omega \partial_t u(x,t) (-\Delta)^s u(x,t)\,dx\,d\mu(s) \\
&=- \int_\Omega \partial_t u(x,t)
\left(\alpha\Delta u(x,t)\,dx -\int_{(0,1)}(-\Delta)^s u(x,t)\,d\mu(s)\right) \,dx.
\end{aligned}
\]
Thus,~\eqref{equazioneE'} becomes
\[
E'(t)=- \int_\Omega \partial_t u(x,t)
\left(-L_{\alpha,\mu}u(x,t) \right)\,dx
=- \int_\Omega |\partial_t u(x,t)|^2\,dx\leq 0,
\]
with the equality holding if and only if $u$ is constant in time.  This concludes the proof.
\end{proof}

The next result generalizes \cite[Proposition 4.3]{MR3651008} to our setting, with the proof postponed to Appendix \ref{appendixASUYY}.

\begin{proposition}\label{ASUYY}
Let $u(x,t)$ be a classical solution of
\eqref{problemaequazionecalore} satisfying~\eqref{ipotesimassa1}
and~\eqref{ipotesimassa2}.

In addition, assume that:
\begin{itemize}
\item if $\alpha\neq 0$, then the assumption in~\eqref{ipotesimassa3} is satisfied,
\item if $\mu \not \equiv 0$, then, for any $t\in (0,+\infty)$, the assumption in~\eqref{ipotesimassa4} is satisfied.
\end{itemize}

Then,
\[
u(x,t) \to \frac{1}{|\Omega|}
\int_\Omega u_0(x)\,dx \mbox{ in } L^2(\Omega) \quad\mbox{ as } t\to +\infty.
\]
\end{proposition}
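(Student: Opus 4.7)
The plan is to show exponential $L^2(\Omega)$-decay of $v(x,t) := u(x,t) - M$, where $M := \frac{1}{|\Omega|}\int_\Omega u_0(x)\,dx$, by coupling energy dissipation with a Poincar\'e--Wirtinger-type estimate. First, I apply Proposition~\ref{proposizionemassacostante} (whose hypotheses coincide with ours) to conclude $\int_\Omega u(\cdot,t)\,dx = \int_\Omega u_0\,dx$, so $v(\cdot,t)$ has zero mean on $\Omega$ for every $t\ge 0$. Since $L_{\alpha,\mu}$, $\partial_\nu$ and $\int_{(0,1)} \Ns(\cdot)\,d\mu(s)$ all annihilate constants, $v$ is itself a classical solution of~\eqref{problemaequazionecalore} with homogeneous $(\alpha,\mu)$-Neumann conditions, and all integrability hypotheses are preserved by subtraction of a constant.

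Next, I differentiate $\phi(t) := \|v(\cdot,t)\|_{L^2(\Omega)}^2$ in time. Bringing $\partial_t$ inside the integral is justified by~\eqref{ipotesimassa1} together with the $L^\infty$-bound on $v$, giving $\phi'(t) = -2\int_\Omega v\,L_{\alpha,\mu}v\,dx$. Applying the classical divergence theorem to the local term and Lemma~\ref{lemmaperparti} with both arguments equal to $v$ (its hypotheses being ensured by~\eqref{ipotesimassa2}, \eqref{ipotesimassa4} and the boundedness of $v$), and observing that the boundary contributions vanish because $\partial_\nu v = 0$ on $\partial\Omega$ while $\int_{\R^N\setminus\Omega} v(x)\int_{(0,1)}\Ns v(x)\,d\mu(s)\,dx = 0$ by Fubini and the homogeneous nonlocal Neumann condition, I arrive at
\[
\phi'(t) = -2\alpha\|\nabla v(\cdot,t)\|_{L^2(\Omega)}^2 - \int_{(0,1)} c_{N,s}\iint_\Q \frac{|v(x,t)-v(y,t)|^2}{|x-y|^{N+2s}}\,dx\,dy\,d\mu(s) = -4\,E_v(t),
\]
where $E_v$ denotes the energy from Proposition~\ref{DECENSD} evaluated at $v$.

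The remaining ingredient is a Poincar\'e--Wirtinger inequality, available because $v(\cdot,t)$ has zero mean on $\Omega$: if $\alpha \neq 0$, the classical Poincar\'e--Wirtinger estimate yields $\|v\|_{L^2(\Omega)}^2 \le (2C_\Omega/\alpha)\,E_v(t)$; if $\alpha = 0$ (whence $\mu \not\equiv 0$), Lemma~\ref{POINCARE} gives $\|v\|_{L^2(\Omega)}^2 \le 4\,c(\Omega,\mu)\,E_v(t)$, using that the integrand over $\Omega\times\Omega$ is pointwise dominated by the one over $\Q$. In either case, $\phi(t) \le K\,E_v(t)$ for some $K>0$ independent of $t$, so the differential inequality $\phi'(t) \le -(4/K)\phi(t)$ holds, and Gronwall's lemma yields $\phi(t) \le \phi(0)e^{-4t/K} \to 0$ as $t\to +\infty$, which is the desired $L^2$-convergence.

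The main technical obstacle is checking that the standing regularity and integrability assumptions genuinely permit the differentiation--integration exchange and the application of Lemma~\ref{lemmaperparti} at each fixed $t>0$ with $u=v$; the $L^\infty$-bound on classical solutions together with \eqref{ipotesimassa2}--\eqref{ipotesimassa4} should make this routine but will require careful bookkeeping case by case ($\alpha\neq 0$ vs.\ $\mu\not\equiv 0$). A minor subtlety is that Lemma~\ref{POINCARE} is phrased with the Gagliardo integral over $\Omega\times\Omega$ while the energy involves $\Q$; nonnegativity of the integrand makes this comparison immediate.
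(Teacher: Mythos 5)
Your proposal is correct and follows essentially the same route as the paper: mass conservation (Proposition~\ref{proposizionemassacostante}) to fix the mean, differentiation of the squared $L^2$-distance to the mean, Lemma~\ref{lemmaperparti} plus the homogeneous Neumann conditions to identify the dissipation with the energy, a Poincar\'e--Wirtinger inequality (classical if $\alpha\neq 0$, Lemma~\ref{POINCARE} if $\mu\not\equiv 0$), and Gronwall. The only cosmetic difference is that you subtract the constant $M$ at the outset and work with $v=u-M$, whereas the paper keeps $u$ and tracks $A(t)=\int_\Omega|u(x,t)-m|^2\,dx$ directly; the two computations are identical.
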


\section{Continuity properties}\label{KDMCLDFMVV3}

In this section we study some continuity properties induced by the 
nonlocal Neumann condition~\eqref{Neumannomogeneo} and we prove
Theorem~\ref{propcontinuità}.

\begin{proof}[Proof of Theorem~\ref{propcontinuità}]
First, we fix $x_0\in \R^N\setminus \overline{\Omega}$ and we 
prove that~$u$ is continuous at~$x_0$. Since~$\R^N\setminus \overline{\Omega}$ is an open set, there exists~$\rho>0$ such that~$|x_0-y|\geq \rho$ for any~$y\in \Omega$.
So, if~$x\in B_{\rho/2}(x_0)$, we have that
\[
|x-y|\geq |x_0-y|-|x_0-x|\geq \frac{\rho}{2}\quad\mbox{ for any } y\in\Omega.
\]
As a concequence, if $\rho$ is small enough,
for any $x\in B_{\rho/2}(x_0)$ and $y\in \Omega$
we have that 
\[
\frac{|u(y)|+1}{|x-y|^{N+2s}}
\leq \frac{2^{N+2s}}{\rho^{N+2s}}\Big(\|u\|_{L^\infty(\overline{\Omega})}+1\Big)
\leq \frac{2^{N+2}}{\rho^{N+2}}\Big(\|u\|_{L^\infty(\overline{\Omega})}+1\Big).
\]
{F}rom this, we deduce that 
\[
\begin{split}
\int_{(0,1)}c_{N,s}\int_\Omega \frac{|u(y)|+1}{|x-y|^{N+2s}}\,dy\,d\mu(s)
&\leq \frac{2^{N+2}}{\rho^{N+2}}|\Omega|\Big(\|u\|_{L^\infty(\overline{\Omega})}+1\Big)\int_{(0,1)}c_{N,s}\,d\mu(s)\\
&\le\frac{2^{N+2}}{\rho^{N+2}}|\Omega|\Big(\|u\|_{L^\infty(\overline{\Omega})}+1\Big) \sup_{s\in (0, 1)} c_{N, s} \int_{(0, 1)} d\mu(s)\\
&<+\infty.
\end{split}
\]
Thus, by the Neumann condition in~\eqref{NNN} and the dominated convergence theorem we obtain that
\[
\begin{split}
\lim_{x\to x_0}u(x) &=
\lim_{x\to x_0} \dfrac{\displaystyle \int_{(0,1)}c_{N,s}\int_\Omega \dfrac{u(y)}{|x-y|^{N+2s}}\,dy\,d\mu(s)}{\displaystyle\int_{(0,1)}c_{N,s}\int_\Omega \dfrac{1}{|x-y|^{N+2s}}\,dy\,d\mu(s)}\\
&=\dfrac{\displaystyle \int_{(0,1)}c_{N,s}\int_\Omega \dfrac{u(y)}{|x_0-y|^{N+2s}}\,dy\,d\mu(s)}{\displaystyle\int_{(0,1)}c_{N,s}\int_\Omega \dfrac{1}{|x_0-y|^{N+2s}}\,dy\,d\mu(s)}\\
&=u(x_0).
\end{split}
\]
This proves that $u$ is continuous at any point of 
$\R^N\setminus \overline{\Omega}$.

Now we show that $u$ is continuous at any point 
$p\in \partial\Omega$.
To do so, we need a more accurate argument, since numerators and denominators
may create singularities which have to be attentively estimated to detect cancellations.
We take a sequence $\{p_k\}_k$
converging to $p$ as $k$ goes to infinity.
Let $q_k$ be the projection of $p_k$ onto $\overline{\Omega}$.
Since $p\in \overline{\Omega}$, from the minimizing property of the 
projection we have that 
\[
|p_k-q_k|=\inf_{\xi\in \overline{\Omega}}|p_k-\xi|
\leq |p_k-p|,
\]
and so
\[
\lim_{k\to +\infty}|q_k-p|
\leq \lim_{k\to +\infty}\big(|q_k-p_k|+|p_k-p|\big)
\leq \lim_{k\to +\infty} 2|p_k-p|=0.
\]
Thus, since by assumption $u$ is continuous in $\overline{\Omega}$,
we have
\begin{equation}\label{pkconvergenza1}
\lim_{k\to +\infty}u(q_k)=u(p).
\end{equation}
Now we claim that
\begin{equation}\label{pkconvergenza2}
\lim_{k\to +\infty}\big(u(q_k)-u(p_k)\big)=0.
\end{equation}
To prove this, it is enough to consider the points $p_k$ that belong
to $\R^N \setminus\overline{\Omega}$. Indeed, if 
$p_k\in \overline{\Omega}$, we have $p_k=q_k$ and so 
\eqref{pkconvergenza2} is trivially satisfied.

We define $\nu_k:=(p_k-q_k)/|p_k-q_k|$, so that $\nu_k$ is the 
exterior normal of $\Omega$ at $q_k\in \partial\Omega$.
We consider a rigid motion~$\mathcal{R}_k$ such that~$\mathcal{R}_k q_k=0$ and~$\mathcal{R}_k \nu_k=e_N=(0,\cdots,0,1)$ (see Figure~\ref{RK}).  Let~$h_k:=|p_k-q_k|$. We notice that
\begin{equation}\label{dovevapk}
h_k^{-1}\mathcal{R}_k p_k=
h_k^{-1}\mathcal{R}_k (p_k-q_k)=\mathcal{R}_k \nu_k=e_N.
\end{equation}
Then, the domain
\[
\Omega_k:=h_k^{-1}\mathcal{R}_k \Omega
\]
has a vertical exterior normal at 0 and approaches the half space
$\Pi:=\{x_N<0\}$ as $k\to \infty$.

\begin{figure}[h]
\begin{center}
\includegraphics[scale=.42]{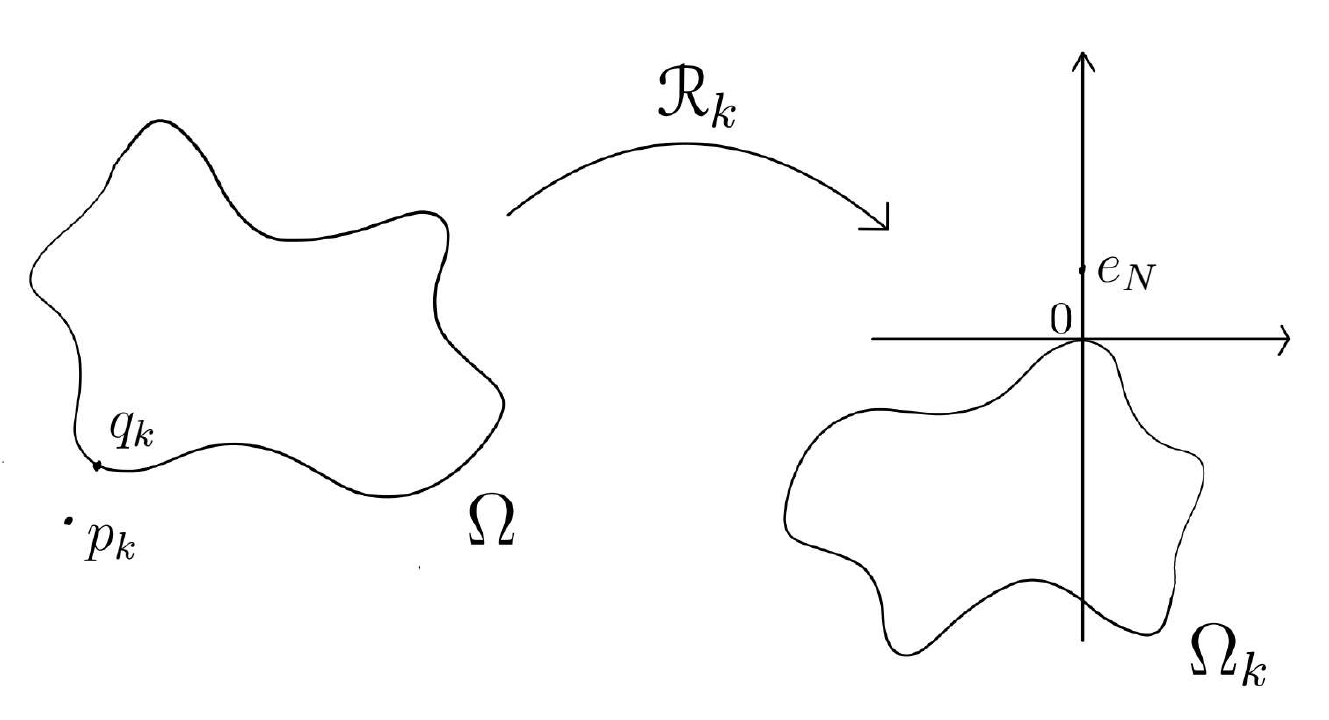}
\end{center}
\caption{The rigid motion $\mathcal R_k$ of $\Omega$.
}
\label{RK}
\end{figure}

Now, we use the homogeneous Neumann condition at $p_k$ to obtain
\[
\begin{split}
u(p_k)-u(q_k)&=\dfrac{\displaystyle\int_{(0,1)}c_{N,s}\int_{\Omega}\dfrac{u(y)}{|p_k-y|^{N+2s}}\,dy\,d\mu(s)}{\displaystyle\int_{(0,1)}c_{N,s}\int_{\Omega}\dfrac{1}{|p_k-y|^{N+2s}}\,dy\,d\mu(s)}-u(q_k)\\
&=\dfrac{\displaystyle\int_{(0,1)}c_{N,s}\int_{\Omega}\dfrac{u(y)-u(q_k)}{|p_k-y|^{N+2s}}\,dy\,d\mu(s)}{\displaystyle\int_{(0,1)}c_{N,s}\int_{\Omega}\dfrac{1}{|p_k-y|^{N+2s}}\,dy\,d\mu(s)}\\
&=I_1+I_2,
\end{split}
\]
with 
\[
I_1:=\dfrac{\displaystyle\int_{(0,1)}c_{N,s}\int_{\Omega\cap B_{\sqrt{h_k}}(q_k)}\dfrac{u(y)-u(q_k)}{|p_k-y|^{N+2s}}\,dy\,d\mu(s)}{\displaystyle\int_{(0,1)}c_{N,s}\int_{\Omega}\dfrac{1}{|p_k-y|^{N+2s}}\,dy\,d\mu(s)}
\]
and
\begin{equation}\label{I2}
I_2:=\dfrac{\displaystyle\int_{(0,1)}c_{N,s}\int_{\Omega\setminus B_{\sqrt{h_k}}(q_k)}\dfrac{u(y)-u(q_k)}{|p_k-y|^{N+2s}}\,dy\,d\mu(s)}{\displaystyle\int_{(0,1)}c_{N,s}\int_{\Omega}\dfrac{1}{|p_k-y|^{N+2s}}\,dy\,d\mu(s)}.
\end{equation}
We observe that from the uniform continuity of $u$ in $\Omega$, we
have
\[
\lim_{k\to +\infty}\sup_{\Omega\cap B_{\sqrt{h_k}}(q_k)}
|u(y)-u(q_k)|=0.
\]
Therefore,
\begin{equation}\label{I1to0}
\lim_{k\to +\infty} |I_1|\le\lim_{k\to +\infty} \, \sup_{\Omega\cap B_{\sqrt{h_k}}(q_k)} |u(y)-u(q_k)|= 0.
\end{equation}
Moreover, using the change of variable 
$\eta:=h_k^{-1}\mathcal{R}_k y$ and recalling~\eqref{dovevapk},
we obtain
\[
\begin{aligned}
|I_2|&\leq \ \dfrac{\displaystyle\int_{(0,1)}c_{N,s}\int_{\Omega\setminus B_{\sqrt{h_k}}(q_k)}\dfrac{|u(y)-u(q_k)|}{|p_k-y|^{N+2s}}\,dy\,d\mu(s)}{\displaystyle\int_{(0,1)}c_{N,s}\int_{\Omega}\dfrac{1}{|p_k-y|^{N+2s}}\,dy\,d\mu(s)}\\
&\leq 2\|u\|_{L^\infty(\overline{\Omega})} \ \dfrac{\displaystyle\int_{(0,1)}c_{N,s}\int_{\Omega\setminus B_{\sqrt{h_k}}(q_k)}\dfrac{1}{|p_k-y|^{N+2s}}\,dy\,d\mu(s)}{\displaystyle\int_{(0,1)}c_{N,s}\int_{\Omega}\dfrac{1}{|p_k-y|^{N+2s}}\,dy\,d\mu(s)} \\
&=2\|u\|_{L^\infty(\overline{\Omega})} \ \dfrac{\displaystyle\int_{(0,1)}c_{N,s}\int_{\Omega_k\setminus B_{1/{\sqrt{h_k}}}}\dfrac{h_k^N}{|h_k\mathcal{R}_k^{-1}e_N-h_k\mathcal{R}_k^{-1}\eta|^{N+2s}}\,d\eta\,d\mu(s)}{\displaystyle\int_{(0,1)}c_{N,s}\int_{\Omega_k}\dfrac{h_k^N}{|h_k\mathcal{R}_k^{-1}e_N-h_k\mathcal{R}_k^{-1}\eta|^{N+2s}}\,d\eta\,d\mu(s)} \\
&=2\|u\|_{L^\infty(\overline{\Omega})} \ \dfrac{\displaystyle\int_{(0,1)}c_{N,s}\int_{\Omega_k\setminus B_{1/{\sqrt{h_k}}}}\dfrac{h_k^{-2s}}{|e_N-\eta|^{N+2s}}\,d\eta\,d\mu(s)}{\displaystyle\int_{(0,1)}c_{N,s}\int_{\Omega_k}\dfrac{h_k^{-2s}}{|e_N-\eta|^{N+2s}}\,d\eta\,d\mu(s)}.
\end{aligned}
\]
Now, we observe that, if 
$\eta \in \Omega_k\setminus B_{1/{\sqrt{h_k}}}$, we have
\[
\begin{aligned}&
|e_N-\eta|^{N+2s}=|e_N-\eta|^{N+s}|e_N-\eta|^s
\geq |e_N-\eta|^{N+s} (|\eta|-1)^s \\
&\qquad\qquad=|e_N-\eta|^{N+s} (h_k^{-1/2}-1)^s
\geq |e_N-\eta|^{N+s} h_k^{-s/4},
\end{aligned}
\]
for $k$ large enough, and consequently
\begin{equation}\label{I2minoreuguale}
|I_2|\leq 2\|u\|_{L^\infty(\overline{\Omega})} \ \dfrac{\displaystyle\int_{(0,1)}c_{N,s}\int_{\Omega_k}\dfrac{h_k^{-\frac{7}{4}s}}{|e_N-\eta|^{N+2s}}\,d\eta\,d\mu(s)}{\displaystyle\int_{(0,1)}c_{N,s}\int_{\Omega_k}\dfrac{h_k^{-2s}}{|e_N-\eta|^{N+2s}}\,d\eta\,d\mu(s)}.
\end{equation}
Then, setting
\begin{equation}\label{I2mMM}
m:=\min\left\{n\in \N:\, \mbox{supp}(\mu) \cap\Bigg(\left(\frac{7}{8}\right)^n,\left(\frac{7}{8}\right)^{n-1}\Bigg]\neq \emptyset\right\},
\end{equation}
we can multiply and divide~\eqref{I2minoreuguale} by $h_k^{2(7/8)^m}$ to obtain
\begin{equation}\label{I2minoreuguale7/8}
|I_2|\leq 2\|u\|_{L^\infty(\overline{\Omega})}
\dfrac{\displaystyle\int_{(0,(7/8)^{m-1}]}c_{N,s}\int_{\Omega_k}\dfrac{h_k^{2\left(\frac{7}{8}\right)^m-\frac{7}{4}s}}{|e_N-\eta|^{N+2s}}\,d\eta\,d\mu(s)}{\displaystyle\int_{(0,(7/8)^{m-1}]}c_{N,s}\int_{\Omega_k}\dfrac{h_k^{2\left(\frac{7}{8}\right)^m-2s}}{|e_N-\eta|^{N+2s}}\,d\eta\,d\mu(s)}.
\end{equation}
Recalling that $\Omega_k$ converges to the half space $\Pi$, if~$k$ is large enough, then we can suppose that~$\Omega_k \subset \R^N\setminus B_{1/2}(e_N)$. In this way, 
\[
\begin{aligned}
c_{N,s}&\int_{\Omega_k}\dfrac{1}{|e_N-\eta|^{N+2s}}\,d\eta\,d\mu(s)
\leq c_{N,s}\int_{\R^N\setminus B_{1/2}(e_N)}\dfrac{1}{|e_N-\eta|^{N+2s}}\,d\eta\,d\mu(s) \\
&\quad=c_{N,s} \,\omega_{N-1}\int_{1/2}^{+\infty}\rho^{-1-2s}\,d\rho
=\frac{c_{N,s}}{s}\,2^{2s-1}\omega_{N-1},
\end{aligned}
\]
which is uniformly bounded for $s\in (0,1)$. 

Moreover, in view of~\eqref{I2mMM} we can focus on the case $s<(7/8)^{m-1}$
(otherwise we exit the support of~$\mu$).
As a result, we see that $$h_k^{2\left(\frac{7}{8}\right)^m-\frac{7}{4}s}\to 0
\quad{\mbox{ as $k\to +\infty$}}$$ and therefore, by the
dominated convergence theorem,
\begin{equation}\label{I_2numeratore}
\lim_{k\to+\infty}
\displaystyle\int_{(0,(7/8)^{m-1}]}c_{N,s}\int_{\Omega_k}\dfrac{h_k^{2\left(\frac{7}{8}\right)^m-\frac{7}{4}s}}{|e_N-\eta|^{N+2s}}\,d\eta\,d\mu(s)=0.
\end{equation}

We observe that, for any $s\in ((7/8)^{m},(7/8)^{m-1}]$, 
\[
\lim_{k\to +\infty} c_{N,s}\int_{\Omega_k}\dfrac{h_k^{2\left(\frac{7}{8}\right)^m-2s}}{|e_N-\eta|^{N+2s}}\,d\eta=+\infty.
\]
Hence, by Fatou's lemma and~\eqref{I2mMM},
\begin{equation}\label{I2denominatore}
\liminf_{k\to+\infty}
\int_{((7/8)^{m},(7/8)^{m-1}]}c_{N,s}\int_{\Omega_k}\dfrac{h_k^{2\left(\frac{7}{8}\right)^m-2s}}{|e_N-\eta|^{N+2s}}\,d\eta\,d\mu(s)=+\infty.
\end{equation}
Using~\eqref{I_2numeratore} and~\eqref{I2denominatore} in
\eqref{I2minoreuguale7/8}, we obtain that
\begin{equation}\label{LIMI2}
\lim_{k\to+\infty} |I_2|=0.
\end{equation}
This and~\eqref{I1to0} imply~\eqref{pkconvergenza2}, as desired.

{F}rom~\eqref{pkconvergenza1} and~\eqref{pkconvergenza2} we
conclude that
\[
\lim_{k\to+\infty}u(p_k)=u(p),
\]
that is $u$ is continuous at $p$, as desired.
\end{proof}

Next result is a consequence of Theorem~\ref{propcontinuità}.

\begin{corollary}\label{corollariocontinuità}
Let $\Omega\subset \R^N$ be a domain with $C^1$ boundary and 
let $v_0\in C(\overline{\Omega})$. Let 
\[
v(x):=
\begin{cases}
v_0(x)    & \mbox{ if } x\in \overline{\Omega},
\\ 

\\
\dfrac{\displaystyle\int_{(0,1)} c_{N, s} \int_{\Omega}\dfrac{v_0(y)}{|x-y|^{N+2s}}\,dy\,d\mu(s)}{\displaystyle\int_{(0,1)}c_{N, s}\int_{\Omega}\dfrac{1}{|x-y|^{N+2s}}\,dy\,d\mu(s)}
& \mbox{ if } x\in \R^N\setminus \overline{\Omega}.
\end{cases}
\]
Then $v\in C(\R^N)$, it satisfies $v=v_0$ in $\overline{\Omega}$
and 
\[
\int_{(0,1)}\Ns u\,d\mu(s)=0 \quad\mbox{ in } \R^N\setminus \overline{\Omega}.
\]
\end{corollary}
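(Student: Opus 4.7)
The plan is to verify the three assertions in order, with the continuity claim reducing to a direct application of Theorem~\ref{propcontinuità}; the identity $v=v_0$ on $\overline{\Omega}$ is tautological from the piecewise definition of $v$, so the real content lies in checking the Neumann condition and then invoking the previous theorem.

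To establish the homogeneous Neumann condition on $\R^N\setminus\overline{\Omega}$, I would fix $x\in\R^N\setminus\overline{\Omega}$ and use the definition~\eqref{defNsu} of $\Ns$ together with the fact that $v(y)=v_0(y)$ for $y\in\Omega$, obtaining
\begin{align*}
\int_{(0,1)}\Ns v(x)\,d\mu(s)
&=\int_{(0,1)}c_{N,s}\int_\Omega\frac{v(x)-v_0(y)}{|x-y|^{N+2s}}\,dy\,d\mu(s)\\
&=v(x)\int_{(0,1)}c_{N,s}\int_\Omega\frac{dy}{|x-y|^{N+2s}}\,d\mu(s)\\
&\quad-\int_{(0,1)}c_{N,s}\int_\Omega\frac{v_0(y)}{|x-y|^{N+2s}}\,dy\,d\mu(s).
\end{align*}
Substituting the explicit formula defining $v(x)$ for $x\in\R^N\setminus\overline{\Omega}$ shows that these two terms coincide and cancel. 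The only mild technicality is justifying Fubini/Tonelli in order to pull $v(x)$ out and split the two pieces; this is unproblematic because $\mathrm{dist}(x,\Omega)>0$ bounds the kernel uniformly in $y$, $v_0$ is bounded on $\overline{\Omega}$, the measure $\mu$ is finite on $(0,1)$, and $c_{N,s}$ is continuous in $s$.

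With the Neumann condition verified, the global continuity of $v$ on $\R^N$ follows at once from Theorem~\ref{propcontinuità}: by hypothesis $v=v_0\in C(\overline{\Omega})$, and the displayed identity $\int_{(0,1)}\Ns v\,d\mu(s)=0$ in $\R^N\setminus\overline{\Omega}$ has just been shown, so both hypotheses of that theorem are met. Hence the substantive work has already been carried out in Theorem~\ref{propcontinuità}; no essential obstacle arises in the corollary itself, which merely packages a concrete construction of a function to which that theorem applies.
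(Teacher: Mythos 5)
Your proposal is correct and follows the same route as the paper: the paper's proof simply states that the Neumann condition holds ``by construction'' and then invokes Theorem~\ref{propcontinuità}, exactly as you do. The only difference is that you spell out the cancellation and the (routine) justification of splitting the integrals, which the paper leaves implicit.
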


\begin{proof}
By construction, $v=v_0$ in $\overline{\Omega}$
and $\int_{(0,1)}\Ns u\,d\mu(s)=0$ in $\R^N\setminus \overline{\Omega}$. Hence,  we can use Theorem~\ref{propcontinuità} to obtain that $v\in C(\R^N)$.
\end{proof}

We now study the behavior of the normalized Neumann function 
defined as
\begin{equation}\label{NTILDEE}
\widetilde{\mathscr N}u(x):=\frac{\displaystyle\int_{(0,1)}\Ns u(x)\,d\mu(s)}{\displaystyle\int_{(0,1)}c_{N, s}\int_{\Omega}\dfrac{1}{|x-y|^{N+2s}}\,dy\,d\mu(s)}.
\end{equation}

\begin{proposition}\label{ASMDNSDIKF}
Let $\Omega\subset \R^N$ be a domain with $C^1$ boundary and
let $u\in C(\R^N)$. Then,
\begin{equation}\label{limiteNtilde}
\lim_{\substack{x\to \partial \Omega \\ x\in \R^N\setminus \overline{\Omega}}}\widetilde{\mathscr N}u(x)=0.
\end{equation}
\end{proposition}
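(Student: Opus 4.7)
The plan is to write $\widetilde{\mathscr N}u(x)$ as a kernel-weighted ``average'' of $u(x)-u(y)$ over $y\in\Omega$, and then exploit two ingredients: the continuity of $u$ at the target boundary point $p\in\partial\Omega$, and the fact that the normalizing denominator blows up as $x\to p$. Using \eqref{defNsu}, I would first rewrite
\[
\widetilde{\mathscr N}u(x)=\frac{N(x)}{D(x)},\qquad
N(x):=\int_{(0,1)}c_{N,s}\int_\Omega \frac{u(x)-u(y)}{|x-y|^{N+2s}}\,dy\,d\mu(s),
\]
with $D(x)$ the denominator appearing in \eqref{NTILDEE}.

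The crucial step is to prove that $D(x)\to +\infty$ as $x\to p\in\partial\Omega$ with $x\in\R^N\setminus\overline\Omega$. Since $\partial\Omega$ is $C^1$, a local graph/cone argument at $p$ produces constants $c_0,r_0>0$ with $|\Omega\cap B_r(p)|\ge c_0 r^N$ for all $r\in(0,r_0]$. Taking $r:=2|x-p|$ and using $|x-y|\le 3|x-p|$ on $B_{2|x-p|}(p)$, I would deduce the uniform (in $s$) lower bound
\[
\int_\Omega \frac{dy}{|x-y|^{N+2s}}\;\ge\; \frac{c_0\,(2|x-p|)^N}{(3|x-p|)^{N+2s}}\;\ge\; c_1\,|x-p|^{-2s},
\]
for some $c_1>0$ independent of $s\in(0,1)$. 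Since $\mu$ is nontrivial on the open interval $(0,1)$, I would fix $s_0\in(0,1)$ and $\eta>0$ with $\mu([s_0-\eta,s_0+\eta])>0$ and $[s_0-\eta,s_0+\eta]\subset(0,1)$; on this compact subinterval, $c_{N,s}$ is bounded below by some $c_2>0$, and for $|x-p|<1$ one has $|x-p|^{-2s}\ge |x-p|^{-2(s_0-\eta)}$. Integrating, this yields $D(x)\ge c_1 c_2\,|x-p|^{-2(s_0-\eta)}\,\mu([s_0-\eta,s_0+\eta])\to +\infty$.

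For the numerator, I would split $\Omega=(\Omega\cap B_\delta(p))\cup(\Omega\setminus B_\delta(p))$. Given $\varepsilon>0$, by continuity of $u$ at $p$ I would pick $\delta>0$ so small that $|u(y)-u(p)|<\varepsilon/2$ for $y\in B_\delta(p)$; then for $x$ close enough to $p$ also $|u(x)-u(p)|<\varepsilon/2$, giving $|u(x)-u(y)|<\varepsilon$ on $\Omega\cap B_\delta(p)$. The corresponding piece of $N(x)$ is bounded in absolute value by $\varepsilon D(x)$. On the far piece $\Omega\setminus B_\delta(p)$, for $x$ with $|x-p|<\delta/2$ one has $|x-y|\ge|y-p|/2$, so the kernel is dominated by a fixed $L^1(\mu)$-integrable majorant (using that $\widetilde{\mathscr N}u$ is well-defined near $p$), giving a uniform bound $|N_{\text{far}}(x)|\le C_\delta$ independent of $x$ near $p$. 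Combining,
\[
|\widetilde{\mathscr N}u(x)|\;\le\; \varepsilon+\frac{C_\delta}{D(x)},
\]
and letting $x\to p$ the second term vanishes, so $\limsup_{x\to p}|\widetilde{\mathscr N}u(x)|\le\varepsilon$. The arbitrariness of $\varepsilon$ gives \eqref{limiteNtilde}.

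The hardest part is the denominator estimate: one must obtain a lower bound on $\int_\Omega|x-y|^{-N-2s}\,dy$ that is \emph{uniform} in $s\in(0,1)$ (for which the $C^1$ cone condition is essential) and then couple it with the nontriviality of $\mu$ on the \emph{open} interval $(0,1)$ — where $c_{N,s}$ is strictly positive — to pass from a single-$s$ singularity to the $\mu$-integrated blow-up. The numerator control is then a routine ``continuity beats bounded mass'' argument.
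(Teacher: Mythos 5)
Your argument is correct, but it follows a genuinely different route from the paper's. The paper obtains Proposition~\ref{ASMDNSDIKF} as a quick consequence of the global continuity result: it invokes Corollary~\ref{corollariocontinuità} to produce the extension $v\in C(\R^N)$ of $u|_{\overline\Omega}$ satisfying $\int_{(0,1)}\Ns v\,d\mu(s)=0$ outside $\overline\Omega$, observes the pointwise identity $\widetilde{\mathscr N}u(x)=\widetilde{\mathscr N}u(x)-\widetilde{\mathscr N}v(x)=u(x)-v(x)$ for $x\in\R^N\setminus\overline\Omega$, and concludes from the continuity of $u$ and $v$ at $\partial\Omega$; the entire analytic burden is thus carried by Theorem~\ref{propcontinuità}, whose proof contains the delicate blow-up estimate on the denominator. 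You instead prove the statement directly: your measure-density lower bound $|\Omega\cap B_r(p)|\ge c_0r^N$ gives $\int_\Omega|x-y|^{-N-2s}\,dy\ge c_1|x-p|^{-2s}$ uniformly in $s$, and restricting the $\mu$-integration to a compact subinterval $[s_0-\eta,s_0+\eta]\subset(0,1)$ of positive $\mu$-measure (where $c_{N,s}$ is bounded below --- this is the right move, since $c_{N,s}$ vanishes at both endpoints) yields $D(x)\gtrsim|x-p|^{-2(s_0-\eta)}\to+\infty$; the near/far splitting of the numerator then works because $u$ is assumed continuous on all of $\R^N$, which controls $|u(x)-u(y)|$ on $\Omega\cap B_\delta(p)$ \emph{without} any cancellation analysis, while the far part is bounded using $|\Omega|<\infty$ and $\sup_{\overline\Omega}|u|<\infty$ (both implicit in the paper's standing assumptions). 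What each approach buys: the paper's proof is two lines given the machinery already built; yours is self-contained, avoids Theorem~\ref{propcontinuità} altogether, and makes transparent that only a corkscrew-type density condition at $\partial\Omega$ (rather than the full $C^1$ flattening used in the proof of Theorem~\ref{propcontinuità}) is needed for this particular conclusion. All steps check out, including the uniformity in $s$ of the constant $c_12^N3^{-(N+2)}$ and the inner-regularity argument producing the compact interval of positive $\mu$-measure.
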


\begin{proof}
Let $\{x_k\}_k$ be a sequence in $\R^N\setminus \overline{\Omega}$
such that $x_k$ converges to $x_\infty\in \partial \Omega$ as $k$
goes to infinity.

{F}rom Corollary~\ref{corollariocontinuità} applied with $v_0:=u$,
there exists~$v\in C(\R^N)$ such that~$v=u$ in $\overline{\Omega}$
and~$\int_{(0,1)}\Ns v\,d\mu(s)=0$ in~$\R^N\setminus \overline{\Omega}$. By the continuity of~$u$ and~$v$ we also have
that 
\begin{equation}\label{limite5.14}
\lim_{k\to +\infty}\big(u(x_k)-v(x_k)\big)
=\big(u(x_\infty)-v(x_\infty)\big)=0.
\end{equation}
Moreover,
\[
\begin{aligned}
\widetilde{\mathscr N}u(x_k)&=\widetilde{\mathscr N}u(x_k)-\widetilde{\mathscr N}v(x_k)\\
&=\frac{\displaystyle\int_{(0,1)}c_{N, s}\int_{\Omega}\dfrac{u(x_k)-u(y)}{|x-y|^{N+2s}}\,dy\,d\mu(s)-\displaystyle\int_{(0,1)}c_{N, s}\int_{\Omega}\dfrac{v(x_k)-v(y)}{|x-y|^{N+2s}}\,dy\,d\mu(s)}{\displaystyle\int_{(0,1)}c_{N, s}\int_{\Omega}\dfrac{1}{|x-y|^{N+2s}}\,dy\,d\mu(s)}\\
&=\frac{\displaystyle\int_{(0,1)}c_{N, s}\int_{\Omega}\dfrac{u(x_k)-v(x_k)}{|x-y|^{N+2s}}\,dy\,d\mu(s)}{\displaystyle\int_{(0,1)}c_{N, s}\int_{\Omega}\dfrac{1}{|x-y|^{N+2s}}\,dy\,d\mu(s)}\\
&=u(x_k)-v(x_k).
\end{aligned}
\]
This and~\eqref{limite5.14} imply that
\[
\lim_{k\to +\infty}\widetilde{\mathscr N}u(x_k)=0,
\]
that is~\eqref{limiteNtilde}, which concludes the proof.
\end{proof}

\section{The superposition of fractional perimeters}\label{OSJDNDNMwefV}
In this section we discuss the relation between the superposition of nonlocal Neumann derivatives and the notion of superposition of fractional perimeters, which we introduce in here.

At first, we recall that, in the local case,  taking $\partial_\nu u =1$, the perimeter of $\Omega$
(often denoted by~$\mbox{Per}(\Omega)$) can be obtained as
\[
|\partial\Omega|=\int_{\partial\Omega} d\mathscr H^{N-1}_x = \int_{\partial\Omega} \partial_\nu u \, d\mathscr H^{N-1}_x.
\]
In the nonlocal case, discussed in \cite[Remark 3.4]{MR3651008}, a similar result was established
by considering a notion of normalized nonlocal derivative. We now extend this idea to cover the case of superpositions of operators (and perimeters) of different order.
To this end, one considers
\[w_{s, \Omega}(x):= c_{N, s} \int_\Omega \frac{dy}{|x-y|^{N+2s}}\]
and the normalized nonlocal derivative given by
\begin{equation}\label{NRSMCV}
\widetilde{\Ns}u(x):= \dfrac{\mathscr N_s u(x)}{w_{s, \Omega}(x)}\quad\mbox{ for } x\in\R^N\setminus\overline\Omega.
\end{equation}
In this setting, if $\widetilde{\Ns}u(x)=1$ for any $x\in\R^N\setminus\overline\Omega$, then the fractional perimeter, introduced in \cite{MR2675483} turns to be the integral of $\Ns u(x)$ over $\R^N\setminus\Omega$, namely
\begin{equation}\label{PERFRAC}
\mbox{Per}_s(\Omega) = \int_{\R^N\setminus\Omega} \Ns u(x) dx,
\end{equation}see~\cite[Remark 3.4]{MR3651008}.

In our setting, the normalized superposition of nonlocal derivatives is defined in \eqref{NTILDEE} and can be written as
\[
\widetilde{\mathscr N} u(x) = \dfrac{\displaystyle\int_{(0, 1)} \mathscr N_s \, u(x)\, d\mu(s)}{\displaystyle\int_{(0, 1)}w_{s, \Omega}(x)\, d\mu(s)}\quad\mbox{ for } x\in\R^N\setminus\overline\Omega.
\]
The reader can appreciate similarities and differences with respect to~\eqref{NRSMCV}.

Also, if $\widetilde{\mathscr N} u(x)=1$, the superposition of fractional perimeters, provided that $\mbox{Per}_s(\Omega)\in L^1((0, 1),  d\mu)$, can be obtained as
\begin{equation}\label{PERSUP}
\begin{split}
\int_{(0, 1)} \mbox{Per}_s(\Omega) \, d\mu(s)&:= \int_{(0, 1)} c_{N, s} \left(\int_{\Omega}\int_{\R^N\setminus\Omega} \dfrac{1}{|x-y|^{N+2s}} dx dy\right) d\mu(s)\\
&=\int_{(0, 1)} \left(\,\int_{\R^N\setminus\Omega} w_{s, \Omega}(x) \,dx\right) d\mu(s)\\
&=\int_{\R^N\setminus\Omega} \left(\,\int_{(0, 1)} w_{s, \Omega}(x) \, d\mu(s)\right) dx\\
&= \int_{\R^N\setminus\Omega} \left(\,\int_{(0, 1)} w_{s, \Omega}(x) \, d\mu(s)\right) \widetilde{\mathscr N}u(x)\, dx\\
&= \int_{\R^N\setminus\Omega} \left(\,\int_{(0, 1)} \mathscr N_s \, u(x) \, d\mu(s)\right)\, dx\\
&= \int_{(0, 1)} \left(\,\int_{\R^N\setminus\Omega} \mathscr N_s \, u(x) \, d\mu(s)\right)\, dx.
\end{split}
\end{equation}
This identity is interesting, because it recovers the superposition of fractional perimeters
from the superposition of fractional Neumann derivatives.

We stress that \eqref{PERSUP} can not be obtained by directly integrating the identity in \eqref{PERFRAC}.  This would be true if $\widetilde\Ns u(x) =1$ for any $s\in\mbox{supp}(\mu)$ which, in general, is not guaranteed by the condition $\widetilde{\mathscr N} u(x)=1$.

It is also worth noting that, if $\mu =\delta_s$ for some fractional power $s\in (0, 1)$, then \eqref{PERSUP} turns into \eqref{PERFRAC}.

\begin{appendix}

\section{Proof of Theorem \ref{uptoconstant}}\label{appendix1.3}
We now provide a complete proof of Theorem~\ref{uptoconstant}.

\begin{proof}[Proof of Theorem \ref{uptoconstant}]
We first deal with the case $g\equiv 0$ and $h\equiv 0$, corresponding to the homogeneous $(\alpha,\mu)$-Neumann conditions introduced in Definition~\ref{NDEFN}.
In this case, we assume that~$f\not \equiv 0$, otherwise the
constant is clearly the only solution, due to Lemma~\ref{lemmasoluzionecostante}.

Taking $\xi\in L^2(\Omega)$, we look for a function 
$v\in \mathcal{H}_{\alpha,\mu}(\Omega)$ which solves
\begin{equation}\label{problemaausiliariogh=0}
\begin{split}
\int_\Omega v\varphi\,dx&+\alpha \int_\Omega \nabla v\cdot\nabla \varphi\, dx\\
& +\int_{(0, 1)} \frac{c_{N, s}}{2} \iint_{\Q} \frac{(v(x)-v(y))(\varphi(x)-\varphi(y))}{|x-y|^{N+2s}} dx\,dy\, d\mu(s)
=\int_\Omega \xi\varphi\,dx,
\end{split}
\end{equation}
for any $\varphi\in \mathcal{H}_{\alpha,\mu}(\Omega)$, with
homogeneous $(\alpha,\mu)$-Neumann conditions.

Let us consider the functional 
$\mathcal{F}:\mathcal{H}_{\alpha,\mu}(\Omega) \to \R$ defined as
\[
\mathcal{F}(\varphi):=\int_\Omega \xi \varphi\,dx\quad \mbox{ for any } \varphi \in \mathcal{H}_{\alpha,\mu}(\Omega).
\]
Clearly, the functional $\mathcal{F}$ is linear, and it is also
continuous on $\mathcal{H}_{\alpha,\mu}(\Omega)$. Indeed, by Proposition~\ref{propcompactembedding}, we have
\[
|\mathcal{F}(\varphi)|\leq \int_\Omega |\xi|\,|\varphi|\,dx
\leq \|\xi\|_{L^2(\Omega)}\|\varphi\|_{L^2(\Omega)}
\leq \|\xi\|_{L^2(\Omega)}\|\varphi\|_{\alpha,\mu}.
\]
Thus, from the Riesz representation theorem we know that, for any given $\xi\in L^2(\Omega)$,  problem~\eqref{problemaausiliariogh=0} admits a unique solution $v\in \mathcal{H}_{\alpha,\mu}(\Omega)$.

Moreover, taking $\varphi:=v$ in~\eqref{problemaausiliariogh=0} and using again Proposition~\ref{propcompactembedding}, we obtain that
\begin{equation}\label{normaalfamu<normaL2}
\|v\|_{\alpha,\mu}=\int_\Omega \xi v\,dx
\leq C\|\xi\|_{L^2(\Omega)}.
\end{equation}
Now, we can define the operator~$T_0:L^2(\Omega)\to \mathcal{H}_{\alpha,\mu}(\Omega)$ as~$T_0 \xi:=v$,
and we also denote by $T$ its restriction operator in $\Omega$,
that is
\[
T \xi:=T_0 \xi\big|_{\Omega}.
\]
We remark that the function $T_0 \xi$ is defined in the whole of 
$\R^N$, while $T\xi$ is its restriction in $\Omega$. In light 
of this, we see that $T:L^2(\Omega) \to L^2(\Omega)$.

We claim that $T$ is a compact operator. To show this, we take a bounded 
sequence $\{\xi_k\}_{k\in \N}$ in $L^2(\Omega)$.
{F}rom~\eqref{normaalfamu<normaL2} we have that the sequence
$\{T_0\xi_k\}_{k\in \N}$ is bounded in $\mathcal{H}_{\alpha,\mu}(\Omega)$,
and from Proposition~\ref{propcompactembedding} we deduce that
the sequence $\{T\xi_k\}_{k\in \N}$ admits a subsequence that 
converges strongly in $L^2(\Omega)$. This proves that $T$ is compact.

Now, we claim that $T$ is a self-adjoint operator. To show this,
we take $\xi_1,\xi_2\in C^\infty_0(\Omega)$ and we use the weak
formulation in~\eqref{problemaausiliariogh=0}. {F}rom this, we have that for any $\varphi, \phi\in \mathcal{H}_{\alpha,\mu}(\Omega)$,
\begin{equation}\label{selfadjoint1}
\begin{aligned}
\int_\Omega T_0\xi_1&\varphi\,dx+\alpha \int_\Omega \nabla T_0\xi_1\cdot\nabla \varphi\, dx \\ 
&+\int_{(0, 1)} \frac{c_{N, s}}{2} \iint_{\Q} \frac{(T_0\xi_1(x)-T_0\xi_1(y))(\varphi(x)-\varphi(y))}{|x-y|^{N+2s}} dx\,dy\, d\mu(s) 
=\int_\Omega \xi_1\varphi\,dx
\end{aligned}
\end{equation}
and
\begin{equation}\label{selfadjoint2}
\begin{aligned}
\int_\Omega T_0\xi_2&\phi\,dx+\alpha \int_\Omega \nabla T_0\xi_2\cdot\nabla \phi\, dx \\ 
&+\int_{(0, 1)} \frac{c_{N, s}}{2} \iint_{\Q} \frac{(T_0\xi_2(x)-T_0\xi_2(y))(\phi(x)-\phi(y))}{|x-y|^{N+2s}} dx\,dy\, d\mu(s) 
=\int_\Omega \xi_2\phi\,dx.
\end{aligned}
\end{equation}
Taking $\varphi:=T_0\xi_2$ in~\eqref{selfadjoint1} and
$\phi:=T_0\xi_1$ in~\eqref{selfadjoint2}, we obtain that
\[
\int_\Omega \xi_1\, T_0\xi_2\,dx=\int_\Omega \xi_2\, T_0\xi_1\,dx\quad\mbox{ for any } \xi_1,\xi_2\in C^\infty_0(\Omega).
\]
Thus, since
$T_0\xi_1=T\xi_1$ and $T_0\xi_2=T\xi_2$ in $\Omega$, we 
conclude that 
\begin{equation}\label{selfadjoint3}
\int_\Omega \xi_1\, T\xi_2\,dx=\int_\Omega \xi_2\, T\xi_1\,dx\quad\mbox{ for any } \xi_1,\xi_2\in C^\infty_0(\Omega).
\end{equation}
If $\xi_1,\xi_2\in L^2(\Omega)$, then there exist two sequences~$\{\xi_{1,k}\}_{k\in \N}$ and~$\{\xi_{2,k}\}_{k\in \N}$ in~$C^\infty_0(\Omega)$ such that~$\xi_{1,k}$ converges to~$\xi_1$
and~$\xi_{2,k}$ converges to~$\xi_2$ in~$L^2(\Omega)$.
Then, from~\eqref{selfadjoint3}, we have that
\begin{equation}\label{selfadjoint4}
\int_\Omega \xi_{1,k}\, T\xi_{2,k}\,dx
=\int_\Omega \xi_{2,k}\, T\xi_{1,k}\,dx.
\end{equation}
Moreover, from~\eqref{normaalfamu<normaL2} we deduce that~$T\xi_{1,k}$ and~$T\xi_{2,k}$ converge respectively to~$T\xi_1$ and~$T\xi_2$ in~$L^2(\Omega)$. Hence,
\[
\lim_{k\to +\infty}\int_\Omega \xi_{1,k}\, T\xi_{2,k}\,dx
=\int_\Omega \xi_{1}\, T\xi_{2}\,dx
\]
and
\[
\lim_{k\to +\infty}\int_\Omega \xi_{2,k}\, T\xi_{1,k}\,dx
=\int_\Omega \xi_{2}\, T\xi_{1}\,dx.
\]
These facts and~\eqref{selfadjoint4} imply that 
\[
\int_\Omega \xi_{1}\, T\xi_{2}\,dx=\int_\Omega \xi_{2}\, T\xi_{1}\,dx \quad\mbox{ for any } \xi_1,\xi_2\in L^2 (\Omega).
\]
This proves that $T$ is a self-adjoint operator.

Now, we claim that
\begin{equation}\label{claimkercostanti}
Ker(Id-T) \mbox{ consists only of constant functions}.
\end{equation}
We first check that constant functions are in $Ker(Id-T)$. For this, let~$c\in\R$. We take a function costantly equal to~$c$ and observe that~$\Delta c=0=(-\Delta)^s c$ in~$\Omega$, and so~$L_{\alpha,\mu}(c)+c=c$. Moreover, we see that~$\partial_\nu c=0$ on~$\partial \Omega$ and 
\[
\int_{(0, 1)}\Ns c\,d\mu(s)=0
\]
in $\R^N\setminus \overline\Omega$. This shows that $T_0 c=c$ in $\R^N$,
and so $T c=c$ in $\Omega$, wich implies that $c\in Ker(Id-T)$.

Now, we show that if $\xi\in Ker(Id-T)\subseteq L^2(\Omega)$,
then $\xi$ is constant. By construction, 
$T_0 \xi\in \mathcal{H}_{\alpha,\mu}(\Omega)$ is a weak solution
of 
\begin{equation}\label{costanti1}
L_{\alpha,\mu}(T_0 \xi)+T_0 \xi =\xi\quad \mbox{ in }\Omega
\end{equation}
with
\begin{equation}\label{costanti2}
\partial_\nu (T_0 \xi)=0 \;\mbox{ on }\partial \Omega
\quad \mbox{and} \quad
\int_{(0, 1)}\Ns (T_0 \xi)\,d\mu(s)=0\; \mbox{ in }
\R^N\setminus \Omega.
\end{equation}
On the other hand, since $\xi \in Ker(Id-T)$ we have that
\begin{equation}\label{costanti3}
\xi =T\xi =T_0 \xi\quad \mbox{ in }\Omega.
\end{equation}
Hence, from~\eqref{costanti1}, $T_0 \xi$ is a weak solution of
\[
L_{\alpha,\mu}(T_0 \xi)=0 \quad\mbox{ in }\Omega.
\]
This, \eqref{costanti2} and Lemma~\ref{lemmasoluzionecostante}
imply that~$T_0 \xi$ is constant. Then, from~\eqref{costanti3} 
we have that~$\xi$ is constant in~$\Omega$, which concludes the 
proof of~\eqref{claimkercostanti}.

{F}rom~\eqref{claimkercostanti} and the Fredholm Alternative,
we have that
\[
Im(Id-T)=Ker(Id-T)^\perp=\{\mbox{constant functions}\}^\perp,
\]
where the orthogonality notion is in $L^2(\Omega)$, that is
\begin{equation}\label{immagineI-T}
Im(Id-T)=\left\{f\in L^2(\Omega):\,
\int_\Omega f\,dx =0 \right\}.
\end{equation}
Thus, taking $f$ such that $\int_\Omega f\,dx=0$, by~\eqref{immagineI-T} we know that there exists~$w\in L^2(\Omega)$
such that~$f=w-Tw$. Now, we set~$u:=T_0 w$. By construction,
$u$ is a weak solution of
\[
L_{\alpha,\mu}(T_0 w)+T_0 w= w \mbox{ in } \Omega,
\]
with $\partial_\nu u=0$ in 
$\partial \Omega$ and 
\[
\int_{(0, 1)}\Ns u\,d\mu(s)=0
\]
in $\R^N\setminus \overline\Omega$. As a consequence, 
\[
f=w-Tw=w-T_0 w= L_{\alpha,\mu}(T_0 w)=L_{\alpha,\mu}(u)\quad\mbox{ in } \Omega
\]
in the weak sense, so that $u$ is the desired solution.

Viceversa, let $u\in \mathcal{H}_{\alpha,\mu}(\Omega)$ be a weak 
solution of 
\[
\begin{cases}
L_{\alpha,\mu}(u)=f  \quad \mbox{ in } \Omega
\\
\mbox{with homogeneous } (\alpha,\mu)\mbox{-Neumann conditions}.
\end{cases}
\]
We set $w:=f+u$ and observe that
\[
L_{\alpha,\mu}(u)+u=f+u=w \quad\mbox{ in }\Omega
\]
in the weak sense. So, we have that $u=T_0 w$ in $\R^N$, and
hence $u=Tw$ in $\Omega$. Thus,
\[
(Id-T)w=w-u=f  \quad\mbox{ in }\Omega,
\]
and so $f\in Im(Id-T)$. Finally, from~\eqref{immagineI-T} we 
obtain that $\int_\Omega f\,dx=0$.

This conludes the proof in the case $g\equiv 0$ and $h\equiv 0$.

Now, we deal with the nonhomogeneous case~\eqref{prob1}.
By assumption, there exists a function~$\psi \in C^2(\R^N)$ 
such that~$\partial_\nu \psi =h$ on~$\partial \Omega$ and
\[
\int_{(0, 1)}\Ns \psi \,d\mu(s)=g\quad \mbox{ in } \R^N\setminus\overline{\Omega}.
\]
Thus, letting $\overline{u}=u-\psi$, we get that $\overline{u}$
is a weak solution of
\[
\begin{cases}
L_{\alpha,\mu}(\overline{u})=\overline{f}  \quad \mbox{in } \Omega
\\
\mbox{with homogeneous } (\alpha,\mu)\mbox{-Neumann conditions},
\end{cases}
\]
where~$\overline{f} :=f-L_{\alpha,\mu}(\psi)$.

Then, from the proof in the homogeneous case, this problem admits
a solution if and only if~$\int_\Omega \overline{f}\,dx=0$, 
that is 
\begin{equation}\label{eqnonomogeneo}
0=\int_\Omega \overline{f}\,dx
=\int_\Omega f\,dx -\int_\Omega  L_{\alpha,\mu}(\psi)\,dx.
\end{equation}
{F}rom Lemma~\ref{lemmathdivergenza} and the divergence theorem,
we have
\[
\begin{aligned}
\int_\Omega  L_{\alpha,\mu}(\psi)\,dx
&=-\alpha\int_\Omega \Delta \psi\,dx
+\int_{(0, 1)}\int_\Omega (-\Delta)^s \psi\,dx\,d\mu(s) \\
&=\alpha\int_{\partial\Omega} \partial_\nu  \psi\,d\mathscr{H}_x^{N-1}
+\int_{(0, 1)}\int_{\R^N\setminus \Omega }
\Ns \psi \,dx \,d\mu(s) \\
&=\alpha\int_{\partial\Omega} h\,d\mathscr{H}_x^{N-1}
+\int_{\R^N\setminus \Omega } g\,dx.
\end{aligned}
\]
{F}rom this and~\eqref{eqnonomogeneo} we conclude that a solution of~\eqref{prob1}
exists if and only if~\eqref{condizioneesistenzaunicità} holds.

Finally, thanks to Lemma~\ref{lemmasoluzionecostante} the solution 
is unique up to an additive constant, which completes the proof of the desired result.
\end{proof}

\section{Proof of \eqref{Hmuhilbert}}\label{appendixHilbert}
In this appendix we show that \eqref{Hmuhilbert} holds, namely $(\mathcal{H}_\mu(\Omega), \|\cdot\|_{\mu})$ is a Hilbert space.
The argument is rather standard, but it is included here for the reader's facility.

\begin{proof}[Proof of \eqref{Hmuhilbert}]
It is easy to check that~\eqref{prodottoscalaremu} is a bilinear form and~$\|u\|_\mu = (u, u)^{1/2}_\mu$. Moreover, if~$\|u\|_\mu = 0$, we get that~$\|u\|_{L^2(\Omega)}=0$. This gives~$u=0$ a.e. in~$\Omega$ and
\[
\int_{(0,1)} \left(c_{N,s}\iint_\Q \frac{|u(x)-u(y)|^2}{|x-y|^{N+2s}}\,dx\,dy\right)\,d\mu(s) =0,
\]
which, in turn, entails that
\[
\iint_\Q \frac{|u(x)-u(y)|^2}{|x-y|^{N+2s}}\,dx\,dy =0 \quad\mbox{ for any } s\in\mbox{supp}(\mu).
\]
Hence $|u(x)-u(y)|=0$ for any $(x, y)\in\Q$.  {F}rom this, we infer that for a.e.~$x\in \R^n\setminus\Omega$ and~$y\in\Omega$,
\[
u(x) = u(x) -u(y)=0,
\]
which means that $u=0$ for a.e. $x\in\R^N$. 

We now prove that $\mathcal{H}_\mu(\Omega)$ is complete. To this end,  let $u_k$ denote a Cauchy sequence with respect to the norm~\eqref{normamu}. Hence, $u_k$ is a Cauchy sequence in $L^2(\Omega)$ and converges, up to subsequences, to some $u\in L^2(\Omega)$ and also a.e. in $\Omega$. More precisely, this means that there exists a subset $Z_1\subset\R^N$ with\footnote{In~\eqref{Z1} we have used the standard notation~$|\cdot|$ to denote the Lebesgue measure of a set.}
\begin{equation}\label{Z1}
|Z_1|=0 \quad\mbox{ and }\quad u_k(x)\to u(x)\quad \mbox{ for all } x\in\Omega\setminus Z_1.
\end{equation}
Moreover, given any $U:\R^N\to\R$, for any $(x, y)\in\R^{2N}$ and $s\in (0, 1)$, we define the new function
\begin{equation}\label{Edefn}
E_U(x, y, s): = \frac{\big(U(x)-U(y)\big)\chi_{\Q}(x, y)}{|x-y|^{(N+2s)/2}}.
\end{equation}
Now, since
\[
E_{u_k}(x, y, s) - E_{u_h}(x, y, s) = \frac{\big(u_k(x) -u_k(y) -u_h(x) + u_h(y)\big)\chi_{\Q}(x, y)}{|x-y|^{(N+2s)/2}}
\]
and $u_k$ is a Cauchy sequence, for any $\varepsilon>0$ there exists~$N_\varepsilon\in\N$ such that, if~$h$, $k\ge N_\varepsilon$, 
\[
\varepsilon^2 \ge \int_{(0, 1)} c_{N, s} \iint_{\Q}\frac{|(u_k -u_h)(x) -(u_k -u_h)(y)|^2}{|x-y|^{N+2s}} \, dx \,dy\, d\mu (s) =: \|E_{u_k} - E_{u_h}\|^2_{L^2(\R^{2N}\times (0, 1))}.
\]
Thus, we have that $E_{u_k}$ is a Cauchy sequence in $L^2 \big(\R^{2N}\times (0, 1), dx\,dy\, d\mu\big)$. {F}rom this we infer that  $E_{u_k}$ converges, up to subsequences, to some $E$ in $L^2\big(\R^{2N}\times (0, 1), dx\,dy\, d\mu\big)$ and $E_{u_k}(x, y, s)$ converges to $E(x, y, s)$ a.e. in $\R^{2N}\times (0, 1)$.
Namely, there exist $Z_2\subset \R^{2N}$ and $\Sigma\subset (0, 1)$ such that
\begin{equation}\label{Z2}
\begin{split}
&|Z_2|=0, \quad \mu(\Sigma)=0 \\
&\mbox{and }\quad E_{u_k}(x, y, s) \to E(x, y, s) \mbox{ for all } (x, y)\in\R^{2N}\setminus Z_2, \ s\in (0, 1)\setminus\Sigma.
\end{split}
\end{equation}
Let fix $s\in (0, 1)\setminus\Sigma$. For any $x\in\Omega$, we set
\begin{align*}
S_x&:=\{y\in\R^N : (x, y)\in\R^{2N}\setminus Z_2\},\\
W&:= \{(x, y)\in\R^{2N} : x\in\Omega \mbox{ and } y\in\R^N\setminus S_x\}\\
{\mbox{and }}\quad V&:=\{x\in\Omega : |\R^N\setminus S_x|=0\}.
\end{align*}
We claim that
\begin{equation}\label{inZ2}
W\subseteq Z_2.
\end{equation}
Indeed, if $(x, y)\in W$, then $y\in\R^N\setminus S_x$, namely $(x, y)\in \R^{2N}\setminus Z_2$ and hence $(x, y)\in Z_2$, as desired. 

Accordingly, by~\eqref{Z2} and~\eqref{inZ2} we find that $|W|=0$. 

Hence, by Fubini's theorem it follows that
\[
0=|W|=\int_\Omega \left|\R^N\setminus S_x\right| dx,
\]
and thus $|\R^N\setminus S_x|=0$ for a.e. $x\in\Omega$. Also, we have $|\Omega\setminus V|=0$ which, together with~\eqref{Z1}, gives
\[
|\Omega\setminus (V\setminus Z_1)| = |(\Omega\setminus V)\cup Z_1| \le |\Omega\setminus V| +|Z_1| =0.
\]
In particular, we infer that $V\setminus Z_1$ is non empty. 

Let us fix $x_0\in V\setminus Z_1$. Now, since $x_0\in\Omega\setminus Z_1$, we have
\[
\lim_{k\to +\infty} u_k(x_0) = u(x_0)
\]
by~\eqref{Z1}. Moreover, $|\R^N\setminus S_{x_0}|=0$ since $x_0\in V$, namely for any $y\in S_{x_0}$ (i.e. for a.e. $y\in\R^N$), it follows that $(x_0, y)\in \R^{2N}\setminus Z_2$. Hence,  by~\eqref{Edefn} and~\eqref{Z2},
\begin{equation}\label{nons}
\lim_{k\to +\infty} E_{u_k}(x_0, y, s) = |x_0 -y|^{-(N+2s)/2} \lim_{k\to +\infty} \big( u_k(x_0) -u_k(y)\big) \chi_{\Q} (x_0, y) =  E(x_0, y, s).
\end{equation}
In addition, since $\Omega\times(\R^N\setminus\Omega)\subseteq \Q$, by the definition in~\eqref{Edefn},
\[
E_{u_k}(x_0, y, s):=\frac{u_k(x_0) -u_k(y)}{|x_0-y|^{(N+2s)/2}}\quad\mbox{ for a.e. } y\in\R^N\setminus \Omega.
\]
Hence, we have
\begin{align*}
\lim_{k\to +\infty} u_k(y) &= \lim_{k\to +\infty}\left(u_k(x_0) -|x_0-y|^{(N+2s)/2} E_{u_k}(x_0, y, s) \right)\\
&= u(x_0) -|x_0-y|^{(N+2s)/2} E(x_0, y, s)
\end{align*}
for a.e. $y\in\R^N\setminus\Omega$. 

Also, we can argue as in~\eqref{nons} to show that the latter limit does not depend on~$s$ as well. {F}rom this and~\eqref{Z1} we infer that~$u_k$ converges a.e. in $\R^N$. Up to a change of notation, we have that $u_k$ converges a.e. in $\R^N$ to some $u$. 

Now, recalling that $u_k$ is a Cauchy sequence in~$\mathcal{H}_\mu (\Omega)$,  fixed any $\varepsilon >0$, there exists~$N_\varepsilon\in\N$ such that, for any~$k\ge N_\varepsilon$, 
\begin{align*}
\varepsilon^2 &\ge \liminf_{h\to +\infty} \|u_k -u_h\|^2_{\mu}\\
&\ge \liminf_{h\to +\infty} \int_\Omega (u_k -u_h)^2 dx + \liminf_{h\to +\infty}\int_{\R^N\setminus\Omega} |g|(u_k -u_h)^2 dx\\
&\qquad +\frac12\liminf_{h\to +\infty} \int_{(0, 1)} c_{N, s} \int_{\Q}\frac{|(u_k -u_h)(x) - (u_k -u_h)(y)|^2}{|x-y|^{N+2s}} dx\,dy\, d\mu(s).
\end{align*}
Hence,  since $u_k$ converges to $u$ a.e. in $\R^N$,  by Fatou's Lemma we get
\begin{align*}
\varepsilon^2&\ge \int_\Omega (u_k -u)^2 dx + \int_{\R^N\setminus\Omega} |g|(u_k -u)^2 dx\\
&\qquad +\frac12 \int_{(0, 1)} c_{N, s} \int_{\Q}\frac{|(u_k -u)(x) - (u_k -u)(y)|^2}{|x-y|^{N+2s}} dx\,dy\, d\mu(s)\\
&= \|u_k -u\|^2_\mu,
\end{align*}
namely,  $u_k$ converges to $u$ in $\mathcal{H}_\mu (\Omega)$, i.e. $\mathcal{H}_\mu (\Omega)$ is complete.
\end{proof}

\section{Proof of Proposition \ref{propfunctional}}\label{appendixfunctional}
In this appendix we prove that Proposition \ref{propfunctional} holds. The argument presented is rather standard, but it is included here for the reader's facility.

\begin{proof}[Proof of Proposition \ref{propfunctional}]
Let $u\in \mathcal{H}_{\alpha, \mu}(\Omega)$. By the H\"older inequality and recalling definition~\eqref{normaalfamu},
\[
\left| \int_\Omega fu \, dx \right|\le \|f\|_{L^2(\Omega)}\|u\|_{L^2(\Omega)}\le c_1 \|u\|_{\alpha, \mu}
\]
and
\[
\left|\; \int_{\R^N\setminus \Omega} gu \, dx \right|\le \int_{\R^N\setminus\Omega} |g|^{1/2} |g|^{1/2} |u|\le \|g\|^{1/2}_{L^1(\R^N\setminus\Omega)} \| |g|^{1/2} u\|_{L^2(\R^N\setminus\Omega)}\le c_2 \|u\|_{\alpha, \mu},
\]
for some positive constants $c_1, c_2$. 

Similarly, there exists $c_3>0$ such that
\[
\left| \,\int_{\partial\Omega} hu \, d\mathscr{H}_x^{N-1} \right|\le \|h\|^{1/2}_{L^1(\partial\Omega)} \, \| |h|^{1/2} u\|_{L^2(\partial\Omega)} \le c_3 \|u\|_{\alpha, \mu}.
\]
{F}rom this, we infer that if $u\in \mathcal{H}_{\alpha, \mu}(\Omega)$,
\[
| I(u)|\le c_4\|u\|_{\alpha, \mu} <+\infty.
\]

We now evaluate the first variation of the functional $I$. To this end, let~$\varepsilon\in (0, 1)$ and take~$v\in \mathcal{H}_{\alpha, \mu}(\Omega)$. Hence,
\begin{align*}
I(u+\varepsilon v) &= \frac{\alpha}{2}\int_\Omega |\nabla (u+\varepsilon v)|^2 dx + \int_{(0, 1)} \frac{c_{N, s}}{4} \iint_{\Q} \frac{|(u+\varepsilon v)(x)-(u+\varepsilon v)(y)|^2}{|x-y|^{N+2s}} dx\,dy\, d\mu(s)\\
&\quad - \int_\Omega f \, (u+\varepsilon v) \, dx -\int_{\R^N\setminus\Omega} g \, (u+\varepsilon v) \, dx -\int_{\partial\Omega} h \, (u+\varepsilon v) \, d\mathscr{H}_x^{N-1}\\
&= I(u) +\varepsilon\Bigg(\alpha\int_\Omega \nabla u\cdot\nabla v \, dx +\int_{(0, 1)}\frac{c_{N, s}}{2} \iint_{\Q} \frac{(u(x)-u(y)) (v(x)-v(y))}{|x-y|^{N+2s}} dx\,dy\, d\mu(s)\\
&\qquad\qquad -\int_{\Omega} f\, v \, dx -\int_{\R^N\setminus\Omega} g \, v \, dx -\int_{\partial\Omega} h \, v \, d\mathscr{H}_x^{N-1} \Bigg)\\
&\qquad\qquad+\varepsilon^2 \left(\frac{\alpha}{2}\int_{\Omega} |\nabla v|^2 dx + \int_{(0, 1)}\frac{c_{N, s}}{4} \iint_{\Q}\frac{|v(x)-v(y)|^2}{|x-y|^{N+2s}} dx\,dy\, d\mu(s)\right).
\end{align*}
{F}rom this, we infer that
\begin{align*}
\lim_{\varepsilon\to 0} \frac{I(u+\varepsilon v) - I(u)}{\varepsilon} &= \alpha\int_\Omega \nabla u\cdot\nabla v \, dx +\int_{(0, 1)}\frac{c_{N, s}}{2} \iint_{\Q} \frac{(u(x)-u(y)) (v(x)-v(y))}{|x-y|^{N+2s}} dx\,dy\, d\mu(s)\\
& \qquad-\int_{\Omega} fv \, dx -\int_{\R^N\setminus\Omega} gv \, dx -\int_{\partial\Omega} hv \, d\mathscr{H}_x^{N-1},
\end{align*}
namely
\begin{align*}
(I'(u), v) & = \alpha\int_\Omega \nabla u\cdot\nabla v \, dx +\int_{(0, 1)}\frac{c_{N, s}}{2} \iint_{\Q} \frac{(u(x)-u(y)) (v(x)-v(y))}{|x-y|^{N+2s}} dx\,dy\, d\mu(s)\\
&\qquad -\int_{\Omega} fv \, dx -\int_{\R^N\setminus\Omega} gv \, dx -\int_{\partial\Omega} hv \, d\mathscr{H}_x^{N-1}.
\end{align*}
Hence, if $u$ is a critical point of $I$, we have that $u$ is a weak solution of~\eqref{prob1}, according to the definition stated in~\eqref{weaksolution}. This completes the proof.
\end{proof}

\section{Proof of Proposition \ref{ASUYY}}\label{appendixASUYY}
Here we prove Proposition \ref{ASUYY}.

\begin{proof}[Proof of Proposition \ref{ASUYY}]
Let 
\[
m:=\frac{1}{|\Omega|}\int_\Omega u_0(x)\,dx
\]
be the total mass of $u$, which does not depend on $t$ thanks to 
Proposition~\ref{proposizionemassacostante}.
Defining 
\[
A(t):=\int_\Omega |u(x,t)-m|^2\,dx,
\]
from Proposition~\ref{proposizionemassacostante} we have
\[
A(t)=\int_\Omega (u^2(x,t)+2mu(x,t)+m^2)\,dx=
\int_\Omega u^2(x)\,dx -|\Omega|m^2.
\]
In light of~\eqref{ipotesimassa1}, \eqref{ipotesimassa2},
\eqref{ipotesimassa3} and~\eqref{ipotesimassa4}, we can use 
Lemma~\ref{lemmaperparti}, the classical integration by parts formula and the homogeneous Neumann conditions to obtain
\[
\begin{aligned}
A'(t)&=2\int_\Omega \partial_t u(x,t)u(x,t)\,dx
=2 \alpha\int_\Omega u(x,t) \Delta u(x,t)\,dx
-2 \int_\Omega u(x,t) (-\Delta)^s u(x,t)\,dx \\
&=-2 \alpha\int_\Omega |\nabla u(x,t)|^2 \,dx
-\int_{(0,1)}c_{N,s}
\iint_\Q \frac{|u(x,t)-u(y,t)|^2}{|x-y|^{N+2s}}\,dx\,dy\,d\mu(s)\\
&<0,
\end{aligned}
\]
which gives that $A$ is decreasing.

Moreover, if $\mu\not \equiv 0$, we can use the Poincar\'e
inequality in Lemma~\ref{POINCARE} and Proposition~\ref{proposizionemassacostante} to get
\begin{equation}\label{equazioneA'<A}\begin{split}&
A'(t) \leq -\int_{(0,1)}c_{N,s}
\iint_\Q \frac{|u(x,t)-u(y,t)|^2}{|x-y|^{N+2s}}\,dx\,dy\,d\mu(s)\\&\qquad\qquad
\leq -c\int_\Omega |u(x,t)-m|^2\,dx=-cA(t),\end{split}
\end{equation}
for some $c>0$. We notice that, in the case $\mu\equiv 0$,
one can prove a Poincar\'e inequality as in Lemma~\ref{POINCARE} 
and still obtain the inequality in~\eqref{equazioneA'<A}.

Thus, from~\eqref{equazioneA'<A} it follows that~$
A(t)\leq e^{-ct}A(0)$,
and so
\[
\lim_{t\to +\infty} \int_\Omega |u(x,t)-m|^2\,dx=0,
\]
that is, $u$ converges to $m$ in $L^2(\Omega)$ as $t\to +\infty$.
\end{proof}

\section{An alternative proof of~\eqref{LIMI2}}\label{KDMCLDFMVV4}
Here we provide an alternative proof of~\eqref{LIMI2} in the proof of Theorem~\ref{propcontinuità}.

\begin{lemma}
Under the same assumptions of Theorem~\ref{propcontinuità},  recalling the definition in~\eqref{I2}, we have
\[
\lim_{k\to+\infty} |I_2| =0.
\]
\end{lemma}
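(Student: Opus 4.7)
The plan is to reinterpret the quantity bounding~$|I_2|$ in~\eqref{I2minoreuguale} as the average of a pointwise-vanishing function against a probability measure that concentrates on the upper part of~$\mbox{supp}(\mu)$ as $h_k\to 0$, thereby bypassing the dyadic exponent~$(7/8)^m$ introduced in the main proof. Setting
\[N_k(s):=\int_{\Omega_k\setminus B_{1/\sqrt{h_k}}}\frac{d\eta}{|e_N-\eta|^{N+2s}},\qquad D_k(s):=\int_{\Omega_k}\frac{d\eta}{|e_N-\eta|^{N+2s}},\]
and introducing the probability measure $d\nu_k(s):=Z_k^{-1}c_{N,s}h_k^{-2s}D_k(s)\,d\mu(s)$, with $Z_k$ the natural normalization, the estimate~\eqref{I2minoreuguale} can be rewritten as
\[|I_2|\le 2\|u\|_{L^\infty(\overline{\Omega})}\int_{(0,1)}\frac{N_k(s)}{D_k(s)}\,d\nu_k(s).\]

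The task is then to establish three uniform ingredients. First, a tail bound $N_k(s)\le C_1 h_k^{s}/s$, uniform in $s\in(0,1)$ for $k$ large, obtained from the triangular inequality $|e_N-\eta|\ge|\eta|/2$ for $|\eta|\ge 2$ together with a radial computation. Second, a uniform lower bound $D_k(s)\ge C_*>0$, provided by the $C^1$-regularity of~$\partial\Omega$ at~$p$ and the convergence $\Omega_k\to\Pi$, which together produce a fixed compact set $K^*\subset\Pi$ sitting inside $\Omega_k$ for all $k$ large. Third, a uniform upper bound $c_{N,s}D_k(s)\le M$ coming from the positive distance between $e_N$ and $\Omega_k$ and the endpoint asymptotics $c_{N,s}\sim s(1-s)$ of the normalizing constant. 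Once these are in place, pick $s^\star\in\mbox{supp}(\mu)$ and $\eta>0$ small with $(s^\star-\eta,s^\star+\eta)\subset(0,1)$ and $\mu((s^\star-\eta,s^\star+\eta))>0$; comparing the third ingredient with the restriction of~$Z_k$ to this window yields $Z_k\ge c\, h_k^{-2(s^\star-\eta)}$, so that, for every $\alpha\in(0,s^\star-\eta)$,
\[\nu_k((0,\alpha))\le\frac{M\, h_k^{-2\alpha}\,\mu((0,\alpha))}{c\, h_k^{-2(s^\star-\eta)}}=C_2\, h_k^{2(s^\star-\eta-\alpha)}\to 0.\]

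On the complement $[\alpha,1)$, the first two ingredients furnish $\sup_{s\in[\alpha,1)}N_k(s)/D_k(s)\le (C_1/(\alpha C_*))\, h_k^{\alpha}\to 0$. Splitting $\int_{(0,1)}=\int_{(0,\alpha)}+\int_{[\alpha,1)}$ and using the trivial bound $N_k(s)/D_k(s)\le 1$ on the first piece, both contributions vanish as $k\to\infty$, whence $\lim_k|I_2|=0$. The delicate step I foresee is the uniform lower bound on $D_k(s)$: the radial integral has natural scale $1/s$ as $s\searrow 0$, so uniformity across $s\in(0,1)$ must be secured by a compactly contained piece of $\Omega_k$ kept away from the endpoint behaviour, which is where the $C^1$ exterior ball property at~$p$ plays the crucial role.
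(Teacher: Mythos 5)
Your argument is correct, and it is genuinely different from the alternative proof the paper gives in Appendix~\ref{KDMCLDFMVV4}. Two remarks on the comparison. First, your starting point is not literally~\eqref{I2minoreuguale} but the sharper display that precedes it in the proof of Theorem~\ref{propcontinuità}, where the numerator is still restricted to $\Omega_k\setminus B_{1/\sqrt{h_k}}$; this is available and is in fact the better choice, since you retain the full gain $N_k(s)\le C_1 h_k^{s}/s$ from the excised ball, whereas the paper deliberately trades part of it for the factor $h_k^{-\frac74 s}$ via $|e_N-\eta|^{N+2s}\ge |e_N-\eta|^{N+s}h_k^{-s/4}$. Second, the two proofs then diverge: the paper's appendix builds an explicit truncated cone $\mathcal C\subset\Omega_k$ to get a quantitative lower bound $\beta_{k,s}\gtrsim c_{N,s}\,h_k^{-2s}\,(2^{-2s}-2^{2s}h_k^{s})/s$ and splits the $s$-integral at a fixed $\delta$ with $\mu([\delta,1))>0$, concluding by a ``numerator vanishes, denominator blows up'' argument after renormalizing by $h_k^{\frac74\delta}$ (the main-text proof instead uses the dyadic exponent $(7/8)^m$). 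You replace all of this by viewing the ratio as $\int N_k/D_k\,d\nu_k$ for the probability measures $\nu_k\propto c_{N,s}h_k^{-2s}D_k(s)\,d\mu(s)$, showing that $\nu_k$ concentrates on $\{s\ge\alpha\}$ (your $Z_k\ge c\,h_k^{-2(s^\star-\eta)}$ bound, which only needs $\mu$ to charge a neighbourhood of some $s^\star$ in its support, automatic by definition of support) while $\sup_{s\ge\alpha}N_k/D_k\le C h_k^{\alpha}/(\alpha C_*)\to0$. Your three ingredients all check out: the tail bound is a direct radial computation; the uniform lower bound $D_k(s)\ge C_*$ follows from a fixed ball $B\subset\Pi$ with $1\le\mathrm{dist}(e_N,B)\le\max_{B}|e_N-\eta|$ contained in $\Omega_k$ for large $k$ (the $1/s$ blow-up you worried about affects only the upper bound, and is exactly compensated there by $c_{N,s}\sim s(1-s)$, as in the paper's own remark that $c_{N,s}2^{2s-1}\omega_{N-1}/s$ is uniformly bounded). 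The net effect is a softer, more conceptual proof that avoids both the cone construction and the artificial exponents $7/8$ and $7s/4$; the paper's version is more explicit in its constants but otherwise proves nothing more. One cosmetic point: you use $\eta$ both for the integration variable in $N_k,D_k$ and for the half-width of the window around $s^\star$; rename one of them.
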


\begin{proof}
The same arguments as in Theorem~\ref{propcontinuità} allow us to state inequality~\eqref{I2minoreuguale}, namely
\[
|I_2|\leq 2\|u\|_{L^\infty(\overline{\Omega})}
\dfrac{\displaystyle\int_{(0,1)}c_{N,s}\int_{\Omega_k}\dfrac{h_k^{-\frac{7}{4}s}}{|e_N-\eta|^{N+2s}}\,d\eta\,d\mu(s)}{\displaystyle\int_{(0,1)}c_{N,s}\int_{\Omega_k}\dfrac{h_k^{-2s}}{|e_N-\eta|^{N+2s}}\,d\eta\,d\mu(s)}.
\]
Let us define
\begin{align}\label{alphaks}
\alpha_{k, s}&:= h_k^{-\frac74 s} c_{N, s} \int_{\Omega_k}\dfrac{d\eta}{|e_N-\eta|^{N+s}}
\\ {\mbox{and }}\quad
\label{betalphaks}
\beta_{k, s}&:= h_k^{-2s} c_{N, s} \int_{\Omega_k}\dfrac{d\eta}{|e_N-\eta|^{N+2s}}.
\end{align}

We claim that 
\begin{equation}\label{claimappendix}
\lim_{k\to +\infty} \dfrac{\displaystyle\int_{(0,1)} \alpha_{k, s} \, d\mu(s)}{\displaystyle\int_{(0,1)}\beta_{k, s} \, d\mu(s)} = 0.
\end{equation}
First, we notice that for $k$ large enough, it holds that~$
\Omega_k \subset \R^N\setminus B_{1/2}(e_N)$.
{F}rom this, we have
\begin{equation}\label{alphaksminore}
\begin{split}&
\alpha_{k, s}\le h_k^{-\frac74 s} c_{N, s} \int_{\R^N\setminus B_{1/2}(e_N)}\dfrac{d\eta}{|e_N-\eta|^{N+s}} = h_k^{-\frac74 s} c_{N, s} \int_{\R^N\setminus B_{1/2}} \dfrac{dz}{|z|^{N+s}}\\
&\qquad\qquad= h_k^{-\frac74 s} c_{N, s} \omega_{N-1} \int_{1/2}^{+\infty} \rho^{-1-s} d\rho= \frac{2^s}{s} \omega_{N-1}\, c_{N, s}\, h_k^{-\frac74 s}.
\end{split}
\end{equation}
We define the set
\[
\mathcal C:=\left\{x\in\R^N: x_N\le 1-\frac{9}{10}\sqrt{x_1^2 +\dots+x^2_{N-1}} \,, \ 2\le |x-1|\le \frac{1}{2\sqrt{h_k}} \right\},
\]
as shown in Figure~\ref{FIG2}. Since $\Omega_k$ converges to the halfplane~$\Pi$, we get
\[
\mathcal C \subset \left(\Omega_k\cap B_{1/2\sqrt{h_k}}\right) \subset\Omega_k\quad\mbox{ for sufficiently large } k.
\]

\begin{figure}[h]
\begin{center}
\includegraphics[scale=.42]{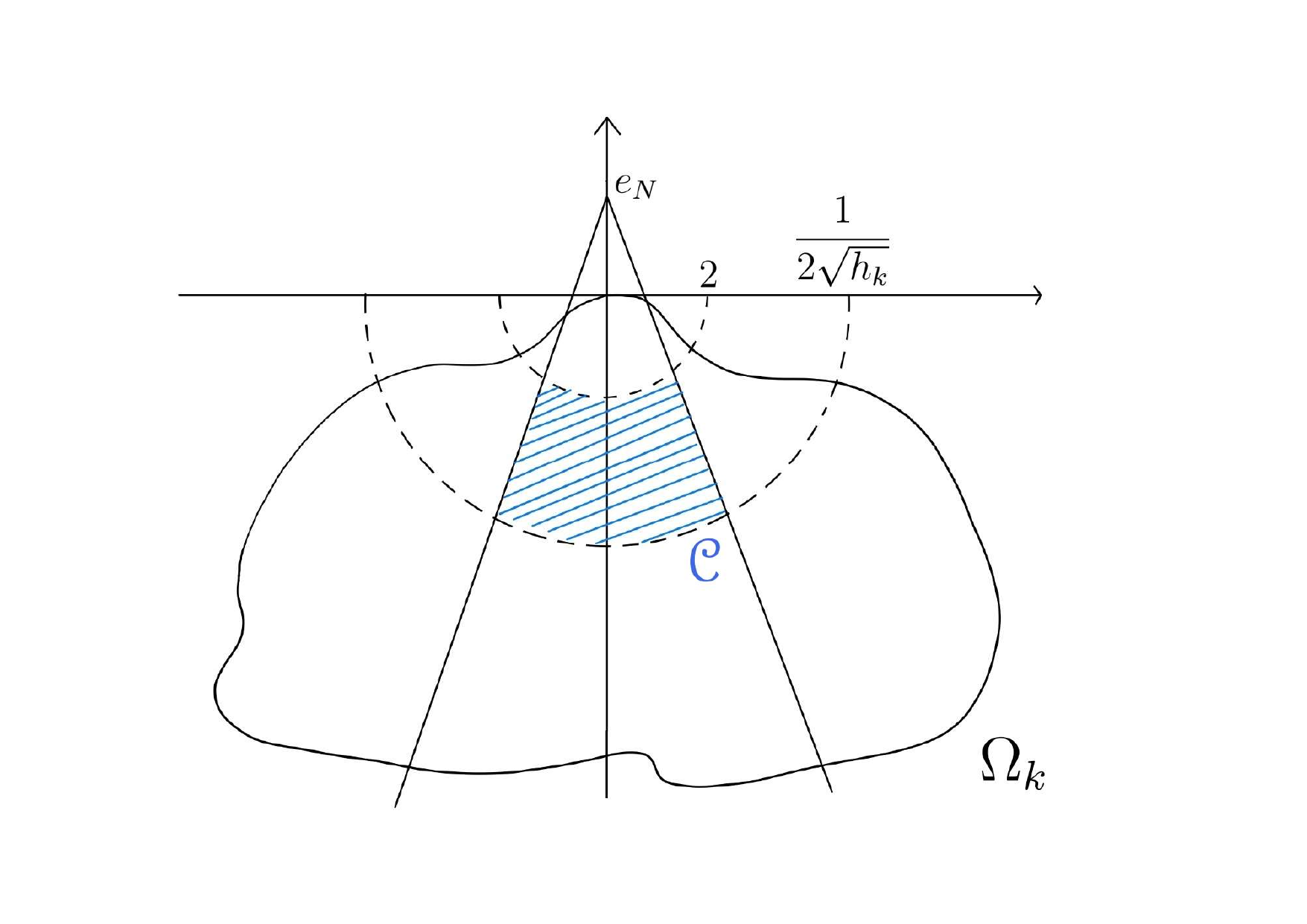}
\end{center}
\caption{The set $\mathcal C$ (in blue).}
\label{FIG2}
\end{figure}

Moreover, we can write
\[
\mathcal C=\left\{(\rho, \theta_1,\dots, \theta_{N-1}) \in\R\times [0, 2\pi]^{N-2}\times [0, \pi] : \ \rho\in \left[2, \frac{1}{2\sqrt{h_k}}\right],  \ \theta=(\theta_1,\dots, \theta_{N-1})\in\Sigma \right\},
\]
with $\Sigma\subseteq [0, 2\pi]^{N-2}\times [0, \pi]$. 

{F}rom this and recalling the definition in~\eqref{betalphaks}, we have
\begin{equation}\label{betaksmaggiore}
\begin{split}
\beta_{k, s}&\ge h_k^{-2s} c_{N, s} \int_{\mathcal C} \dfrac{d\eta}{|e_N-\eta|^{N+2s}} = h_k^{-2s} c_{N, s} \int_2^{\frac{1}{2\sqrt{h_k}}} \left(\int_{\Sigma} \dfrac{\rho^{N-1}}{\rho^{N+2s}} d\theta\right) d\rho\\
&\quad=h_k^{-2s} c_{N, s} \left(\int_{\Sigma} d\theta\right) \int_2^{\frac{1}{2\sqrt{h_k}}} \rho^{-1-2s} d\rho = h_k^{-2s} c_{N, s} \, \dfrac{\omega_{N-1}^{\mathcal C}}{2s} [2^{-2s} -2^{2s} h_k^s],
\end{split}
\end{equation}
where we introduced the notation
\[
\omega_{N-1}^{\mathcal C}:= \int_{\Sigma} d\theta.
\]
Now, by~\eqref{alphaksminore} and~\eqref{betaksmaggiore} we infer that
\[
\dfrac{\alpha_{k, s}}{\beta_{k, s}} \le \dfrac{\frac{2^s}{s} \omega_{N-1} \, c_{N, s} \, h_k^{-\frac74 s}}{h_k^{-2s} \, c_{N, s} \, \dfrac{\omega_{N-1}^{\mathcal C}}{2s} \, [2^{-2s} -2^{2s} h_k^s]}= \dfrac{2^{s+1} \, \omega_{N-1} \, h_k^{\frac{s}{4}}}{\omega_{N-1}^{\mathcal C}-h_k^s \, \omega_{N-1}^{\mathcal C} 2^{2s}}.
\]
Let us take $\delta>0$ such that~$
\mu\big([\delta, 1)\big)>0$.
We notice that
\[
\dfrac{\alpha_{k, s}}{\beta_{k, s}} \le C h_k^{\frac{\delta}{4}} \qquad\mbox{ for } s\in (\delta, 1),
\]
for a positive constant $C>0$ which does not depend on $s$.
Hence, we have 
\[
\dfrac{\displaystyle\int_{(\delta,1)} \alpha_{k, s} \, d\mu(s)}{\displaystyle\int_{(0,1)}\beta_{k, s} \, d\mu(s)}\le C h_k^{\frac{\delta}{4}} \ \dfrac{\displaystyle\int_{(\delta,1)} \beta_{k, s} \, d\mu(s)}{\displaystyle\int_{(0,1)}\beta_{k, s} \, d\mu(s)}\le C h_k^{\frac{\delta}{4}}.
\]
In light of the previous inequality and recalling that $h_k\to 0$ a $k\to +\infty$, we get
\begin{equation}\label{quasi}
\lim_{k\to +\infty} \dfrac{\displaystyle\int_{(\delta,1)} \alpha_{k, s} \, d\mu(s)}{\displaystyle\int_{(0,1)}\beta_{k, s} \, d\mu(s)} = 0.
\end{equation}
Besides,  multiplying and dividing by $h_k^{\frac74 \delta}$, we have
\[
\dfrac{\displaystyle\int_{(0, \delta)} \alpha_{k, s} \, d\mu(s)}{\displaystyle\int_{(0,1)}\beta_{k, s} \, d\mu(s)}\le \dfrac{\omega_{N-1}\displaystyle\int_{(0, \delta)} \frac{2^s}{s} \, c_{N, s}\, h_k^{\frac74 (\delta - s)} \, d\mu(s)}{\displaystyle\int_{(0,\delta)}\beta_{k, s} \, d\mu(s) +\omega_{N-1}^{\mathcal C}\displaystyle\int_{(\delta, 1)} \, \dfrac{c_{N, s}}{2s} \left[2^{-2s} h_k^{-2s+\frac74 \delta} -2^{2s} h_k^{-s +\frac74 \delta}\right] \, d\mu(s)}.
\]
We notice that, by the dominated convergence theorem, it follows that
\[
\lim_{k\to +\infty} \displaystyle\int_{(0, \delta)} \frac{2^s}{s} \, c_{N, s}\, h_k^{\frac74 (\delta - s)} \, d\mu(s) =0.
\]
Moreover,  since $-2s +\frac74 \delta<0$ for $s\in (\delta, 1)$, we get
\[
\lim_{k\to +\infty} \int_{(\delta, 1)} \, \dfrac{c_{N, s}}{2s} \left[2^{-2s} h_k^{-2s+\frac74 \delta} -2^{2s} h_k^{-s +\frac74 \delta}\right] \, d\mu(s) =+\infty.
\]
Therefore, we infer that
\begin{equation}\label{quasi2}
\lim_{k\to +\infty} \dfrac{\displaystyle\int_{(0, \delta)} \alpha_{k, s} \, d\mu(s)}{\displaystyle\int_{(0,1)}\beta_{k, s} \, d\mu(s)} =0.
\end{equation}
Finally, by~\eqref{quasi} and~\eqref{quasi2} we have that~\eqref{claimappendix} holds. This concludes the proof.
\end{proof}

\end{appendix}

\section*{Acknowledgements} 
All the authors are members of the Australian Mathematical Society (AustMS). CS and EPL are members of the INdAM--GNAMPA.

This work has been supported by the 
Australian Future Fellowship FT230100333 and by the
Australian Laureate Fellowship FL190100081.

{\small \def\cdprime{$''$}
\begin{bibdiv}
\begin{biblist}

\bib{2024arXiv241118238A}{article}{
       author = {Abatangelo, Nicola},
       author={Dipierro, Serena},
       author={Valdinoci, Enrico},
        title = {A gentle invitation to the fractional world},
      journal = {arXiv e-prints},
         date = {2024},
          doi = {10.48550/arXiv.2411.18238},
archivePrefix = {arXiv},
       eprint = {2411.18238},
}

\bib{MR3967804}{article}{
   author={Abatangelo, Nicola},
   author={Valdinoci, Enrico},
   title={Getting acquainted with the fractional Laplacian},
   conference={
      title={Contemporary research in elliptic PDEs and related topics},
   },
   book={
      series={Springer INdAM Ser.},
      volume={33},
      publisher={Springer, Cham},
   },
   date={2019},
   pages={1--105},
   review={\MR{3967804}},
}

\bib{MR2911421}{article}{
   author={Barles, Guy},
   author={Chasseigne, Emmanuel},
   author={Ciomaga, Adina},
   author={Imbert, Cyril},
   title={Lipschitz regularity of solutions for mixed integro-differential
   equations},
   journal={J. Differential Equations},
   volume={252},
   date={2012},
   number={11},
   pages={6012--6060},
   issn={0022-0396},
   review={\MR{2911421}},
   doi={10.1016/j.jde.2012.02.013},
}
	
		


\bib{MR4387204}{article}{
   author={Biagi, Stefano},
   author={Dipierro, Serena},
   author={Valdinoci, Enrico},
   author={Vecchi, Eugenio},
   title={Mixed local and nonlocal elliptic operators: regularity and
   maximum principles},
   journal={Comm. Partial Differential Equations},
   volume={47},
   date={2022},
   number={3},
   pages={585--629},
   issn={0360-5302},
   review={\MR{4387204}},
   doi={10.1080/03605302.2021.1998908},
}

\bib{MR4391102}{article}{
   author={Biagi, Stefano},
   author={Dipierro, Serena},
   author={Valdinoci, Enrico},
   author={Vecchi, Eugenio},
   title={A Hong-Krahn-Szeg\"{o} inequality for mixed local and nonlocal
   operators},
   journal={Math. Eng.},
   volume={5},
   date={2023},
   number={1},
   pages={Paper No. 014, 25},
   review={\MR{4391102}},
   doi={10.3934/mine.2023014},
}

\bib{MR4645045}{article}{
   author={Biagi, Stefano},
   author={Dipierro, Serena},
   author={Valdinoci, Enrico},
   author={Vecchi, Eugenio},
   title={A Faber-Krahn inequality for mixed local and nonlocal operators},
   journal={J. Anal. Math.},
   volume={150},
   date={2023},
   number={2},
   pages={405--448},
   issn={0021-7670},
   review={\MR{4645045}},
   doi={10.1007/s11854-023-0272-5},
}

\bib{MR2653895}{article}{
   author={Biswas, Imran H.},
   author={Jakobsen, Espen R.},
   author={Karlsen, Kenneth H.},
   title={Viscosity solutions for a system of integro-PDEs and connections
   to optimal switching and control of jump-diffusion processes},
   journal={Appl. Math. Optim.},
   volume={62},
   date={2010},
   number={1},
   pages={47--80},
   issn={0095-4616},
   review={\MR{2653895}},
   doi={10.1007/s00245-009-9095-8},
}

\bib{MR2679574}{article}{
   author={Biswas, Imran H.},
   author={Jakobsen, Espen R.},
   author={Karlsen, Kenneth H.},
   title={Difference-quadrature schemes for nonlinear degenerate parabolic
   integro-PDE},
   journal={SIAM J. Numer. Anal.},
   volume={48},
   date={2010},
   number={3},
   pages={1110--1135},
   issn={0036-1429},
   review={\MR{2679574}},
   doi={10.1137/090761501},
}

\bib{BLZ}{article}{
author={Blazevski, D.}, author={del-Castillo-Negrete, D.}, date={2013},
title={Local and nonlocal anisotropic transport in reversed shear magnetic fields: Shearless Cantori and nondiffusive transport}, journal={Phys. Rev. E Stat. Nonlin. Soft Matter Phys.},
number={87}, issue={6},
pages={063106}, DOI={10.1103/PhysRevE.87.063106},}

\bib{MR4381148}{article}{
   author={Cabr\'{e}, Xavier},
   author={Dipierro, Serena},
   author={Valdinoci, Enrico},
   title={The Bernstein technique for integro-differential equations},
   journal={Arch. Ration. Mech. Anal.},
   volume={243},
   date={2022},
   number={3},
   pages={1597--1652},
   issn={0003-9527},
   review={\MR{4381148}},
   doi={10.1007/s00205-021-01749-x},
}

\bib{MR3485125}{article}{
   author={Cabr\'{e}, Xavier},
   author={Serra, Joaquim},
   title={An extension problem for sums of fractional Laplacians and 1-D
   symmetry of phase transitions},
   journal={Nonlinear Anal.},
   volume={137},
   date={2016},
   pages={246--265},
   issn={0362-546X},
   review={\MR{3485125}},
   doi={10.1016/j.na.2015.12.014},
}

\bib{MR2675483}{article}{
   author={Caffarelli, L.},
   author={Roquejoffre, J.-M.},
   author={Savin, O.},
   title={Nonlocal minimal surfaces},
   journal={Comm. Pure Appl. Math.},
   volume={63},
   date={2010},
   number={9},
   pages={1111--1144},
   issn={0010-3640},
   review={\MR{2675483}},
   doi={10.1002/cpa.20331},
}

\bib{MR2928344}{article}{
   author={Chen, Zhen-Qing},
   author={Kim, Panki},
   author={Song, Renming},
   author={Vondra\v{c}ek, Zoran},
   title={Sharp Green function estimates for $\Delta+\Delta^{\alpha/2}$ in
   $C^{1,1}$ open sets and their applications},
   journal={Illinois J. Math.},
   volume={54},
   date={2010},
   number={3},
   pages={981--1024 (2012)},
   issn={0019-2082},
   review={\MR{2928344}},
}

\bib{MR2912450}{article}{
   author={Chen, Zhen-Qing},
   author={Kim, Panki},
   author={Song, Renming},
   author={Vondra\v{c}ek, Zoran},
   title={Boundary Harnack principle for $\Delta+\Delta^{\alpha/2}$},
   journal={Trans. Amer. Math. Soc.},
   volume={364},
   date={2012},
   number={8},
   pages={4169--4205},
   issn={0002-9947},
   review={\MR{2912450}},
   doi={10.1090/S0002-9947-2012-05542-5},
}

		
\bib{MR4693935}{article}{
   author={De Filippis, Cristiana},
   author={Mingione, Giuseppe},
   title={Gradient regularity in mixed local and nonlocal problems},
   journal={Math. Ann.},
   volume={388},
   date={2024},
   number={1},
   pages={261--328},
   issn={0025-5831},
   review={\MR{4693935}},
   doi={10.1007/s00208-022-02512-7},
}

\bib{MR2542727}{article}{
   author={de la Llave, Rafael},
   author={Valdinoci, Enrico},
   title={A generalization of Aubry-Mather theory to partial differential
   equations and pseudo-differential equations},
   journal={Ann. Inst. H. Poincar\'{e} C Anal. Non Lin\'{e}aire},
   volume={26},
   date={2009},
   number={4},
   pages={1309--1344},
   issn={0294-1449},
   review={\MR{2542727}},
   doi={10.1016/j.anihpc.2008.11.002},
}

\bib{MR3724879}{article}{
   author={del Teso, F\'{e}lix},
   author={Endal, J\o rgen},
   author={Jakobsen, Espen R.},
   title={On distributional solutions of local and nonlocal problems of
   porous medium type},
   language={English, with English and French summaries},
   journal={C. R. Math. Acad. Sci. Paris},
   volume={355},
   date={2017},
   number={11},
   pages={1154--1160},
   issn={1631-073X},
   review={\MR{3724879}},
   doi={10.1016/j.crma.2017.10.010},
}

\bib{MR2944369}{article}{
   author={Di Nezza, Eleonora},
   author={Palatucci, Giampiero},
   author={Valdinoci, Enrico},
   title={Hitchhiker's guide to the fractional Sobolev spaces},
   journal={Bull. Sci. Math.},
   volume={136},
   date={2012},
   number={5},
   pages={521--573},
   issn={0007-4497},
   review={\MR{2944369}},
   doi={10.1016/j.bulsci.2011.12.004},
}

\bib{MR4736013}{article}{
 author={Dipierro, Serena},
   author={Perera, Kanishka},
   author={Sportelli, Caterina},
   author={Valdinoci, Enrico},
   title={An existence theory for superposition operators of mixed order
   subject to jumping nonlinearities},
   journal={Nonlinearity},
   volume={37},
   date={2024},
   number={5},
   pages={Paper No. 055018, 27},
   issn={0951-7715},
   review={\MR{4736013}},
}

\bib{DPSV-P}{article}{
title={An existence theory for nonlinear superposition operators of mixed fractional order},
author={Dipierro, Serena},
author={Perera, Kanishka},
author={Sportelli, Caterina},
author={Valdinoci, Enrico},
journal={Commun. Contemp. Math.  (2024)},
doi={10.1142/S0219199725500051}}

\bib{MR4821750}{article}{
   author={Dipierro, Serena},
   author={Proietti Lippi, Edoardo},
   author={Sportelli, Caterina},
   author={Valdinoci, Enrico},
   title={A general theory for the $(s, p)$-superposition of nonlinear
   fractional operators},
   journal={Nonlinear Anal. Real World Appl.},
   volume={82},
   date={2025},
   pages={Paper No. 104251, 24},
   issn={1468-1218},
   review={\MR{4821750}},
   doi={10.1016/j.nonrwa.2024.104251},
}

\bib{MR4438596}{article}{
   author={Dipierro, Serena},
   author={Proietti Lippi, Edoardo},
   author={Valdinoci, Enrico},
   title={Linear theory for a mixed operator with Neumann conditions},
   journal={Asymptot. Anal.},
   volume={128},
   date={2022},
   number={4},
   pages={571--594},
   issn={0921-7134},
   review={\MR{4438596}},
   doi={10.3233/asy-211718},
}

\bib{MR4651677}{article}{
   author={Dipierro, Serena},
   author={Proietti Lippi, Edoardo},
   author={Valdinoci, Enrico},
   title={(Non)local logistic equations with Neumann conditions},
   journal={Ann. Inst. H. Poincar\'{e} C Anal. Non Lin\'{e}aire},
   volume={40},
   date={2023},
   number={5},
   pages={1093--1166},
   issn={0294-1449},
   review={\MR{4651677}},
   doi={10.4171/aihpc/57},
}

\bib{MR3651008}{article}{
   author={Dipierro, Serena},
   author={Ros-Oton, Xavier},
   author={Valdinoci, Enrico},
   title={Nonlocal problems with Neumann boundary conditions},
   journal={Rev. Mat. Iberoam.},
   volume={33},
   date={2017},
   number={2},
   pages={377--416},
   issn={0213-2230},
   review={\MR{3651008}},
   doi={10.4171/RMI/942},
}

\bib{MR4249816}{article}{
   author={Dipierro, Serena},
   author={Valdinoci, Enrico},
   title={Description of an ecological niche for a mixed local/nonlocal
   dispersal: an evolution equation and a new Neumann condition arising from
   the superposition of Brownian and L\'{e}vy processes},
   journal={Phys. A},
   volume={575},
   date={2021},
   pages={Paper No. 126052, 20},
   issn={0378-4371},
   review={\MR{4249816}},
   doi={10.1016/j.physa.2021.126052},
}


\bib{MR3395749}{article}{
   author={Fall, Mouhamed Moustapha},
   author={Jarohs, Sven},
   title={Overdetermined problems with fractional Laplacian},
   journal={ESAIM Control Optim. Calc. Var.},
   volume={21},
   date={2015},
   number={4},
   pages={924--938},
   issn={1292-8119},
   review={\MR{3395749}},
   doi={10.1051/cocv/2014048},
}

\bib{MR4683738}{article}{
   author={Foghem, Guy},
   author={Kassmann, Moritz},
   title={A general framework for nonlocal Neumann problems},
   journal={Commun. Math. Sci.},
   volume={22},
   date={2024},
   number={1},
   pages={15--66},
   issn={1539-6746},
   review={\MR{4683738}},
}

\bib{MR3293447}{article}{
   author={Grubb, Gerd},
   title={Local and nonlocal boundary conditions for $\mu$-transmission and
   fractional elliptic pseudodifferential operators},
   journal={Anal. PDE},
   volume={7},
   date={2014},
   number={7},
   pages={1649--1682},
   issn={2157-5045},
   review={\MR{3293447}},
   doi={10.2140/apde.2014.7.1649},
}

\bib{MR3920522}{article}{
   author={Grubb, Gerd},
   title={Green's formula and a Dirichlet-to-Neumann operator for
   fractional-order pseudodifferential operators},
   journal={Comm. Partial Differential Equations},
   volume={43},
   date={2018},
   number={5},
   pages={750--789},
   issn={0360-5302},
   review={\MR{3920522}},
   doi={10.1080/03605302.2018.1475487},
}


\bib{MR2129093}{article}{
   author={Jakobsen, Espen R.},
   author={Karlsen, Kenneth H.},
   title={Continuous dependence estimates for viscosity solutions of
   integro-PDEs},
   journal={J. Differential Equations},
   volume={212},
   date={2005},
   number={2},
   pages={278--318},
   issn={0022-0396},
   review={\MR{2129093}},
   doi={10.1016/j.jde.2004.06.021},
}

\bib{MR3177650}{article}{
   author={Jarohs, Sven},
   author={Weth, Tobias},
   title={Asymptotic symmetry for a class of nonlinear fractional
   reaction-diffusion equations},
   journal={Discrete Contin. Dyn. Syst.},
   volume={34},
   date={2014},
   number={6},
   pages={2581--2615},
   issn={1078-0947},
   review={\MR{3177650}},
   doi={10.3934/dcds.2014.34.2581},
}

\bib{MR3012036}{book}{
   author={Jost, J\"{u}rgen},
   title={Partial differential equations},
   series={Graduate Texts in Mathematics},
   volume={214},
   edition={3},
   publisher={Springer, New York},
   date={2013},
   pages={xiv+410},
   isbn={978-1-4614-4808-2},
   isbn={978-1-4614-4809-9},
   review={\MR{3012036}},
   doi={10.1007/978-1-4614-4809-9},
}


\bib{MR3082317}{article}{
   author={Montefusco, Eugenio},
   author={Pellacci, Benedetta},
   author={Verzini, Gianmaria},
   title={Fractional diffusion with Neumann boundary conditions: the
   logistic equation},
   journal={Discrete Contin. Dyn. Syst. Ser. B},
   volume={18},
   date={2013},
   number={8},
   pages={2175--2202},
   issn={1531-3492},
   review={\MR{3082317}},
   doi={10.3934/dcdsb.2013.18.2175},
}

\bib{MPL22}{article}{
   author={Mugnai, Dimitri},
   author={Proietti Lippi, Edoardo},
   title={On mixed local-nonlocal operators with $(\alpha, \beta)$-Neumann
   conditions},
   journal={Rend. Circ. Mat. Palermo, II},
   volume={71},
   date={2022},
   number={3},
   pages={1035--1048},
   doi={10.1007/s12215-022-00755-6},
}

\bib{MR4793906}{article}{
   author={Perera, Kanishka},
   author={Sportelli, Caterina},
   title={A multiplicity result for critical elliptic problems involving
   differences of local and nonlocal operators},
   journal={Topol. Methods Nonlinear Anal.},
   volume={63},
   date={2024},
   number={2},
   pages={717--731},
   issn={1230-3429},
   review={\MR{4793906}},
}

\bib{MR3168912}{article}{
   author={Ros-Oton, Xavier},
   author={Serra, Joaquim},
   title={The Dirichlet problem for the fractional Laplacian: regularity up
   to the boundary},
   language={English, with English and French summaries},
   journal={J. Math. Pures Appl. (9)},
   volume={101},
   date={2014},
   number={3},
   pages={275--302},
   issn={0021-7824},
   review={\MR{3168912}},
   doi={10.1016/j.matpur.2013.06.003},
}

\bib{MR3211861}{article}{
   author={Ros-Oton, Xavier},
   author={Serra, Joaquim},
   title={The Pohozaev identity for the fractional Laplacian},
   journal={Arch. Ration. Mech. Anal.},
   volume={213},
   date={2014},
   number={2},
   pages={587--628},
   issn={0003-9527},
   review={\MR{3211861}},
   doi={10.1007/s00205-014-0740-2},
}


\bib{MR3937999}{article}{
   author={Soave, Nicola},
   author={Valdinoci, Enrico},
   title={Overdetermined problems for the fractional Laplacian in exterior
   and annular sets},
   journal={J. Anal. Math.},
   volume={137},
   date={2019},
   number={1},
   pages={101--134},
   issn={0021-7670},
   review={\MR{3937999}},
   doi={10.1007/s11854-018-0067-2},
}

\bib{MR2754080}{article}{
   author={Stinga, Pablo Ra\'{u}l},
   author={Torrea, Jos\'{e} Luis},
   title={Extension problem and Harnack's inequality for some fractional
   operators},
   journal={Comm. Partial Differential Equations},
   volume={35},
   date={2010},
   number={11},
   pages={2092--2122},
   issn={0360-5302},
   review={\MR{2754080}},
   doi={10.1080/03605301003735680},
}

\bib{MR3385190}{article}{
   author={Stinga, Pablo Ra\'{u}l},
   author={Volzone, Bruno},
   title={Fractional semilinear Neumann problems arising from a fractional
   Keller-Segel model},
   journal={Calc. Var. Partial Differential Equations},
   volume={54},
   date={2015},
   number={1},
   pages={1009--1042},
   issn={0944-2669},
   review={\MR{3385190}},
   doi={10.1007/s00526-014-0815-9},
}

\end{biblist}
\end{bibdiv}

\vfill

\end{document}